\tikzset{
  text style/.style={
    sloped, 
    text=black
  }}
\tikzset{join/.code=\tikzset{after node path={%
\ifx\tikzchainprevious\pgfutil@empty\else(\tikzchainprevious)%
edge[every join]#1(\tikzchaincurrent)\fi}}}
\tikzset{>=stealth',every on chain/.append style={join},
         every join/.style={->}}
\tikzstyle{labeled}=[execute at begin node=$\scriptstyle,
\newcommand*{\encircled}[1]{\relax\ifmmode\mathpalette\@encircled@math{#1}\else\@encircled{#1}\fi}
\newcommand*{\@encircled@math}[2]{\@encircled{$\m@th#1#2$}}
\newcommand*{\@encircled}[1]{%
  \tikz[baseline,anchor=base]{\node[draw,circle,outer sep=0pt,inner sep=.2ex] {#1};}}
\newcommand*{\defeq}{\mathrel{\vcenter{\baselineskip0.5ex \lineskiplimit0pt
            \hbox{\scriptsize.}\hbox{\scriptsize.}}}%
                     =}
\newcommand*{\sumcirc}{%
  \DOTSB
  \mathop{
    \mathchoice
      {\rlap{\kern.2em\rotatebox[origin=d]{-90}{$\scaleobj{.9}{\varbigcirc}$}}{\sum}}
      {\vcenter{\rlap{\kern.15em\rotatebox[origin=d]{180}{$\scriptscriptstyle\circlearrowleft$}}}{\sum}}
      {\sum}{\sum}
  }\slimits@
}
\DeclareMathOperator{\supp}{supp}
\newtheorem{theorem}{Theorem}[section]
\newtheorem{prop}[theorem]{Proposition}
\newtheorem{corollary}[theorem]{Corollary}
\theoremstyle{definition}
\newtheorem{defn}[theorem]{Definition}
\newtheorem{example}[theorem]{Example}
\newtheorem{rmk}[theorem]{Remark}
\numberwithin{equation}{section}
\begin{document}
\title{When do two networks have the same steady-state ideal? }

\author{
Mark Curiel \\
University of Hawai'i at M\={a}noa
\and 
Elizabeth Gross \\
University of Hawai'i at M\={a}noa
\and
Carlos Mu\~{n}oz \\
San Jos\'{e} State University}
\maketitle

\abstract{Chemical reaction networks are often used to model and understand biological processes such as cell signaling. Under the framework of chemical reaction network theory, a process is modeled with a directed graph and a choice of kinetics, which together give rise to a dynamical system.  Under the assumption of mass action kinetics, the dynamical system is polynomial. In this paper, we consider the ideals generated by the these polynomials, which are called steady-state ideals. Steady-state ideals appear in multiple contexts within
the chemical reaction network literature, however they have yet to be systematically studied. To begin such a study, we ask and partially answer the following question:  when do two reaction networks give rise to the same steady-state ideal? In particular, our main results describe three operations on the reaction graph that preserve the steady-state ideal. Furthermore, since  the motivation for this work is the classification of steady-state ideals, monomials play a primary role.  To this end, combinatorial conditions are given to identify monomials in a steady-state ideal, and we give a sufficient condition for a steady-state ideal to be monomial.}

\section{Introduction}

Under the assumption of mass-action kinetics, every chemical reaction network gives rise to a polynomial dynamical system that governs the rate of change of the participant chemical species concentrations. Such reaction networks have significance and applications in chemistry and biochemistry. For instance, reaction network models can be used to study the effect of a biological catalyst on the production or destruction of cells as in the Wnt signaling pathway model \cite{maclean2015parameter} \cite{GHRS}. Because of their impact in other fields, the mathematics of chemical reaction networks have been studied over the decades to better understand their behavior and their strengths and limitations in modeling real-world phenomena (see e.g. \cite{horn1972}, \cite{feinberg1972}).  In this paper, we will take an algebraic approach, along the lines of \cite{DickensteinSurvey}, and consider the polynomials defining these systems and their corresponding ideals. These ideals are known as the \emph{steady-state ideals}. 

Steady-state ideals are algebraic objects that appear in multiple contexts within the chemical reaction network literature. For example, elements of the steady-state ideal are relationships among species concentrations that must hold at steady-state, and these polynomials, or \emph{steady-state invariants}, can be used for model selection \cite{Gunawardena}\cite{HarringtonHo}\cite{modelselection}.  Such applications have motivated the study of algebraic operations on steady-state ideals, such as elimination \cite{Joining}. As another example, steady-state ideals can be used for the design of experiments as illustrated in \cite{maclean2015parameter}.  In \cite{maclean2015parameter}, the authors analyze the matroids defined by the associated primes of the steady-state ideal to determine optimal sets of species concentrations to measure.

Fundamentally, the steady-state ideal contains information about the system at steady-state. Given rate parameters and initial conditions, equilibria can be found by intersecting the variety defined by the steady-state ideal with the appropriate stochiometric compatability class,  an affine space governed by the reactions in the network and the initial conditions. Thus, information about the steady-state ideal can give us insight into the existence of positive steady-states or multistationarity.  For instance, the authors of \cite{MDSC} give a condition for multistationarity that holds when the steady-state ideal is generated by binomials.  Indeed, understanding the generators of the steady-state ideal can be quite informative. Information about the steady-states is also gained when the steady-state ideal has a  monomial generator. In particular, a monomial in the steady-state ideal signifies that all steady-states lie on the boundary of the positive orthant and, thus the system has no positive steady-states. 

There are several known results that implicitly or explicitly give information about the steady-state ideal from the combinatorics of the underlying reaction network. Returning to the example of binomial ideals, the authors of \cite{toricdynamicalsystems} show that if a reaction network is weakly reversible and has deficiency zero then its steady-state ideal is generated by binomials.   However, while steady-state ideals are of interest, there are still only a few works aimed at classifying steady-state ideals.  This paper aims to provide a basis for the classification problem by examining the  title question, \emph{When do two networks have the same steady-state ideal?}, for a specific class of networks that we call $0,1$-networks, i.e. networks whose complexes all lie in $\{0,1\}^{\mathcal S}$.  To this end, our main theorems give three ideal preserving network operations (Theorem \ref{addproduct}, Theorem \ref{addreactant}, and Theorem \ref{addreaction}).  Specifically, these network operations add species to reactant complexes, add species to product complexes, and add reactions--- and if certain combinatorial criteria are satisfied, then the steady-state ideals remain unchanged. Since  the motivation for this work is the classification of steady-state ideals, monomials play a primary role.  In particular, combinatorial conditions are given to identify monomials in a steady-state ideal (Proposition \ref{almostbalanced} and Proposition \ref{prodalmostbalanced}), and we give a sufficient condition for a steady-state ideal to be monomial (Theorem \ref{min_react_generate_ideal}).

This paper is organized as follows. Section 2 introduces the mathematics of chemical reaction networks. In particular, we describe how to obtain the steady state ideal from a reaction network under the assumption of mass action kinetics. Moreover, we introduce the reader to the relevance of the graphical structure of the network by highlighting the connection between the monomial support of the polynomials in the steady-state ideal and vertices of the reaction network with outgoing edges. In Section 3, we introduce almost balanced vertices. These vertices belong to the network hypergraph defined by the reaction network. A network hypergraph can be defined for any network, however, we detail the consequences of almost balanced vertices in a family of  \textit{0,1-networks}. Almost balanced vertices are related to the idea of balanced edges sets of hypergraphs that arise in algebraic statistics \cite{PS} \cite{gross2013combinatorial}. In this section, we show that almost balanced vertices are combinatorial signatures of monomials in the steady-state ideal.
In addition, we show that a hypergraph arsing from a weakly reversible network, that is, a network with a strongly connected underlying graph, cannot have almost balanced vertices. This is consistent with the literature as weak reversibility is a sufficient condition on networks with the capacity to exhibit positive steady states \cite{boros}. In Section 4, we discuss operations on reaction networks, and we introduce three operations on 0,1-networks that preserve the steady-state ideal, the main theorems in this paper. Finally, in the Conclusion, we discuss ideas for moving beyond networks with monomials in their steady-state ideal and 0,1-networks.

\section{Background}

\begin{defn} A \textbf{chemical reaction network} $N$ is a triple $(\mathscr{S},\mathscr{C},\mathscr{R})$ where $\mathscr{S} = \{s_1, \ldots, s_n\}$ is a set of chemical species, $\mathscr{C} = \{y_1, \ldots, y_m\} \subseteq \mathbb{Z}_{\geq0}^\mathscr{S}$ is a set of chemical complexes, and $\mathscr{R}$, a set of chemical reactions, is a relation on $\mathscr{C}$ denoted by $y_i \to y_j$. Moreover, we require that a chemical reaction network $N$ satisfies the following: $y_i \to y_i \not \in \mathscr{R}$ for all $y_i \in \mathscr{C}$; if $y_i \in \mathscr{C}$, then there is a $y_j \in \mathscr{C}$ such that either $y_i \to y_j \in \mathscr{R}$ or $y_j \to y_i \in \mathscr{R}$; and $\mathscr{S} = \bigcup_{y_i \in \mathscr{C}} \text{supp}(y_i)$ where $\supp(y_i)$ is the set $\{s_j \in \mathscr{S} \mid y_{i_j} \neq 0\}$.   Due to algebraic reasons, we will also assume that $\mathscr R$ does not contain any reactions of the form $\varnothing \to y$ (production), but we will allow reactions of the form $ y \to \varnothing$ (degradation).
\end{defn}

\begin{rmk}
Throughout this paper, complexes in a reaction will commonly be given the same index as the respective reaction, that is, if $y_i \to y_j$ is the $k$th reaction, then this reaction will be denoted as $y_k \to y_k'$ where $y_k = y_i$ and $y_k' = y_j$.
\end{rmk}

 To each complex $y_i = (y_{i_{s_1}}, y_{i_{s_2}}, \ldots, y_{i_{s_n}})\in \mathscr{C}$, we associate the monomial $x^{y_i} = \prod_{s_j \in \mathscr{S}} x_{s_j}^{y_{i_{s_j}}}$ where $x_{s_j}$ is the concentration of species $s_j$.  The \textit{law of mass action kinetics} states that the change in concentration of a species is proportional to the product of concentrations of reacting species. Thus, under this assumption, we obtain a system of differential equations given by:
\begin{equation}\label{eq:sseq}
    \dot{x}_{s_j} = \sum_{y_i \to y_i' \in \mathscr{R}} \kappa_{i} x^{y_i} (y_{i_{s_j}}' - y_{i_{s_j}}) \;,
\end{equation}
where $\kappa_{i}$ is the proportional rate constant for $y_i \to y_i'$. For us, we will view the differential equation $\dot{x}_{s}$ as a polynomial in the ring $\mathbb{K}(\bm{\kappa})[\bm{x}]$ and, for each $s \in \mathscr{S}$, we will call  $\dot{x}_s$ the \textit{steady-state polynomial} for $s$. Chemical reaction networks where $y_{i_s} \in \{0,1\}$ for all $i$ and $s$ will be called \textit{0,1-networks}.  The quantity $y_{i_s}' - y_{i_s}$ from Equation \ref{eq:sseq} represents the net change of species $s$ in the reaction $y_i \to y_i'$ and will be denoted $\gamma_{s_i}$.

\begin{example}
Let $N$ be the  chemical reaction network in Figure \ref{examplenetwork} with three complexes.  There are two species $\mathscr S =\{A, B\}$ and three complexes $2A$, $2B$, and $A+B$ that we view as vectors in $\mathbb Z_{\geq 0}^{\mathscr S}$, i.e. $\mathscr C = \{ (2,0), (0,2), (1,1) \}$.  This network is not a 0,1-network since not all vectors in $\mathscr C$ are 0,1 vectors.  The three reactions are represented as edges in the directed graph shown.  The mass action system is shown below to the right of the network.
\begin{figure}[H]
    \centering
    \begin{tikzcd}[/tikz/column 1/.append style={anchor=base east}
    ,/tikz/column 2/.append style={anchor=base west}
    ]
    2A \ar[rd, "\kappa_{1}"]  \ar[rr, bend left = 30, "\kappa_{2}"] & & 2B & \dot{x}_A = -\kappa_{1}x_A^2 - 2\kappa_{2}x_A^2 - \kappa_{3}x_Ax_B\\
    & A + B \ar[ur ,"\kappa_{3}"] &&\hspace{-0.25cm} \dot{x}_B = \kappa_{1}x_A^2 + 2\kappa_{2}x_A^2 + \kappa_{3}x_Ax_B
    \end{tikzcd}
    \caption{Chemical reaction network with two species and the associated polynomial dynamical system.}
    \label{examplenetwork}
\end{figure}
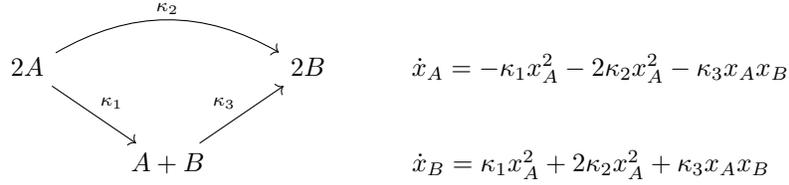
\end{example}

\begin{defn}
Given a chemical reaction network $N = (\mathscr{S},\mathscr{C},\mathscr{R})$, the \textbf{steady-state ideal} $\mathcal{I}(N) \subseteq \mathbb{K}(\bm{\kappa})[ {\bf x}]$ is the ideal $\langle \dot{x}_s : s \in \mathscr{S} \rangle$.
\end{defn}

\begin{example} The steady state ideal of the network in Figure \ref{examplenetwork} is
\begin{align*}
    \mathcal{I}(N) 
    &= \langle \dot{x}_A, \dot{x}_B \rangle \\
    &= \langle -\kappa_{1}x_A^2 - 2\kappa_{2}x_A^2 - \kappa_{3}x_Ax_B, \kappa_{12}x_A^2 + 2\kappa_{13}x_A^2 + \kappa_{23}x_Ax_B \rangle \\
    &= \langle \kappa_{1}x_A^2 + 2\kappa_{2}x_A^2 + \kappa_{3}x_Ax_B \rangle.
\end{align*}
\end{example}

As the next example shows the steady-state ideals of two different networks can be generated by the same set of polynomials.  

\begin{example} 
Consider the following two 0,1 reaction networks on the same set of species
\begin{multicols}{2}
    \centering
    $N_1$ :
    \begin{tikzcd}[row sep = tiny]
    A  \ar[r,"\kappa_1"] & B\\
    C \ar[r,"\kappa_2"] & D \\
    C \ar[r,"\kappa_3"] & B \\
    \end{tikzcd}
    
    $N_2$ :
    \begin{tikzcd}[row sep = tiny]
    A \ar[r,"\kappa_1"] & B + C\\
    C \ar[r,"\kappa_2"] & B + D\\
    A \ar[r,"\kappa_3"] & C.
    \end{tikzcd}
\end{multicols}
The steady state ideal for $N_1$ is 
$\mathcal{I}(N_1) = \langle -\kappa_1  x_A, \kappa_1 x_A+\kappa_3x_C, -\kappa_2 x_C-\kappa_3x_C, \kappa_2 x_C \rangle =$ $\langle x_A, x_C \rangle \subseteq $ $\mathbb K( \kappa_1, \kappa_2, \kappa_3)[x_A, x_B, x_C, x_D]$ while the steady state ideal for $N_2 $ is $\mathcal{I}(N_2) = \langle -\kappa_1  x_A - \kappa_3 x_A,  \kappa_1 x_A + \kappa_2 x_C, \kappa_1 x_A - \kappa_2 x_C - \kappa_3x_C, \kappa_2 x_C \rangle\subseteq $ $\mathbb K( \kappa_1, \kappa_2, \kappa_3)[x_A, x_B, x_C, x_D]$.  Thus, the steady state ideals for $N_1$ and $N_2$ are equal.


Notice that the pair of networks $N_1$ and $N_2$ have the same species sets. A pair of networks can have different species sets but still have equal steady state ideals if we consider both as ideals of a large enough ring. For instance, consider the following network
\[N_3:
\begin{tikzcd}[row sep = tiny]
A \ar[r,"\kappa_1"] & B + E\\
C \ar[r,"\kappa_2"] & D + F\\
A+E \ar[r,"\kappa_3"] & F.
\end{tikzcd}
\]
The steady-state ideal for $N_3$ is $\mathcal{I}(N_3) = \langle x_A,x_C \rangle \subseteq  \mathbb K( \kappa_1, \kappa_2, \kappa_3, )[x_A, x_B, x_C, x_D, x_E, x_F]$ and thus has the same generating set as the steady-state ideals corresponding to networks $N_1$ and $N_2$. If we consider the ideals $\mathcal{I}(N_1)$ and $\mathcal{I}(N_2)$ as ideals in $  \mathbb K( \kappa_1, \kappa_2, \kappa_3, )[x_A, x_B, x_C, x_D, x_E, x_F]$, then we have  $\mathcal{I}(N_3) = \langle x_A, x_C \rangle = \mathcal{I}(N_1) = \mathcal{I}(N_2)$.  In general, when comparing two steady-state ideals, we will consider them both as living in the same ring, in other words, we will call two steady-state ideals equal if they can both be generated by the same set of polynomials (see Remark \ref{rmk:equality}). An example of networks with the same steady state ideal under this definition are dynamically equivalent systems \cite{CJY}.

\end{example}

The following observation will be helpful in our exploration of steady-state ideals. The proof of Proposition 2.7 is straightforward, and we include it here as a way to introduce how we will reason about steady-state polynomials throughout the paper.

\begin{prop}\label{finestideal}
Reactants generate the support of all the polynomials in the steady-state ideal, i.e.

$$\mathcal{I}(N) \subseteq \langle x^{y_i} \ : \ y_i \to y_i' \in \mathscr{R} \rangle.$$
Moreover, if $\mathcal{I}(N)$ is monomial, then $\mathcal{I}(N) = \langle x^{y_i} \ : \ y_i \to y_i' \in \mathscr{R} \rangle$.
\end{prop}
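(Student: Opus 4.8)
The plan is to establish the two inclusions separately. The first is a direct consequence of the shape of the steady-state polynomials, while the reverse inclusion (under the monomiality hypothesis) relies on the defining property of monomial ideals together with the algebraic independence of the rate constants.

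For $\mathcal{I}(N) \subseteq \langle x^{y_i} : y_i \to y_i' \in \mathscr{R}\rangle$, I would simply read off Equation~\eqref{eq:sseq}: each generator decomposes as
\[
\dot{x}_s \;=\; \sum_{y_i \to y_i' \in \mathscr{R}} \bigl(\kappa_i\,\gamma_{s_i}\bigr)\,x^{y_i},
\]
a $\mathbb{K}(\bm\kappa)$-linear combination of the reactant monomials $x^{y_i}$. Hence every generator of $\mathcal{I}(N)$ lies in the monomial ideal $\langle x^{y_i}\rangle$, and so does the whole ideal.

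For the reverse inclusion, assume $\mathcal{I}(N)$ is monomial. I would invoke the standard fact that a polynomial lies in a monomial ideal if and only if each of its terms does; applied to $\dot{x}_s \in \mathcal{I}(N)$, this says every $x$-monomial occurring in $\dot{x}_s$ with nonzero coefficient already belongs to $\mathcal{I}(N)$. It therefore suffices to exhibit, for each reactant complex $y$, a species $s$ such that $x^{y}$ has nonzero coefficient in $\dot{x}_s$.

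The step I expect to be the crux is controlling this coefficient, since distinct reactions may share the reactant $y$, and the coefficient of $x^{y}$ in $\dot{x}_s$ is then the sum $\sum_{k:\,y_k=y}\kappa_k\,\gamma_{s_k}$, which could conceivably cancel. I would resolve this using two features of the setup: the ban on self-loops forces any single reaction $y_{k_0}\to y_{k_0}'$ with reactant $y$ to satisfy $y_{k_0}\neq y_{k_0}'$, so $\gamma_{s_{k_0}}\neq 0$ for some species $s$; and the rate constants are algebraically independent indeterminates in $\mathbb{K}(\bm\kappa)$, so the term $\kappa_{k_0}\gamma_{s_{k_0}}$ cannot be annihilated by the other summands. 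Thus for this $s$ the coefficient of $x^{y}$ is a nonzero element of $\mathbb{K}(\bm\kappa)$, whence $x^{y}\in\mathcal{I}(N)$. Letting $y$ range over all reactant complexes gives $\langle x^{y_i}\rangle\subseteq\mathcal{I}(N)$, and combined with the first part, equality.
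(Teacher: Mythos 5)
Your proof is correct and follows essentially the same route as the paper: the first inclusion by expanding the generators $\dot{x}_s$ as $\mathbb{K}(\bm{\kappa})$-linear combinations of the reactant monomials, and the reverse inclusion by the term-by-term characterization of monomial ideals together with the no-self-loop condition forcing $\gamma_{s_i}\neq 0$ for some species $s$. In fact you treat one point more carefully than the paper does: when several reactions share the reactant $y$, the coefficient of $x^{y}$ in $\dot{x}_s$ is the sum $\sum_{k:\,y_k=y}\kappa_k\gamma_{s_k}$ rather than a single term $\kappa_i\gamma_{s_i}$, and your appeal to the algebraic independence of the $\kappa_k$ in $\mathbb{K}(\bm{\kappa})$ is exactly what is needed to rule out cancellation in that sum.
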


\begin{proof} 
Let $f$ $\in \mathcal{I}(N)$. We know that $\mathcal{I}(N)$ is generated by each $\dot{x}_s$, where $s \in \mathscr{S}$. This means 
$$ f = \sum_{s \in \mathscr{S}} g_{s}  \dot{x}_{s}$$
where $g_{s} \in \mathbb K(\bm{\kappa})[{\bf x}]$.
Recall that $\dot{x}_s = \sum_{y_i \to y_i' \in \mathscr{R}} \gamma_{s_i} \kappa_{i} x^{y_i}$ Therefore, we can write $f$ in terms of $\sum_{y_i \to y_i' \in \mathscr{R}} \gamma_{s_i} \kappa_{i} x^{y_i}$. In particular,
$$ f = \sum_{s \in \mathscr{S}} g_{s} \left(\sum_{y_i \to y_i' \in \mathscr{R}} \gamma_{s_i} \kappa_{i} x^{y_i} \right).$$
Thus, after expanding, we can see every monomial in the expansion of $f$ is divisible by $x^{y_i}$ for some $i$ such that $y_i \to y_i' \in \mathscr{R}$. It remains to show that if $\mathcal{I}(N)$ is monomial, then $\mathcal{I}(N) = \langle x^{y_i} \ : \ y_i \to y_i' \in \mathscr{R} \rangle$.

Assume $\mathcal{I}(N)$ is monomial. We will show that $x^{y_i} \in \mathcal{I}(N)$ for each $y_i \in \mathscr{C}$ such that $y_i \to y_i' \in \mathscr{R}$. Consider the following equivalent notion of a monomial ideal, stated here in terms of the steady state ideal: for any polynomial $f$ belonging to $\mathcal{I}(N)$, each nonzero term of $f$ must also be an element of $\mathcal{I}(N)$. In particular, the nonzero terms of the generators $\dot{x}_s$ of $\mathcal{I}(N)$ are elements of $\mathcal{I}(N)$. Since the terms that make up $\dot{x}_s$ are real multiples of $x^{y_i}$ where $y_i \in \mathscr{C}$ such that $y_i \to y_i' \in \mathscr{R}$, then we need only show that the real multiples are nonzero. Recall, the coefficient of $x^{y_i}$ in $\dot{x}_s$ is $\gamma_{s_i}\kappa_i$. Since $\kappa_i$ is nonzero by definition, we will show $\gamma_{s_i}$ is nonzero. 

By the definition of a chemical reaction network, the reaction $y_i \to y_i$ is not in $\mathscr{R}$ for any $y_i \in \mathscr{C}$. Hence, for each reaction $y_i \to y_i' \in \mathscr{R}$, there is a species $s \in \supp{(y_i'-y_i)}$, since $y_i'-y_i$ is not the zero vector. Necessarily, $\gamma_{s_i}$ is nonzero and so the coefficient of $x^{y_i}$ in $\dot{x}_s$ is nonzero. Thus, since $\mathcal{I}(N)$ is monomial, then $x^{y_i} \in \mathcal{I}(N)$.
\end{proof}

\section{Combinatorics of Chemical Reaction Networks}\label{Combinatorics_of_CRNs}

We begin this section by defining the main combinatorial object that we will study in the paper: \emph{the network hypergraph}.

\begin{defn} Let $N = (\mathscr{S},\mathscr{C},\mathscr{R})$ be a chemical reaction network. For each $y_i \to y_i' \in \mathscr{R}$, let $u_{i} = y_i$ and $v_{i} = y_i'$. The \textit{network hypergraph} $\mathcal{H}_N$ for $N$ has vertex set $V = \{u_{i},v_{i} : y_i \to y_i' \in \mathscr{R}\}$ and edge set $E$ containing the hyperedges $E_s = \bigcup_{y_i \to y_i'} \{u_{i} : s \in \mathrm{supp}(y_i)\} \cup \{v_{i} : s \in \mathrm{supp}(y_i')\}$ for each $s \in \mathscr{S}$ and the hyperedges $E_i = \{u_{i},v_{i}\}$ for each $y_i \to y_i' \in \mathscr{R}$ if $y_i' \neq \varnothing$, else $E_{i} = \varnothing$ if $y_i' = \varnothing$.
\end{defn}

The hyperedge $E_s$ is called the \textit{species hyperedge} for $s \in \mathscr{S}$. The hyperedge $E_i$ is called the \textit{reaction hyperedge} for the $i$th reaction $y_i \to y_i' \in \mathscr{R}$. Note that, for 0,1-networks, we get the following equation:
\begin{equation}\label{speciesedgeequ}
    \dot{x}_s = \sum_{v_i \in E_s} \kappa_ix^{y_i} - \sum_{u_i \in E_s} \kappa_ix^{y_i}
\end{equation}

\begin{example} The network hypergraph of the network in Figure \ref{examplenetwork} is
\[
\begin{tikzpicture}[scale = .6]
    \node (A1) at (-2,2) {};
    \node (A2) at (-2,0) {};
    \node (AB2) at (-2,-2) {};
    \node (AB1) at (2,2) {};
    \node (B1) at (2,0) {};
    \node (B2) at (2,-2) {};
    
    \begin{scope}[fill opacity=0.5]
    \filldraw[fill=blue!50] ($(AB1) + (0,.5)$)
        to[out=0,in=90] ($(AB1) + (.5,0)$)
        to[out=270,in=90] ($(A2) + (.5,0)$)
        to[out=270,in=90] ($(AB2) + (.5,0)$)
        to[out=270,in=0] ($(AB2) + (0,-.5)$)
        to[out=180,in=270] ($(AB2) + (-.5,0)$)
        to[out=90,in=270] ($(A1) + (-.5,0)$)
        to[out=90,in=180] ($(A1) + (0,.5)$)
        to[out=0,in=180] ($(AB1) + (0,.5)$);
    \filldraw[fill=green!75] ($(AB1) + (0,.75)$)
        to[out=0,in=90] ($(AB1) + (.75,0)$)
        to[out=270,in=90] ($(B2) + (.5,0)$)
        to[out=270,in=0] ($(B2) + (0,-.5)$)
        to[out=180,in=0] ($(AB2) + (0,-.75)$)
        to[out=180,in=270] ($(AB2) + (-.75,0)$)
        to[out=90,in=180] ($(AB2) + (0,.75)$)
        to[out=0,in=270] ($(B1) + (-.5,0)$)
        to[out=90,in=270] ($(AB1) + (-.75,0)$)
        to[out=90,in=180] ($(AB1) + (0,.75)$);
    \filldraw[fill=red!75] ($(A1) + (0,0.25)$)
        to[out=0,in=180] ($(AB1) + (0,0.25)$)
        to[out=0,in=90] ($(AB1) + (0.25,0)$)
        to[out=270,in=0] ($(AB1) + (0,-0.25)$)
        to[out=180,in=0] ($(A1) + (0,-0.25)$)
        to[out=180,in=270] ($(A1) + (-0.25,0)$)
        to[out=90,in=180] ($(A1) + (0,0.25)$);
    \filldraw[fill=yellow] ($(A2) + (0,0.25)$)
        to[out=0,in=180] ($(B1) + (0,0.25)$)
        to[out=0,in=90] ($(B1) + (0.25,0)$)
        to[out=270,in=0] ($(B1) + (0,-0.25)$)
        to[out=180,in=0] ($(A2) + (0,-0.25)$)
        to[out=180,in=270] ($(A2) + (-0.25,0)$)
        to[out=90,in=180] ($(A2) + (0,0.25)$);
    \filldraw[fill=orange] ($(AB2) + (0,0.25)$)
        to[out=0,in=180] ($(B2) + (0,0.25)$)
        to[out=0,in=90] ($(B2) + (0.25,0)$)
        to[out=270,in=0] ($(B2) + (0,-0.25)$)
        to[out=180,in=0] ($(AB2) + (0,-0.25)$)
        to[out=180,in=270] ($(AB2) + (-0.25,0)$)
        to[out=90,in=180] ($(AB2) + (0,0.25)$);
    \end{scope}
    
    \fill (A1) circle (0.1);
    \fill (A2) circle (0.1);
    \fill (AB1) circle (0.1);
    \fill (AB2) circle (0.1);
    \fill (B1) circle (0.1);
    \fill (B2) circle (0.1);
    
    \fill (-2.75,2) node [left] {$u_1=2A$};
    \fill (-2.75,0) node [left] {$u_2=2A$};
    \fill (2.75,2) node [right] {$v_1=A+B$};
    \fill (-2.75,-2) node [left] {$u_3=A+B$};
    \fill (2.75,0) node [right] {$v_2=2B$};
    \fill (2.75,-2) node [right] {$v_3=2B$};
    \fill (-1.2,1.2) node {$\scaleobj{1}{E_{A}}$};
    \fill (1.3,-1.3) node {$\scaleobj{1}{E_{B}}$};
\end{tikzpicture}
\]
The network hypergraph of the 0,1-network 
$\begin{tikzcd}[every arrow/.append style={shift left}]
A \ar[r,"\kappa_1"] & B \ar[l,"\kappa_2"]    
\end{tikzcd}$ is the following
\[
\begin{tikzpicture}[scale = .6]
    \node (A1) at (-2,2) {};
    \node (A2) at (-2,-1) {};
    \node (B1) at (2,2) {};
    \node (B2) at (2,-1) {};
    
    \begin{scope}[fill opacity=0.5]
    \filldraw[fill=blue!50] ($(A1) + (0,.5)$) 
        to[out=0,in=90] ($(A1) + (.5,0)$)
        to[out=270,in=90] ($(A2) + (.5,0)$)
        to[out=270,in=0] ($(A2) + (0,-.5)$)
        to[out=180,in=270] ($(A2) + (-.5,0)$)
        to[out=90,in=270] ($(A1) + (-.5,0)$)
        to[out=90,in=180] ($(A1) + (0,.5)$)
        to[out=0,in=90] ($(A1) + (.5,0)$);
    \filldraw[fill=yellow] ($(B1) + (0,.5)$) 
        to[out=0,in=90] ($(B1) + (.5,0)$)
        to[out=270,in=90] ($(B2) + (.5,0)$)
        to[out=270,in=0] ($(B2) + (0,-.5)$)
        to[out=180,in=270] ($(B2) + (-.5,0)$)
        to[out=90,in=270] ($(B1) + (-.5,0)$)
        to[out=90,in=180] ($(B1) + (0,.5)$)
        to[out=0,in=90] ($(B1) + (.5,0)$);
    \filldraw[fill=orange] ($(A1) + (0,0.25)$)
        to[out=0,in=180] ($(B1) + (0,0.25)$)
        to[out=0,in=90] ($(B1) + (0.25,0)$)
        to[out=270,in=0] ($(B1) + (0,-0.25)$)
        to[out=180,in=0] ($(A1) + (0,-0.25)$)
        to[out=180,in=270] ($(A1) + (-0.25,0)$)
        to[out=90,in=180] ($(A1) + (0,0.25)$);
    \filldraw[fill=pink] ($(A2) + (0,0.25)$)
        to[out=0,in=180] ($(B2) + (0,0.25)$)
        to[out=0,in=90] ($(B2) + (0.25,0)$)
        to[out=270,in=0] ($(B2) + (0,-0.25)$)
        to[out=180,in=0] ($(A2) + (0,-0.25)$)
        to[out=180,in=270] ($(A2) + (-0.25,0)$)
        to[out=90,in=180] ($(A2) + (0,0.25)$);
    \end{scope}
    
    \fill (A1) circle (0.1);
    \fill (A2) circle (0.1);
    \fill (B1) circle (0.1);
    \fill (B2) circle (0.1);
    
    \fill ($(A1)+(-.75,0)$) node [left] {$u_1=A$};
    \fill ($(A2)+(-.75,0)$) node [left] {$v_2=A$};
    \fill ($(B1)+(.75,0)$) node [right] {$v_1=B$};
    \fill ($(B2)+(.75,0)$) node [right] {$u_2=B$};
    \fill ($(A2) + (0,1.5)$) node {$E_A$};
    \fill ($(B2) + (0,1.5)$) node {$E_B$};
\end{tikzpicture}
\]
\end{example}

For a majority of what follows, we will use multisets to capture combinatorial information from a network hypergraph. Formally, a multiset if defined as a pair $\mathscr{E} = (A,\mu)$ where $A$ is the underlying set for $\mathscr{E}$ and $\mu$ is a map from $A$ to $\mathbb{Z}_{\geq0}$ that outputs the multiplicity $\mu(a)$ for each element $a \in A$ as an element of $\mathscr{E}$. Given a hypergraph $\mathcal{H} = (V,E)$, a multiset $\mathscr{E}$ whose underlying set is $E$ is said to be a \textit{multiset over the edges in $E$}. When an edge $E_i \in E$ has nonnegative multiplicity $\mu(E_i) = \mu_i$ in $\mathscr{E}$, we write $\mathscr{E} = \left\{E_1^{(\mu_1)}, \ldots , E_m^{(\mu_m)}\right\}$. If $\mathscr{A} = \left\{E_1^{(\mu_1)}, \ldots , E_m^{(\mu_m)}\right\}$ and $\mathscr{B} = \left\{E_1^{(\omega_1)}, \ldots , E_m^{(\omega_m)}\right\}$ are multisets over the edges in $E$, the multiset union of $\mathscr{A}$ and $\mathscr{B}$ is defined as $\mathscr{A} \sqcup \mathscr{B} = \left\{E_1^{(\mu_1+\omega_1)}, \ldots, E_m^{(\mu_m+\omega_m)}\right\}$. 

Let $\mathcal{H}_N = (V,E)$ be a network hypergraph. Consider the colors $c_1, \ldots, c_k$ and a multiset $\mathscr{E}$ over the edges in $E$. A \textit{k-coloring} of $\mathscr{E}$ is an assignment of the edges in $\mathscr{E}$ into $k$ (possibly empty) submultisets. We write $\mathscr{E} = \mathscr{E}_{c_1} \sqcup \cdots \sqcup \mathscr{E}_{c_k}$ and think of the edges in $\mathscr{E}_{c_i}$ as being colored $c_i$. The \textit{degree} of a vertex $v \in V$ with respect to $c_i$ is the number of edges in $\mathscr{E}_{c_i}$ that cover $v$ and will be denoted $\deg_{\mathscr{E}_{c_i}}(v)$. Every multiset $\mathscr{E}$ has many $k$-colorings, however, we are interested in 2-colorings that satisfy a particular property. When $\mathscr{E}$ has a 2-coloring, we take $c_1 = r$ to denote the color red and $c_2 = b$ to denote the color blue. 

\begin{defn}
Let $\mathcal{H} = (V,E)$ be a hypergraph. Suppose $\mathscr{E}$ is a multiset over the edges in $E$ and let $v \in V$.  We say that the vertex $v 
\in V$ is \textit{almost balanced with respect to the 2-coloring} $\mathscr{E} = \mathscr{E}_r \sqcup \mathscr{E}_b$ if: 
\begin{enumerate}
    \item $\deg_{\mathscr{E}_r}(v) = \deg_{\mathscr{E}_b}(v) + k$ for some $k \in \mathbb{Z}_{>0}$
    \item $\deg_{\mathscr{E}_r}(u) = \deg_{\mathscr{E}_b}(u)$ \; $\forall$ $u \in V \setminus \{v\}$.
\end{enumerate}
\end{defn}
In some cases, we omit the specification of the multiset $\mathscr{E}$ and simply say $v$ is almost balanced if there is some $\mathscr{E}$ such that $v$ is almost balanced with respect to a 2-coloring of $\mathscr{E}$.  If a vertex $v$ is almost balanced with respect to the multiset of edges $\mathscr{E}$, we say $\mathscr{E}$ is \textit{minimal} if it does not contain another multiset $\mathscr{E}'$ such that $v$ is almost balanced with respect to $\mathscr{E}'$. The definition of almost balanced is inspired by monomial hypergraphs and balanced edge sets as described in  \cite{PS} and \cite{gross2013combinatorial}.

\begin{prop}\label{almostbalanced} Let $N$ be a 0,1-network, $\mathscr{E}$ be a multiset over the edges in $E(\mathcal{H}_N)$, and $u_j \in V(\mathcal{H}_N)$. If $u_{j}$ is almost balanced with respect to the 2-coloring $\mathscr{E}=\mathscr{E}_r \sqcup \mathscr{E}_b$, then $k\kappa_{j}x^{y_{j}} = \sum_{E_s \in \mathcal{E}_b} \dot{x}_s - \sum_{E_s \in \mathcal{E}_r} \dot{x}_s$ for some positive integer $k$.
\end{prop}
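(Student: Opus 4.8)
The plan is to prove the identity by direct expansion of the right-hand side, using the $0,1$-network formula \eqref{speciesedgeequ} for each $\dot{x}_s$ and then reorganizing the resulting double sum by reaction. Write $b_s$ and $r_s$ for the multiplicities of the species hyperedge $E_s$ in $\mathscr{E}_b$ and $\mathscr{E}_r$ respectively; since the reaction hyperedges $E_i$ carry no polynomial $\dot{x}$ and do not appear in the sums of the statement, only the species hyperedges are relevant, and the two sums combine into $\sum_{E_s \in \mathscr{E}_b}\dot{x}_s - \sum_{E_s\in\mathscr{E}_r}\dot{x}_s = \sum_{s\in\mathscr{S}}(b_s - r_s)\,\dot{x}_s$.

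First I would substitute \eqref{speciesedgeequ}, namely $\dot{x}_s = \sum_{v_i\in E_s}\kappa_i x^{y_i} - \sum_{u_i\in E_s}\kappa_i x^{y_i}$, into this expression and interchange the order of summation so as to collect, for each reaction $y_i\to y_i'\in\mathscr{R}$, the total coefficient of the monomial $\kappa_i x^{y_i}$. Using that $u_i\in E_s$ exactly when $s\in\supp(y_i)$ and $v_i\in E_s$ exactly when $s\in\supp(y_i')$, this coefficient equals
\[
\sum_{s\in\supp(y_i')}(b_s-r_s)\;-\;\sum_{s\in\supp(y_i)}(b_s-r_s).
\]

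The key step is to recognize each inner sum as a signed degree. For any vertex $w$, the number of blue (resp.\ red) species hyperedges covering $w$ is $\deg_{\mathscr{E}_b}(w)=\sum_{s:\,w\in E_s} b_s$ (resp.\ the same with $r_s$), so the displayed coefficient is exactly $\bigl(\deg_{\mathscr{E}_b}(v_i)-\deg_{\mathscr{E}_r}(v_i)\bigr)-\bigl(\deg_{\mathscr{E}_b}(u_i)-\deg_{\mathscr{E}_r}(u_i)\bigr)$. Now I invoke the almost-balanced hypothesis: at $u_j$ we have $\deg_{\mathscr{E}_b}(u_j)-\deg_{\mathscr{E}_r}(u_j)=-k$, while the signed degree vanishes at every other vertex. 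Since by construction the reactant vertex $u_j$ is a distinct vertex from every product vertex $v_i$ and from every reactant vertex $u_i$ with $i\ne j$, the coefficient above is $0$ for every $i\ne j$ and equals $0-(-k)=k$ for $i=j$. Hence the whole expression collapses to $k\kappa_j x^{y_j}$, as claimed.

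I expect the main obstacle to be the bookkeeping around species that are simultaneously reactant and product of a reaction — i.e.\ $s\in\supp(y_i)\cap\supp(y_i')$, which place both $u_i$ and $v_i$ in $E_s$ and cancel in $\dot{x}_s$ — together with justifying that the degree conditions are to be read over species hyperedges only, the reaction hyperedges being invisible to the target sums. Care is also needed to confirm that the special vertex $u_j$ coincides with no $v_i$ and no $u_i$ ($i\ne j$), so that the vanishing of the signed degree off $u_j$ genuinely eliminates all non-$j$ contributions; the indexing convention $V=\{u_i,v_i\}$ and the no-self-loop axiom are exactly what secure this.
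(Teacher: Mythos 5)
Your proposal is correct and follows essentially the same route as the paper's proof: expand the right-hand side via Equation \eqref{speciesedgeequ}, interchange the order of summation, and identify the coefficient of each $\kappa_i x^{y_i}$ with the signed degree combination $\bigl(\deg_{\mathscr{E}_b}(v_i)-\deg_{\mathscr{E}_r}(v_i)\bigr)-\bigl(\deg_{\mathscr{E}_b}(u_i)-\deg_{\mathscr{E}_r}(u_i)\bigr)$, which the almost-balanced hypothesis forces to equal $k$ for $i=j$ and $0$ otherwise (your use of the signed multiplicities $b_s-r_s$ even spares you the paper's reduction to a minimal $\mathscr{E}$). The one point you flag---that $\deg_{\mathscr{E}_c}(w)$ counts reaction hyperedges as well as species hyperedges, so your identity $\deg_{\mathscr{E}_b}(w)=\sum_{s:\,w\in E_s}b_s$ is not literally true---is resolved exactly as in the paper's computation: the only reaction hyperedge covering $u_i$ or $v_i$ is $E_i$ itself, which covers both vertices with the same multiplicity in each colour class and therefore cancels from the displayed combination, so the full signed degrees and the species-only signed degrees agree there.
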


\begin{proof}
Let $N = (\mathscr{S}, \mathscr{C}, \mathscr{R})$ be a 0,1-network and suppose $u_j$ is almost balanced with respect to the 2-coloring $\mathscr{E}=\mathscr{E}_r \sqcup \mathscr{E}_b$. By the definition of almost balanced, $\deg_{\mathscr{E}_r}(u_j) = \deg_{\mathscr{E}_b}(u_j) + k$ for some positive integer $k$ and $\deg_{\mathscr{E}_r}(w) = \deg_{\mathscr{E}_b}(w)$ for all $w \in V \setminus \{u_j\}$. Without loss of generality, suppose $\mathscr{E}$ is minimal. Then, for any edge $e \in \mathscr{E}$, either $e \in \mathscr{E}_r$ or $e \in \mathscr{E}_b$ but not both; otherwise, there is an edge $e' \in \mathscr{E}_r \sqcap \mathscr{E}_b$ and, consequently, $u_j$ is almost balanced with respect to the 2-coloring $\mathscr{E}' = (\mathscr{E}_r \setminus \{e'\}) \sqcup (\mathscr{E}_b \setminus \{e'\})$, contradicting the minimality of $\mathscr{E}$. In particular, each species edge $E_s \in \mathscr{E}$ is colored either red or blue but not both. Thus, the sets $\mathscr{S}_r = \{s \in \mathscr{S} : E_s \in \mathscr{E}_r\}$ and $\mathscr{S}_b = \{s \in \mathscr{S} : E_s \in \mathscr{E}_b\}$ are disjoint. We shall prove the following two claims:
\begin{enumerate}
    \item $\displaystyle\sum_{s \in \mathscr{S}_b} m_s\gamma_{s_j} - \sum_{s \in \mathscr{S}_r} m_s\gamma_{s_j} = k$
    \item $\displaystyle\sum_{s \in \mathscr{S}_b} m_s\gamma_{s_i} - \sum_{s \in \mathscr{S}_r} m_s\gamma_{s_i} = 0$ for all $i \neq j$,
\end{enumerate}
where $m_s$ denotes the multiplicity of the edge $E_s$ in $\mathscr{E}$. 

For $c \in \{r,b\}$, the sum $\sum_{s \in \mathscr{S}_c} m_s\gamma_{s_i}$ can be partitioned into four sums, one for each of the following situations for a given species $s$: 1) $u_{i} \in E_s$ and $v_{i} \not \in E_s$, 2) $u_{i} \not\in E_s$ and $v_{i} \in E_s$, 3) $u_{i},v_{i} \in E_s$, and 4) $u_{i},v_{i} \not\in E_s$. Then, for each $i$,
\begin{align*}
    \displaystyle\sum_{s \in \mathscr{S}_b} m_s\gamma_{s_i} - \sum_{s \in \mathscr{S}_r} m_s\gamma_{s_i}
    &= \sum_{\substack{s \in \mathscr{S}_b\\ u_{i} \in E_s\\ v_{i} \not \in E_s}} m_s\gamma_{s_i} + \sum_{\substack{s \in \mathscr{S}_b\\ u_{i} \not\in E_s\\ v_{i} \in E_s}} m_s\gamma_{s_i} + \sum_{\substack{s \in \mathscr{S}_b\\ u_{i} \in E_s\\ v_{i} \in E_s}} m_s\gamma_{s_i} + \sum_{\substack{s \in \mathscr{S}_b\\ u_{i} \not\in E_s\\ v_{i} \not\in E_s}} m_s\gamma_{s_i} \\
    &- \sum_{\substack{s \in \mathscr{S}_r\\ u_{i} \in E_s\\ v_{i} \not \in E_s}} m_s\gamma_{s_i} - \sum_{\substack{s \in \mathscr{S}_r\\ u_{i} \not\in E_s\\ v_{i} \in E_s}} m_s\gamma_{s_i} - \sum_{\substack{s \in \mathscr{S}_r\\ u_{i} \in E_s\\ v_{i} \in E_s}} m_s\gamma_{s_i} - \sum_{\substack{s \in \mathscr{S}_r\\ u_{i} \not\in E_s\\ v_{i} \not\in E_s}} m_s\gamma_{s_i}.
\end{align*}

Since $N$ is a 0,1-network, if $s$ is a species such that $u_{i}$ and $v_{i}$ both belong to $E_s$, then $\gamma_{s_i} = 0$. On the other hand, if $u_{i}$ and $v_{i}$ are both not in $E_s$, then $\gamma_{s_i} = 0$. Following suit, $u_{i} \in E_s$ and $v_{i} \not \in E_s$ implies $\gamma_{s_i} = -1$; $u_{i} \not\in E_s$ and $v_{i} \in E_s$ implies $\gamma_{s_i} = 1$. Then the previous equation becomes
\begin{align}
    \displaystyle\sum_{s \in \mathscr{S}_b} m_s\gamma_{s_i} - \sum_{s \in \mathscr{S}_r} m_s\gamma_{s_i} 
    &= - \sum_{\substack{s \in \mathscr{S}_b\\ u_{i} \in E_s\\ v_{i} \not \in E_s}} m_s + \sum_{\substack{s \in \mathscr{S}_b\\ u_{i} \not\in E_s\\ v_{i} \in E_s}} m_s + \sum_{\substack{s \in \mathscr{S}_r\\ u_{i} \in E_s\\ v_{i} \not \in E_s}} m_s - \sum_{\substack{s \in \mathscr{S}_r\\ u_{i} \not\in E_s\\ v_{i} \in E_s}} m_s . \label{eq:sumbycolor}
\end{align}

For each vertex $w$ in the reaction edge $E_i = \{u_{i},v_{i}\}$ where $w' \in E_i \setminus \{w\}$, if $m_{i}$ denotes the multiplicity of $E_{i}$ in $\mathscr{E}$, then notice that
\begin{equation}\label{red}
    \deg_{\mathscr{E}_r}(w) = m_i + \sum_{\substack{s \in \mathscr{S}_r\\ w \in E_s\\ w' \not\in E_s}} m_s + \sum_{\substack{s \in \mathscr{S}_r\\ w \in E_s\\ w' \in E_s}} m_s
\end{equation}
and 
\begin{equation}\label{blue}
    \deg_{\mathscr{E}_b}(w) = m_i + \sum_{\substack{s \in \mathscr{S}_b\\ w \in E_s\\ w' \not\in E_s}} m_s + \sum_{\substack{s \in \mathscr{S}_b\\ w \in E_s\\ w' \in E_s}} m_s .
\end{equation}

If $i \neq j$, then since $\deg_{\mathscr{E}_r}(u_{i}) = \deg_{\mathscr{E}_b}(u_{i})$ and $\deg_{\mathscr{E}_r}(v_{i}) = \deg_{\mathscr{E}_b}(v_{i})$, the following equation holds
\begin{equation}\label{difference}
     0 = - \deg_{\mathscr{E}_b}(u_{i}) + \deg_{\mathscr{E}_b}(v_{i}) + \deg_{\mathscr{E}_r}(u_{i}) - \deg_{\mathscr{E}_r}(v_{i}).
\end{equation}
By substituting the equations \eqref{red} and \eqref{blue} into \eqref{difference}, we have 
\begin{align}
    0 &= - \left( m_i + \sum_{\substack{s \in \mathscr{S}_b\\ u_i \in E_s\\ v_i \not\in E_s}} m_s + \sum_{\substack{s \in \mathscr{S}_b\\ u_i \in E_s\\ v_i \in E_s}} m_s \right) + m_i + \sum_{\substack{s \in \mathscr{S}_b\\ u_i \not\in E_s\\ v_i \in E_s}} m_s + \sum_{\substack{s \in \mathscr{S}_b\\ u_i \in E_s\\ v_i \in E_s}} m_s\\
    &+ m_i + \sum_{\substack{s \in \mathscr{S}_r\\ u_i \in E_s\\ v_i \not\in E_s}} m_s + \sum_{\substack{s \in \mathscr{S}_r\\ u_i \in E_s\\ v_i \in E_s}} m_s - \left( m_i + \sum_{\substack{s \in \mathscr{S}_r\\ u_i \not\in E_s\\ v_i \in E_s}} m_s + \sum_{\substack{s \in \mathscr{S}_r\\ u_i \in E_s\\ v_i \in E_s}} m_s \right) \\
    &=  -\sum_{\substack{s \in \mathscr{S}_b\\ u_i \in E_s\\ v_i \not\in E_s}} m_s + \sum_{\substack{s \in \mathscr{S}_b\\ u_i \not\in E_s\\ v_i \in E_s}} m_s + \sum_{\substack{s \in \mathscr{S}_r\\ u_i \in E_s\\ v_i \not\in E_s}} m_s - \sum_{\substack{s \in \mathscr{S}_r\\ u_i \not\in E_s\\ v_i \in E_s}} m_s. \label{eq:sumequaltozero}
\end{align}
On the other hand, if $i = j$, then in a similar way the equations $\deg_{\mathscr{E}_r}(u_j) = \deg_{\mathscr{E}_b}(u_j) + k$ and $\deg_{\mathscr{E}_r}(v_{j}) = \deg_{\mathscr{E}_b}(v_{j})$ together with \eqref{red} and \eqref{blue} imply 
\begin{equation}
 k = -\sum_{\substack{s \in \mathscr{S}_b\\ u_j \in E_s\\ v_{j} \not\in E_s}} m_s + \sum_{\substack{s \in \mathscr{S}_b\\ u_j \not\in E_s\\ v_{j} \in E_s}} m_s + \sum_{\substack{s \in \mathscr{S}_r\\ u_j \in E_s\\ v_{j} \not\in E_s}} m_s - \sum_{\substack{s \in \mathscr{S}_r\\ u_j \not\in E_s\\ v_{j} \in E_s}} m_s. \label{eq:sumtok}
\end{equation}
Now using \eqref{eq:sumbycolor}, \eqref{eq:sumequaltozero}, and \eqref{eq:sumtok}, we obtain
\begin{align*}
    k\kappa_jx^{y_j} 
    &= \kappa_jx^{y_j}\left( \displaystyle\sum_{s \in \mathscr{S}_b} m_s\gamma_{s_j} - \sum_{s \in \mathscr{S}_r} m_s\gamma_{s_j} \right) \\
    &+ \sum_{\substack{y_i \to y_i'\\ i \neq j}} \kappa_{i}x^{y_i} \left(\sum_{s \in \mathscr{S}_b} m_s\gamma_{s_i} - \sum_{s \in \mathscr{S}_r} m_s\gamma_{s_i} \right) \\
    &= \sum_{y_i \to y_i'} \kappa_{i}x^{y_i} \left(\sum_{s \in \mathscr{S}_b} m_s\gamma_{s_i} - \sum_{s \in \mathscr{S}_r} m_s\gamma_{s_i} \right)\\
    &= \sum_{y_i \to y_i'} \kappa_{i}x^{y_i} \sum_{s \in \mathscr{S}_b} m_s\gamma_{s_i} - \sum_{y_i \to y_i'} \kappa_{i}x^{y_i} \sum_{s \in \mathscr{S}_r} m_s\gamma_{s_i}\\
    &= \sum_{s \in \mathscr{S}_b} m_s \sum_{y_i \to y_i'} \gamma_{s_i}\kappa_{i}x^{y_i} - \sum_{s \in \mathscr{S}_r} m_s \sum_{y_i \to y_i'} \gamma_{s_i}\kappa_{i}x^{y_i}\\
    &= \sum_{s \in \mathscr{S}_b} m_s \dot{x}_s - \sum_{s \in \mathscr{S}_r} m_s \dot{x}_s\\
    &= \sum_{E_s \in \mathscr{E}_b} \dot{x}_s - \sum_{E_s \in \mathscr{E}_r} \dot{x}_s. 
\end{align*}
\end{proof}

\begin{example} Consider the $0,1$-network pictured in Figure \ref{examplehypergraph} its network hypergraph.

\begin{figure}[H]
    \begin{multicols}{2}
    \noindent
    \centering
    
    \begin{tikzcd}
        A+B \ar[r,"\kappa_1"] & D\\
        A+C \arrow[r, "\kappa_2"] & D\\
        B+C \ar[r,"\kappa_3"] & D
    \end{tikzcd}
    
    \begin{tikzpicture}[scale = .6]
    \node (AB) at (0,4) {};
    \node (AC) at (-.5,2) {};
    \node (BC) at (.5,0) {};
    \node (N1) at (2.5,4) {};
    \node (N2) at (3,2) {};
    \node (N3) at (2.7,0) {};
    
    \begin{scope}[fill opacity=0.5]
    \filldraw[fill=violet!70] ($(AB)+(0,.2)$)
        to[out=0,in=180] ($(N1)+(0,.2)$)
        to[out=0,in=90] ($(N1)+(.2,0)$)
        to[out=270,in=0] ($(N1)+(0,-.2)$)
        to[out=180,in=0] ($(AB)+(0,-.2)$)
        to[out=180,in=270] ($(AB)+(-.2,0)$)
        to[out=90,in=180] ($(AB)+(0,.2)$);
    \filldraw[fill=brown!90] ($(AC)+(0,.2)$)
        to[out=0,in=180] ($(N2)+(0,.2)$)
        to[out=0,in=90] ($(N2)+(.2,0)$)
        to[out=270,in=0] ($(N2)+(0,-.2)$)
        to[out=180,in=0] ($(AC)+(0,-.2)$)
        to[out=180,in=270] ($(AC)+(-.2,0)$)
        to[out=90,in=180] ($(AC)+(0,.2)$);
    \filldraw[fill=magenta!80] ($(BC)+(0,.2)$)
        to[out=0,in=180] ($(N3)+(0,.2)$)
        to[out=0,in=90] ($(N3)+(.2,0)$)
        to[out=270,in=0] ($(N3)+(0,-.2)$)
        to[out=180,in=0] ($(BC)+(0,-.2)$)
        to[out=180,in=270] ($(BC)+(-.2,0)$)
        to[out=90,in=180] ($(BC)+(0,.2)$);
    \filldraw[fill=yellow] ($(AB) + (0,0.45)$)
        to[out=20,in=90] ($(1,3)+(.5,0)$)
        to[out=270,in=90] (1.25,1.25)
        to[out=270,in=0] ($(BC) + (0,-0.5)$)
        to[out=180,in=270] ($(BC) + (-.5,0)$)
        to[out=90,in=270] ($(1.25,2)+(-.5,0)$)
        to[out=90,in=340] ($(AB) + (0,-0.35)$)
        to[out=160,in=270] ($(AB) + (-0.3,0)$)
        to[out=90,in=200] ($(AB) + (0,0.45)$);
    \filldraw[fill=orange!90] ($(AC) + (0,.3)$)
        to[out=0,in=135] ($(BC) + (.1,.3)$)
        to[out=315,in=90] ($(BC) + (.3,0)$)
        to[out=270,in=0] ($(BC) + (0,-.3)$)
        to[out=180,in=280] ($(AC) + (-.3,-.3)$)
        to[out=100,in=270] ($(AC) + (-.3,0)$)
        to[out=90,in=180] ($(AC) + (0,.3)$);
    \filldraw[fill=pink] ($(AB) + (0,0.5)$)
        to[out=0,in=90] ($(AB) + (.5,0)$)
        to[out=270,in=90] ($(-.25,3) + (.35,0)$)
        to[out=270,in=0] ($(AC) + (0,-0.5)$)
        to[out=180,in=270] ($(AC) + (-0.5,0)$)
        to[out=90,in=260] ($(-.25,3) + (-.35,0)$)
        to[out=80,in=270] ($(AB) + (-0.5,0)$)
        to[out=90,in=180] ($(AB) + (0,0.5)$);
    \filldraw[fill=green!60] ($(N1)+(.05,.4)$)
        to[out=0,in=90] ($(N1)+(.4,0)$)
        to[out=270,in=90] ($(N2)+(.4,0)$)
        to[out=270,in=90] ($(N3)+(.4,0)$)
        to[out=270,in=0] ($(N3)+(0,-.3)$)
        to[out=180,in=270] ($(N3)+(-.3,0)$)
        to[out=90,in=270] ($(N2)+(-.3,0)$)
        to[out=90,in=270] ($(N1)+(-.3,0)$)
        to[out=90,in=180] ($(N1)+(.05,.4)$);
    \end{scope}
    
    \fill (AB) circle (0.1);
    \fill (AC) circle (0.1);
    \fill (BC) circle (0.1);
    
    \fill (N1) circle (0.1);
    \fill (N2) circle (0.1);
    \fill (N3) circle (0.1);
    
    \fill ($(AB)+(-1,0)$) node {$u_1$};
    \fill ($(AC)+(-1,0)$) node {$u_2$};
    \fill ($(BC)+(-1.5,0)$) node {$u_3$};
    \fill ($(N1) + (.75,0)$) node {$v_1$};
    \fill ($(N2)+ (.75,0)$) node {$v_2$};
    \fill ($(N3) + (.75,0)$) node {$v_3$};
    \fill (-.25,3) node {$\scaleobj{.8}{E_{A}}$};
    \fill (1.1,3) node {$\scaleobj{.8}{E_{B}}$};
    \fill (-.15,1) node {$\scaleobj{.8}{E_{C}}$};
    \fill (2.9,1) node {$\scaleobj{.8}{E_{D}}$};
    \fill (N1) node [xshift=-.5cm,yshift=.3cm] {$\scaleobj{.8}{E_1}$};
    \fill (N2) node [xshift=-.5cm,yshift=.3cm] {$\scaleobj{.8}{E_2}$};
    \fill (N3) node [xshift=-.5cm,yshift=.3cm] {$\scaleobj{.8}{E_3}$};
    \end{tikzpicture}
    \end{multicols}
    
    \caption{A chemical reaction network and its network hypergraph.}
    \label{examplehypergraph}
\end{figure}
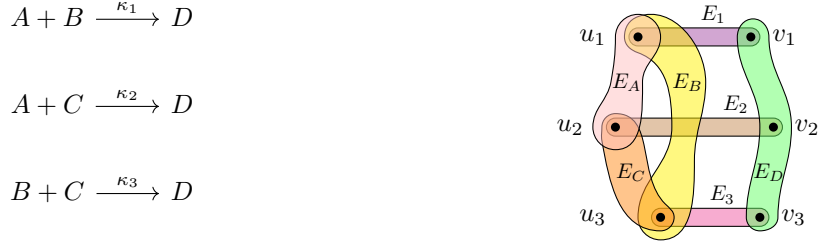

The vertex $u_1$ is almost balanced with respect to the 2-coloring of $\mathscr{E} = \{E_A,E_B,E_C\}$ defined by $\mathscr{E}_r = \{E_A,E_B\}$ and $\mathscr{E}_b = \{E_C\}$; see Figure \ref{fig:bicoloring1}. Further, observe that:
\begin{align*}
    \sum_{E_s \in \mathscr{E}_b} \dot{x}_s - \sum_{E_s \in \mathscr{E}_r} \dot{x}_s 
    &= \dot{x}_C - \dot{x}_A - \dot{x}_B\\
    &= (-\kappa_2x_Ax_C - \kappa_3x_Bx_C) \\
    &\hspace{.5cm} - (-\kappa_1x_Ax_B - \kappa_2x_Ax_C)\\
    &\hspace{.5cm} - (-\kappa_1x_Ax_B - \kappa_3x_Bx_C)\\
    &= 2\kappa_1x_Ax_B.
\end{align*}

\begin{figure}[H]
    \centering
    \begin{tikzpicture}[scale = .6]
    \node (AB) at (0,4) {};
    \node (AC) at (-.5,2) {};
    \node (BC) at (.5,0) {};
    \node (N1) at (2.5,4) {};
    \node (N2) at (3,2) {};
    \node (N3) at (2.7,0) {};
    
    \begin{scope}[fill opacity=0.5]
    \filldraw[fill=red!75] ($(AB) + (0,0.45)$)
        to[out=20,in=90] ($(1,3)+(.5,0)$)
        to[out=270,in=90] (1.25,1.25)
        to[out=270,in=0] ($(BC) + (0,-0.5)$)
        to[out=180,in=270] ($(BC) + (-.5,0)$)
        to[out=90,in=270] ($(1.25,2)+(-.5,0)$)
        to[out=90,in=340] ($(AB) + (0,-0.35)$)
        to[out=160,in=270] ($(AB) + (-0.3,0)$)
        to[out=90,in=200] ($(AB) + (0,0.45)$);
    \filldraw[fill=blue!50] ($(AC) + (0,.3)$)
        to[out=0,in=135] ($(BC) + (.1,.3)$)
        to[out=315,in=90] ($(BC) + (.3,0)$)
        to[out=270,in=0] ($(BC) + (0,-.3)$)
        to[out=180,in=280] ($(AC) + (-.3,-.3)$)
        to[out=100,in=270] ($(AC) + (-.3,0)$)
        to[out=90,in=180] ($(AC) + (0,.3)$);
    \filldraw[fill=red!75] ($(AB) + (0,0.5)$)
        to[out=0,in=90] ($(AB) + (.5,0)$)
        to[out=270,in=90] ($(-.25,3) + (.35,0)$)
        to[out=270,in=0] ($(AC) + (0,-0.5)$)
        to[out=180,in=270] ($(AC) + (-0.5,0)$)
        to[out=90,in=260] ($(-.25,3) + (-.35,0)$)
        to[out=80,in=270] ($(AB) + (-0.5,0)$)
        to[out=90,in=180] ($(AB) + (0,0.5)$);
    \end{scope}
    \fill (AB) circle (0.1);
    \fill (AC) circle (0.1);
    \fill (BC) circle (0.1);
    \fill (N1) circle (0.1);
    \fill (N2) circle (0.1);
    \fill (N3) circle (0.1);
    \fill ($(AB)+(-1,0)$) node {$u_1$};
    \fill ($(AC)+(-1,0)$) node {$u_2$};
    \fill ($(BC)+(-1.5,0)$) node {$u_3$};
    \fill ($(N1) + (.75,0)$) node {$v_1$};
    \fill ($(N2)+ (.75,0)$) node {$v_2$};
    \fill ($(N3) + (.75,0)$) node {$v_3$};
    \fill (-.25,3) node {$\scaleobj{.8}{E_{A}}$};
    \fill (1.1,3) node {$\scaleobj{.8}{E_{B}}$};
    \fill (-.15,1) node {$\scaleobj{.8}{E_{C}}$};
    \end{tikzpicture}
    \caption{A 2-coloring of the edges $\{E_A,E_B,E_C\}$ from the hypergraph pictured in Figure \ref{examplehypergraph}.}
    \label{fig:bicoloring1}
\end{figure}
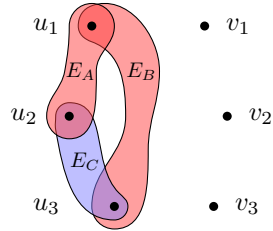
\end{example}

The previous proposition concerned vertices in $\mathcal H_N$ corresponding to reactant complexes, using similar reasoning, we get a comparable proposition regarding vertices in $\mathcal H_N$ corresponding to product complexes.
\begin{prop}\label{prodalmostbalanced} Let $N$ be a 0,1-network, $\mathscr{E}$ be a multiset over the edges in $E(\mathcal{H}_N)$, and $v_j \in V(\mathcal{H}_N)$. If $v_{j}$ is almost balanced with respect to the 2-coloring $\mathscr{E}=\mathscr{E}_r \sqcup \mathscr{E}_b$, then $k\kappa_{j}x^{y_{j}} = \sum_{E_s \in \mathcal{E}_r} \dot{x}_s - \sum_{E_s \in \mathcal{E}_b} \dot{x}_s$ for some positive integer $k$.
\end{prop}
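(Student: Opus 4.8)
The plan is to mirror the argument for Proposition \ref{almostbalanced} essentially verbatim, since the only structural change is that the distinguished vertex is now the product vertex $v_j$ rather than the reactant vertex $u_j$. First I would, without loss of generality, take $\mathscr{E}$ to be minimal so that every edge receives a single color; exactly as in the previous proof this forces the species sets $\mathscr{S}_r = \{s \in \mathscr{S} : E_s \in \mathscr{E}_r\}$ and $\mathscr{S}_b = \{s \in \mathscr{S} : E_s \in \mathscr{E}_b\}$ to be disjoint, and I would again write $m_s$ for the multiplicity of $E_s$ in $\mathscr{E}$.

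The heart of the earlier argument is the identity extracted from \eqref{eq:sumbycolor} together with the degree expansions \eqref{red} and \eqref{blue}: for every reaction $i$,
\[
\sum_{s \in \mathscr{S}_b} m_s\gamma_{s_i} - \sum_{s \in \mathscr{S}_r} m_s\gamma_{s_i} = \bigl(\deg_{\mathscr{E}_r}(u_i) - \deg_{\mathscr{E}_b}(u_i)\bigr) - \bigl(\deg_{\mathscr{E}_r}(v_i) - \deg_{\mathscr{E}_b}(v_i)\bigr).
\]
This computation uses only that $N$ is a $0,1$-network, so that $\gamma_{s_i} \in \{-1,0,1\}$ according to which of $u_i, v_i$ lie in $E_s$, and it is completely indifferent to which vertex happens to be almost balanced. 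I would therefore reuse it unchanged rather than redo the four-way case split.

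Now I apply the hypothesis that $v_j$ is almost balanced: $\deg_{\mathscr{E}_r}(v_j) - \deg_{\mathscr{E}_b}(v_j) = k > 0$, while $\deg_{\mathscr{E}_r}(w) - \deg_{\mathscr{E}_b}(w) = 0$ for every other vertex $w$, in particular for $u_j$ and for both endpoints of every reaction $i \neq j$. Plugging these into the displayed identity gives $\sum_{s \in \mathscr{S}_b} m_s\gamma_{s_i} - \sum_{s \in \mathscr{S}_r} m_s\gamma_{s_i} = 0$ for $i \neq j$, and for $i = j$ it gives $0 - k = -k$. Substituting these coefficients into the expansion $\sum_{y_i \to y_i'} \kappa_i x^{y_i}\bigl(\sum_{s \in \mathscr{S}_b} m_s\gamma_{s_i} - \sum_{s \in \mathscr{S}_r} m_s\gamma_{s_i}\bigr) = \sum_{E_s \in \mathscr{E}_b}\dot{x}_s - \sum_{E_s \in \mathscr{E}_r}\dot{x}_s$ collapses the left side to $-k\kappa_j x^{y_j}$, and rearranging yields $k\kappa_j x^{y_j} = \sum_{E_s \in \mathscr{E}_r}\dot{x}_s - \sum_{E_s \in \mathscr{E}_b}\dot{x}_s$, as claimed.

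The one point demanding care — and the only place the product case genuinely differs from the reactant case — is the sign bookkeeping in the $i = j$ term. Because $\gamma_{s_i}$ is the net change $y_{i_s}' - y_{i_s}$, the reactant vertex $u_i$ and the product vertex $v_i$ enter the identity above with opposite signs; consequently relocating the excess degree $k$ from $u_j$ (as in Proposition \ref{almostbalanced}) to $v_j$ flips the $i=j$ coefficient from $+k$ to $-k$, and this single sign change is exactly what interchanges the roles of $\mathscr{E}_r$ and $\mathscr{E}_b$ in the conclusion. I expect no further obstacle.
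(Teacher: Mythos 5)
Your proposal is correct and follows exactly the route the paper intends: the paper omits the proof of Proposition \ref{prodalmostbalanced}, remarking only that it follows by ``similar reasoning'' to Proposition \ref{almostbalanced}, and your argument is precisely that reasoning, with the key identity $\sum_{s \in \mathscr{S}_b} m_s\gamma_{s_i} - \sum_{s \in \mathscr{S}_r} m_s\gamma_{s_i} = \bigl(\deg_{\mathscr{E}_r}(u_i) - \deg_{\mathscr{E}_b}(u_i)\bigr) - \bigl(\deg_{\mathscr{E}_r}(v_i) - \deg_{\mathscr{E}_b}(v_i)\bigr)$ correctly reused and the sign flip in the $i=j$ term (from $+k$ to $-k$, hence the interchange of $\mathscr{E}_r$ and $\mathscr{E}_b$ in the conclusion) properly tracked. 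No gaps.
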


\begin{corollary} \label{monomial_in_ideal}
Let $N$ be a 0,1-network. If either vertex $u_{i}$ or $v_{i}$ in $V(\mathcal{H}_N)$ is almost balanced, then $x^{y_i} \in \mathcal{I}(N)$.
\end{corollary}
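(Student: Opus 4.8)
The plan is to read the corollary off directly from the two preceding propositions, since it is essentially the statement that an almost balanced vertex forces the corresponding reactant monomial into the steady-state ideal. The only genuine content is the passage from the scalar multiple $k\kappa_i x^{y_i}$ produced by those propositions to the bare monomial $x^{y_i}$, which hinges on working over the field $\mathbb{K}(\bm{\kappa})$.

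First I would split into two cases according to which vertex is almost balanced. Suppose $u_i$ is almost balanced; by hypothesis there is a multiset $\mathscr{E}$ and a 2-coloring $\mathscr{E} = \mathscr{E}_r \sqcup \mathscr{E}_b$ with respect to which $u_i$ is almost balanced. Proposition \ref{almostbalanced} then yields a positive integer $k$ with
$$k\kappa_i x^{y_i} = \sum_{E_s \in \mathscr{E}_b} \dot{x}_s - \sum_{E_s \in \mathscr{E}_r} \dot{x}_s.$$
The right-hand side is a $\mathbb{Z}$-linear combination of the generators $\dot{x}_s$ of $\mathcal{I}(N)$, hence lies in $\mathcal{I}(N)$, so $k\kappa_i x^{y_i} \in \mathcal{I}(N)$.

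Next I would divide out the scalar. The coefficient $k\kappa_i$ is a nonzero element of the coefficient field $\mathbb{K}(\bm{\kappa})$: here $k$ is a positive integer and $\kappa_i$ is one of the parameters $\bm{\kappa}$, so $k\kappa_i$ is a unit in $\mathbb{K}(\bm{\kappa})$. Multiplying the membership $k\kappa_i x^{y_i} \in \mathcal{I}(N)$ by $(k\kappa_i)^{-1}$ gives $x^{y_i} \in \mathcal{I}(N)$. The case where $v_i$ is almost balanced is identical, invoking Proposition \ref{prodalmostbalanced} in place of Proposition \ref{almostbalanced}; the only change is that the red and blue sums switch roles, which is immaterial once we divide by $k\kappa_i$.

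The main (and essentially only) subtlety to flag is the role of the coefficient field: the conclusion $x^{y_i} \in \mathcal{I}(N)$ genuinely relies on the rate constants being treated as invertible symbolic parameters, i.e. on $\mathcal{I}(N)$ being an ideal in $\mathbb{K}(\bm{\kappa})[\bm{x}]$ rather than in $\mathbb{K}[\bm{\kappa}][\bm{x}]$. Over the latter ring one would obtain only $k\kappa_i x^{y_i} \in \mathcal{I}(N)$, not the clean monomial membership. Beyond this observation, no further calculation is needed.
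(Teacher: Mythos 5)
Your proposal is correct and is exactly the argument the paper intends: the corollary is stated without proof precisely because it follows immediately from Propositions \ref{almostbalanced} and \ref{prodalmostbalanced} by dividing the relation $k\kappa_i x^{y_i} = \pm\bigl(\sum_{E_s \in \mathscr{E}_b} \dot{x}_s - \sum_{E_s \in \mathscr{E}_r} \dot{x}_s\bigr)$ by the unit $k\kappa_i \in \mathbb{K}(\bm{\kappa})$. Your remark about the necessity of working over $\mathbb{K}(\bm{\kappa})$ rather than $\mathbb{K}[\bm{\kappa}]$ is a correct and worthwhile observation, consistent with the paper's setup of $\mathcal{I}(N) \subseteq \mathbb{K}(\bm{\kappa})[\mathbf{x}]$.
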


\begin{prop}\label{swap_colors}
Let $N$ be a chemical reaction network containing the reaction $y \to y'$ such that $y' \neq \varnothing$. Then the vertex $u \in V(\mathcal{H})$ corresponding to $y$ in the reaction $y \to y'$ is almost balanced if and only if the vertex $v \in V(\mathcal{H})$ corresponding to $y'$ in the reaction $y \to y'$ is almost balanced.
\end{prop}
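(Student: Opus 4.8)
The plan is to prove the two implications by exploiting the one edge that distinguishes $u_j$ from $v_j$, namely the reaction hyperedge $E_j = \{u_j, v_j\}$. The crucial observation is that, because $y' \neq \varnothing$, this reaction edge is nonempty and covers \emph{exactly} the two vertices $u_j$ and $v_j$ and no others. Consequently, throwing additional copies of $E_j$ into a coloring alters the red-minus-blue discrepancy only at $u_j$ and $v_j$, while leaving the balance condition at every other vertex completely untouched. This is what lets us transfer an imbalance from $u_j$ to $v_j$.

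By symmetry it suffices to prove one direction, so suppose $u_j$ is almost balanced and fix a witnessing $2$-coloring $\mathscr{E} = \mathscr{E}_r \sqcup \mathscr{E}_b$, so that $\deg_{\mathscr{E}_r}(u_j) = \deg_{\mathscr{E}_b}(u_j) + k$ for some $k \in \mathbb{Z}_{>0}$ and $\deg_{\mathscr{E}_r}(w) = \deg_{\mathscr{E}_b}(w)$ for all $w \neq u_j$ (in particular at $v_j$). I would then form a new multiset $\mathscr{E}' = \mathscr{E} \sqcup \{E_j^{(k)}\}$ by adjoining $k$ copies of the reaction edge $E_j$, coloring all of them blue. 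At $u_j$ the blue degree rises by $k$, which exactly restores balance there; at $v_j$ the blue degree also rises by $k$, creating a blue excess of $k$; and every other vertex is unaffected since $E_j$ touches no other vertex.

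The resulting coloring has blue exceeding red at $v_j$, which is opposite to the sign convention in the definition of almost balanced (which demands that red exceed blue). The final step is therefore to swap the two colors, setting $\mathscr{E}''_r = \mathscr{E}'_b$ and $\mathscr{E}''_b = \mathscr{E}'_r$. Swapping preserves balance at every balanced vertex and converts the blue excess of $k$ at $v_j$ into a red excess of $k$, so $v_j$ is almost balanced with the same positive integer $k$. The reverse implication is verbatim the same argument with the roles of $u_j$ and $v_j$ interchanged (adjoining $k$ copies of $E_j$ to red and swapping).

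I expect the only genuine subtlety to be the bookkeeping of the color/sign convention, that is, recognizing that the color swap is needed to match the definition of almost balanced, together with making explicit that the hypothesis $y' \neq \varnothing$ is precisely what supplies the reaction edge: were $E_j$ empty, there would be no single edge simultaneously incident to $u_j$ and $v_j$, and the transfer argument would have nothing to push the imbalance across.
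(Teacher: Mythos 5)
Your proposal is correct and is essentially the paper's own proof: the paper likewise forms $\mathscr{E}' = \mathscr{E} \sqcup \{e^{(k)}\}$ with $\mathscr{E}_r' = \mathscr{E}_b \sqcup \{e^{(k)}\}$ and $\mathscr{E}_b' = \mathscr{E}_r$, which is exactly your ``add $k$ blue copies of the reaction edge, then swap colors'' done in a single step, and it handles both directions by the same symmetry. (One trivial slip: in the reverse direction the parenthetical should again be ``adjoin $k$ copies to blue, then swap,'' not to red, but the argument is unaffected.)
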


\begin{proof}
Let $N = (\mathscr{S}, \mathscr{C}, \mathscr{R})$ be a chemical reaction network such that $y \to y' \in \mathscr{R}$ and $y' \neq \varnothing$. Let $u$ and $v$ be distinct vertices in $V(\mathcal{H})$ corresponding to the complexes $y$ and $y'$ of the reaction $y \to y'$, respectively. Since $y' \neq \varnothing$, then the hyperedge $e \in E(\mathcal{H})$ corresponding to the reaction $y \to y'$ is nonempty, that is, $e = \{u,v\}$; see Figure \ref{fig:swapcolors}. Assume $w \in e$ is almost balanced. We show that $w' \in e \setminus \{w\}$ is almost balanced.

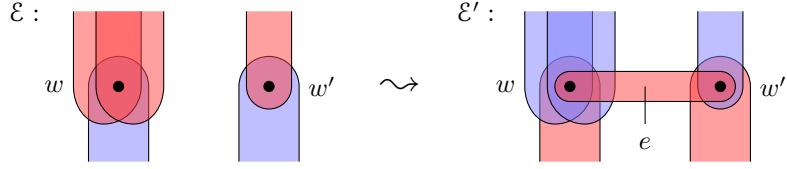
\begin{figure}[!htb]
    \captionsetup{justification=centering,margin=2cm}
    \begin{center}
    \begin{tikzpicture}
\node (u1) at (0,0) {};
\node (v1) at (2,0) {};
\node (u2) at (6,0) {};
\node (v2) at (8,0) {};

\begin{scope}[fill opacity = .5]
\filldraw[fill=blue!50] 
    ($(u1)+(-.4,-1)$) 
    to[out=90,in=270] ($(u1)+(-.4,0)$)
    to[out=90,in=180] ($(u1)+(0,.4)$)
    to[out=0,in=90] ($(u1)+(.4,0)$)
    to[out=270,in=90] ($(u1)+(.4,-1)$);
\filldraw[fill=red!75] 
    ($(u1)+(-.6,1)$) 
    to[out=270,in=90] ($(u1)+(-.6,0)$)
    to[out=270,in=180] ($(u1)+(-.2,-.5)$)
    to[out=0,in=270] ($(u1)+(.3,0)$)
    to[out=90,in=270] ($(u1)+(.3,1)$);
\filldraw[fill=red!75] 
    ($(u1)+(-.3,1)$) 
    to[out=270,in=90] ($(u1)+(-.3,0)$)
    to[out=270,in=180] ($(u1)+(.2,-.5)$)
    to[out=0,in=270] ($(u1)+(.6,0)$)
    to[out=90,in=270] ($(u1)+(.6,1)$);

\filldraw[fill=red!75] 
    ($(u2)+(-.4,-1)$) 
    to[out=90,in=270] ($(u2)+(-.4,0)$)
    to[out=90,in=180] ($(u2)+(0,.4)$)
    to[out=0,in=90] ($(u2)+(.4,0)$)
    to[out=270,in=90] ($(u2)+(.4,-1)$);
\filldraw[fill=blue!50] 
    ($(u2)+(-.6,1)$) 
    to[out=270,in=90] ($(u2)+(-.6,0)$)
    to[out=270,in=180] ($(u2)+(-.2,-.5)$)
    to[out=0,in=270] ($(u2)+(.3,0)$)
    to[out=90,in=270] ($(u2)+(.3,1)$);
\filldraw[fill=blue!50] 
    ($(u2)+(-.3,1)$) 
    to[out=270,in=90] ($(u2)+(-.3,0)$)
    to[out=270,in=180] ($(u2)+(.2,-.5)$)
    to[out=0,in=270] ($(u2)+(.6,0)$)
    to[out=90,in=270] ($(u2)+(.6,1)$);
    
\filldraw[fill=blue!50] 
    ($(v1)+(-.4,-1)$) 
    to[out=90,in=270] ($(v1)+(-.4,0)$)
    to[out=90,in=180] ($(v1)+(0,.4)$)
    to[out=0,in=90] ($(v1)+(.4,0)$)
    to[out=270,in=90] ($(v1)+(.4,-1)$);
\filldraw[fill=red!75] 
    ($(v1)+(-.3,1)$) 
    to[out=270,in=90] ($(v1)+(-.3,0)$)
    to[out=270,in=180] ($(v1)+(0,-.3)$)
    to[out=0,in=270] ($(v1)+(.3,0)$)
    to[out=90,in=270] ($(v1)+(.3,1)$);
    
\filldraw[fill=red!75] 
    ($(v2)+(-.4,-1)$) 
    to[out=90,in=270] ($(v2)+(-.4,0)$)
    to[out=90,in=180] ($(v2)+(0,.4)$)
    to[out=0,in=90] ($(v2)+(.4,0)$)
    to[out=270,in=90] ($(v2)+(.4,-1)$);
\filldraw[fill=blue!50] 
    ($(v2)+(-.3,1)$) 
    to[out=270,in=90] ($(v2)+(-.3,0)$)
    to[out=270,in=180] ($(v2)+(0,-.3)$)
    to[out=0,in=270] ($(v2)+(.3,0)$)
    to[out=90,in=270] ($(v2)+(.3,1)$);
    
\filldraw[fill=red!75] 
        ($(u2) + (0,.2)$)
        to[out=0,in=180] ($(v2) + (0,.2)$)
        to[out=0,in=90] ($(v2) + (.2,0)$)
        to[out=270,in=0] ($(v2) + (0,-.2)$)
        to[out=180,in=0] ($(u2) + (0,-.2)$)
        to[out=180,in=270] ($(u2) + (-.2,0)$)
        to[out=90,in=180] ($(u2) + (0,.2)$);
\end{scope}

\fill (u1) circle (.075);
\fill (v1) circle (.075);
\fill (u2) circle (.075);
\fill (v2) circle (.075);
\draw ($(u2)+(1,0)$) -- ($(u2)+(1,-.5)$);

\fill (u1) node [left, xshift=-.6cm] {$w$};
\fill (v1) node [right, xshift=.4cm] {$w'$};
\fill (u2) node [left, xshift=-.6cm] {$w$};
\fill (v2) node [right, xshift=.4cm] {$w'$};
\fill ($(u2)+(1,-.75)$) node {$e$};
\fill (3.75,0) node {$\scaleobj{1.5}{\leadsto}$};
\fill ($(u1)+(-1.25,1)$) node {$\mathscr{E}:$};
\fill ($(u2)+(-1.25,1)$) node {$\mathscr{E}':$};
\end{tikzpicture}
    \end{center}
    \caption{Obtaining a 2-coloring of $\mathscr{E}'$ from a 2-coloring of $\mathscr{E}$ by swapping colors and adding red reactions edges.}
    \label{fig:swapcolors}
\end{figure}

Suppose $w$ is almost balanced with respect to the 2-colored multiset $\mathscr{E} = \mathscr{E}_r \sqcup \mathscr{E}_b$ over the edges in $E(\mathcal{H})$. Hence, $\deg_{\mathscr{E}_r}(w) = \deg_{\mathscr{E}_b}(w)+k$ for some positive integer $k$ and $\deg_{\mathscr{E}_r}(z) = \deg_{\mathscr{E}_b}(z)$ for all $z \in V(\mathcal{H}) \setminus \{w\}$. Set $\mathscr{E}' = \mathscr{E} \sqcup \left\{e^{(k)}\right\}$. 2-color the edges in $\mathscr{E}'$ so that $\mathscr{E}_r' = \mathscr{E}_b \sqcup \left\{ e^{(k)} \right\}$ and $\mathscr{E}_b' = \mathscr{E}_r$. Since $e$ covers only $w$ and $w'$, then $\deg_{\mathscr{E}_r'}(w) = \deg_{\mathscr{E}_b}(w) + k = \deg_{\mathscr{E}_r}(w) = \deg_{\mathscr{E}_b'}(w)$, $\deg_{\mathscr{E}_r'}(w') = \deg_{\mathscr{E}_b}(w') + k = \deg_{\mathscr{E}_r}(w') + k = \deg_{\mathscr{E}_b'}(w')+k$, and $\deg_{\mathscr{E}_r'}(z) = \deg_{\mathscr{E}_b}(z) = \deg_{\mathscr{E}_r}(z) = \deg_{\mathscr{E}_b'}(z)$ for all $z \in V(\mathcal{H})\setminus \{w,w'\}$. Therefore, $w'$ is almost balanced with respect to $\mathscr{E}'$.
\end{proof}

A reaction $y_i \to y_i' \in \mathscr{R}$ is said to be \textit{reversible} if $y_i' \to y_i \in \mathscr{R}$. A network $N = (\mathscr{S}, \mathscr{C}, \mathscr{R})$ is \textit{reversible} if every reaction $y \to y' \in \mathscr{R}$ is reversible; $N$ is said to be \textit{weakly reversible} if the reaction graph is strongly connected, that is, if for any two complexes $y$ and $y'$ in $\mathscr{C}$, the existence of a sequence $y_1, \ldots, y_j \in \mathscr{C}$ such that $y=y_1 \to \cdots \to y_j=y'$ implies the existence of a sequence $y_1', \ldots, y_k' \in \mathscr{C}$ such that $y'=y_1' \to \cdots \to y_k'=y$. 

In a reversible reaction we require both complexes belonging to a reversible reaction to be nonempty since we are not considering reactions of the form $\varnothing \to y$. Vertices in the network hypergraph that come from complexes belonging to a reversible reaction are never almost balanced.

\begin{prop}\label{reversible_is_balanced}
Let $N$ be a chemical reaction network. If $y \to y' \in \mathscr{R}$ is reversible, then the vertices in $V(\mathcal{H}_N)$ corresponding to either $y$ or $y'$ in the reaction $y \to y'$ or $y' \to y$ is not almost balanced.
\end{prop}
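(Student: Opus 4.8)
The plan is to exploit the symmetry that a reversible pair creates in the hypergraph. I would write the two reactions of the reversible pair as reaction $i$, namely $y \to y'$, and reaction $\ell$, namely $y' \to y$, so that in $\mathcal{H}_N$ the four relevant vertices are $u_i, v_i$ (from reaction $i$) and $u_\ell, v_\ell$ (from reaction $\ell$). The crucial observation is that $u_i$ and $v_\ell$ both come from the complex $y$, hence are covered by \emph{exactly the same} species hyperedges, namely $\{E_s : s \in \supp(y)\}$; likewise $v_i$ and $u_\ell$ both come from $y'$ and are covered by exactly $\{E_s : s \in \supp(y')\}$. The only thing distinguishing these ``twins'' is the reaction hyperedge: $E_i = \{u_i, v_i\}$ covers $u_i$ and $v_i$, while $E_\ell = \{u_\ell, v_\ell\}$ covers $u_\ell$ and $v_\ell$ (both edges are nonempty since $y, y' \neq \varnothing$ in a reversible reaction).

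First I would fix an arbitrary multiset $\mathscr{E}$ over $E(\mathcal{H}_N)$ with a $2$-coloring $\mathscr{E} = \mathscr{E}_r \sqcup \mathscr{E}_b$ and introduce the defect $d(w) = \deg_{\mathscr{E}_r}(w) - \deg_{\mathscr{E}_b}(w)$. Letting $\Delta_y$ (resp.\ $\Delta_{y'}$) denote the red-minus-blue total multiplicity over the species edges $E_s$ with $s \in \supp(y)$ (resp.\ $s \in \supp(y')$), and letting $\alpha, \beta$ denote the red-minus-blue multiplicity of the single reaction edges $E_i, E_\ell$ covering these vertices, the incidence description above gives
\begin{align*}
d(u_i) &= \Delta_y + \alpha, & d(v_\ell) &= \Delta_y + \beta,\\
d(v_i) &= \Delta_{y'} + \alpha, & d(u_\ell) &= \Delta_{y'} + \beta.
\end{align*}
Subtracting yields the key identity $d(u_i) - d(v_\ell) = \alpha - \beta = d(v_i) - d(u_\ell)$, valid for \emph{every} $2$-coloring of \emph{every} multiset.

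The final step is to derive a contradiction from this identity. If any one of the four vertices were almost balanced with respect to $\mathscr{E}$, then that vertex would have defect equal to a positive integer $k$ while, by the definition of almost balanced, the remaining three (being distinct from the special vertex) would all have defect $0$. Substituting into $d(u_i) - d(v_\ell) = d(v_i) - d(u_\ell)$ then forces $k = 0$: taking $u_i$ special gives $k = 0$, $v_i$ special gives $0 = k$, $u_\ell$ special gives $0 = -k$, and $v_\ell$ special gives $-k = 0$. Each contradicts $k > 0$, so none of $u_i, v_i, u_\ell, v_\ell$ is almost balanced.

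I do not expect a serious obstacle, as the argument is a short linear computation once the twin structure is identified. The only points needing care are (i) checking that $u_i$ and $v_\ell$ (and likewise $v_i$ and $u_\ell$) really meet the same species edges --- this uses $\supp(y_\ell') = \supp(y)$ together with the fact that species-edge incidence depends only on the underlying complex --- and (ii) confirming that each of these vertices lies in exactly one reaction edge, so that $\alpha$ and $\beta$ are well defined; both follow directly from the definition of $\mathcal{H}_N$. Alternatively, one could shorten the casework by invoking Proposition \ref{swap_colors} to reduce to the two reactant vertices and then using the symmetry $y \leftrightarrow y'$, but the direct four-case check above is already immediate.
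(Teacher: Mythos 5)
Your proof is correct and follows essentially the same route as the paper's: both exploit the fact that $u_i,v_\ell$ (and $v_i,u_\ell$) are ``twins'' covered by identical species hyperedges and differ only in their reaction hyperedge, then decompose each vertex's red-minus-blue degree into a species part and a reaction part and derive a linear identity forcing $k=0$. The paper chains the equalities vertex by vertex using separate counts $\sigma_c(w)$ and $\rho_c(w)$ and reduces to the single reactant vertex by symmetry, whereas you package the shared quantities as $\Delta_y,\Delta_{y'},\alpha,\beta$ and run four short cases, but this is only a difference in bookkeeping, not in substance.
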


\begin{proof} Let $N = (\mathscr{S}, \mathscr{C}, \mathscr{R})$ be a chemical reaction network such that $\mathscr{R}$ contains the reactions $y \to y'$ and $y' \to y$. Let $u,v,u',v'$ be vertices in $V(\mathcal{H}_N)$ such that $u$ and $v$ correspond to $y$ and $y'$ in the reaction $y \to y'$, respectively, and $u'$ and $v'$ correspond to $y'$ and $y$ in the reaction $y' \to y$, respectively. Note that $u$ and $v'$ correspond to the same complex $y$ but in different reactions. Thus, for any $s \in \supp{y}$, $E_s$ covers $u$ if and only if $E_s$ covers $v'$; see the yellow edge in Figure \ref{fig:reversible_subhypergraph}. Similarly, both $u'$ and $v$ correspond to  $y'$ so, for any $s \in \supp{y'}$, $E_s$ covers $u'$ if and only if $E_s$ covers $v$; see the orange edge in Figure \ref{fig:reversible_subhypergraph}. 

By the symmetry of the reversible reactions, it is enough to show that $u$ is not almost balanced. Seeking a contradiction, suppose $u$ is almost balanced with respect to the 2-colored multiset $\mathscr{E} = \mathscr{E}_r \sqcup \mathscr{E}_b$ over the edges in $E(\mathcal{H}_N)$. Then $\deg_{\mathscr{E}_r}(u) = \deg_{\mathscr{E}_b}(u) + k$ for some positive integer $k$ and $\deg_{\mathscr{E}_r}(w) = \deg_{\mathscr{E}_b}(w)$ for all $w \in V(\mathcal{H}_N) \setminus \{u\}$.

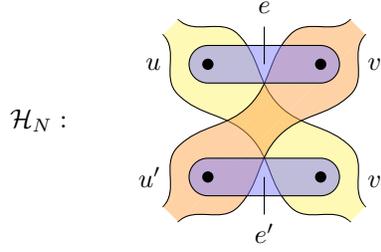
\begin{figure}[!htb]
    \captionsetup{justification=centering,margin=2cm}
    \begin{center}
    \begin{tikzpicture}
\node (u1) at (0,1.5) {};
\node (u2) at (0,0) {};
\node (v1) at (1.5,1.5) {};
\node (v2) at (1.5,0) {};
\node (l) at (-1,.75) {};
\node (r) at (2.5,.75) {};
\node (m) at (.75,.75) {};

\begin{scope}[fill opacity=.5]

\filldraw[fill=yellow!70]
        ($(u1)+(-.4,.6)$)
        to[out=-45,in=135] ($(u1)+(.4,.4)$)
        to[out=-45,in=135] ($(m)+(.2,.2)$)
        to[out=-45,in=135] ($(v2)+(.4,.4)$)
        to[out=-45,in=135] ($(v2)+(.6,-.4)$);
\filldraw[fill=yellow!70]
        ($(u1)+(-.6,.4)$)
        to[out=-45,in=135] ($(u1)+(-.4,-.4)$)
        to[out=-45,in=135] ($(m)+(-.2,-.2)$)
        to[out=-45,in=135] ($(v2)+(-.4,-.4)$)
        to[out=-45,in=135] ($(v2)+(.4,-.6)$);
\fill[yellow!70]
    let \p1=($(u1)!-7mm!(v2)$),
        \p2=($(v2)!-7mm!(u1)$),
        \p3=($(\p1)!1.413mm!90:(\p2)$),
        \p4=($(\p1)!1.413mm!-90:(\p2)$),
        \p5=($(\p2)!1.413mm!90:(\p1)$),
        \p6=($(\p2)!1.413mm!-90:(\p1)$)
    in
    (\p3) -- (\p4)-- (\p5) -- (\p6) -- cycle;
    
\filldraw[fill=orange!70]
        ($(v1)+(.4,.6)$)
        to[out=-135,in=45] ($(v1)+(-.4,.4)$)
        to[out=-135,in=45] ($(m)+(-.2,.2)$)
        to[out=-135,in=45] ($(u2)+(-.4,.4)$)
        to[out=-135,in=45] ($(u2)+(-.6,-.4)$);
\filldraw[fill=orange!70]
        ($(v1)+(.6,.4)$)
        to[out=-135,in=45] ($(v1)+(.4,-.4)$)
        to[out=-135,in=45] ($(m)+(.2,-.2)$)
        to[out=-135,in=45] ($(u2)+(.4,-.4)$)
        to[out=-135,in=45] ($(u2)+(-.4,-.6)$);
\fill[orange!70]
    let \p1=($(v1)!-7mm!(u2)$),
        \p2=($(u2)!-7mm!(v1)$),
        \p3=($(\p1)!1.413mm!90:(\p2)$),
        \p4=($(\p1)!1.413mm!-90:(\p2)$),
        \p5=($(\p2)!1.413mm!90:(\p1)$),
        \p6=($(\p2)!1.413mm!-90:(\p1)$)
    in
    (\p3) -- (\p4)-- (\p5) -- (\p6) -- cycle;

\filldraw[fill=blue!50] 
        ($(u1) + (0,0.25)$)
        to[out=0,in=180] ($(v1) + (0,0.25)$)
        to[out=0,in=90] ($(v1) + (0.25,0)$)
        to[out=270,in=0] ($(v1) + (0,-0.25)$)
        to[out=180,in=0] ($(u1) + (0,-0.25)$)
        to[out=180,in=270] ($(u1) + (-0.25,0)$)
        to[out=90,in=180] ($(u1) + (0,0.25)$);
\filldraw[fill=blue!50] 
        ($(u2) + (0,0.25)$)
        to[out=0,in=180] ($(v2) + (0,0.25)$)
        to[out=0,in=90] ($(v2) + (0.25,0)$)
        to[out=270,in=0] ($(v2) + (0,-0.25)$)
        to[out=180,in=0] ($(u2) + (0,-0.25)$)
        to[out=180,in=270] ($(u2) + (-0.25,0)$)
        to[out=90,in=180] ($(u2) + (0,0.25)$);
\end{scope}

\fill (u1) circle (.075);
\fill (u2) circle (.075);
\fill (v1) circle (.075);
\fill (v2) circle (.075);

\draw ($(m)+(0,.75)$) -- ($(m)+(0,1.25)$);
\draw ($(m)+(0,-.75)$) -- ($(m)+(0,-1.25)$);

\fill (u1) node [left,xshift=-.5cm] {$u$};
\fill (u2) node [left,xshift=-.5cm] {$u'$};
\fill (v1) node [right,xshift=.5cm] {$v$};
\fill (v2) node [right,xshift=.5cm] {$v'$};
\fill ($(m)+(-3,0)$) node {$\mathcal{H}_N:$};
\fill ($(m)+(0,-1.5)$) node {$e'$};
\fill ($(m)+(0,1.5)$) node {$e$};
\end{tikzpicture}
    \end{center}
    \caption{A subhypergraph of a network hypergraph corresponding to a network containing a reversible reaction.}
    \label{fig:reversible_subhypergraph}
\end{figure}

For each $w \in V(\mathcal{H})$ and $c \in \{r,b\}$, let $\sigma_c(w)$ be the number of species edges in $\mathscr{E}_c$ that cover $w$ and let $\rho_c(w)$ be the number of reaction edges in $\mathscr{E}_c$ that cover $w$. By a previous observation, for any $c \in \{r,b\}$, $\sigma_c(u) = \sigma_c(v')$ and $\sigma_c(u') = \sigma_c(v)$. By definition of the reaction edge $e \defeq \{u,v\}$, $e$ covers $u$ if and only if $e$ covers $v$. Similarly, the reaction edge $e' \defeq \{u',v'\}$ covers $u'$ if and only if $e'$ covers $v'$. Thus, for any $c \in \{r,b\}$, $\rho_c(u)=\rho_c(v)$ and $\rho_c(u')=\rho_c(v')$. Since $\deg_{\mathscr{E}_r}(w) = \deg_{\mathscr{E}_b}(w)$, for any $w \in \{v,u'v'\}$, then $\sigma_r(w) + \rho_r(w) = \sigma_b(w) + \rho_b(w)$ or, equivalently, $\sigma_r(w) - \sigma_b(w) = \rho_b(w) - \rho_r(w)$. Therefore,
\begin{align*}
    k &= \deg_{\mathscr{E}_r}(u) - \deg_{\mathscr{E}_b}(u)\\
    &= \sigma_r(u) + \rho_r(u) - \sigma_b(u) - \rho_b(u)\\
    &= \sigma_r(v') + \rho_r(v) - \sigma_b(v') - \rho_b(v)\\
    &= \rho_b(v') + \sigma_b(v) - \rho_r(v') - \sigma_r(v)\\
    &= \rho_b(u') + \sigma_b(u') - \rho_r(u') - \sigma_r(u')\\
    &= \deg_{\mathscr{E}_b}(u') - \deg_{\mathscr{E}_r}(u')\\
    &= 0.
\end{align*}
This is a contradiction since $k$ is a strictly positive integer so the assumption that $u$ is almost balanced must be false.
\end{proof}

A generalization of a reversible reaction is the notion of a reaction cycle. A \textit{reaction cycle} is a sequence of reactions $y_1 \to \cdots \to y_\ell \to y_1$ such that $\ell \geq 2$ and the complexes $y_1, \ldots, y_\ell$ are distinct. The length of a cycle is the number of reactions belonging to the cycle. Note that if $C$ is a reaction cycle of length 2, then $C$ consists of two reversible reactions. Proposition \ref{reversible_is_balanced} can be generalized to reaction cycles of any finite length.

\begin{prop}
Let $C$ be a reaction cycle of length $\ell \geq 2$ in the chemical reaction network $N$. Then the vertices corresponding to complexes in the reactions on $C$ are not almost balanced.
\end{prop}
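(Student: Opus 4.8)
The plan is to follow the skeleton of the proof of Proposition~\ref{reversible_is_balanced}, but to replace its two-term cancellation with a telescoping sum that runs once around the cycle. First, by Proposition~\ref{swap_colors} a reactant vertex of a reaction is almost balanced if and only if the corresponding product vertex is, so it is enough to rule out the reactant vertices. I would write the cycle as $R_i\colon y_i \to y_{i+1}$ for $i=1,\dots,\ell$, with indices taken mod $\ell$ so that $y_{\ell+1}=y_1$, and let $u_i,v_i\in V(\mathcal H_N)$ be the vertices of $R_i$ coming from $y_i$ and $y_{i+1}$. Borrowing the notation from Proposition~\ref{reversible_is_balanced}, for a $2$-colored multiset $\mathscr E=\mathscr E_r\sqcup\mathscr E_b$ and $c\in\{r,b\}$ let $\sigma_c(w)$ count the species edges of $\mathscr E_c$ covering $w$ and $\rho_c(w)$ the reaction edges, and set $S(w)=\sigma_r(w)-\sigma_b(w)$ and $P(w)=\rho_r(w)-\rho_b(w)$, so that $\delta(w):=\deg_{\mathscr E_r}(w)-\deg_{\mathscr E_b}(w)=S(w)+P(w)$.

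The two structural inputs I would establish are: first, that $S(v_i)=S(u_{i+1})$ for every $i$, because $v_i$ and $u_{i+1}$ both correspond to the complex $y_{i+1}$, so a species edge $E_s$ covers one exactly when $s\in\supp(y_{i+1})$, hence exactly when it covers the other; and second, that $P(u_i)=P(v_i)$ for every $i$, because each cycle vertex lies in exactly one reaction edge, namely $E_i=\{u_i,v_i\}$ for its own reaction, so $\rho_c(u_i)$ and $\rho_c(v_i)$ both equal the $\mathscr E_c$-multiplicity of $E_i$. Assuming some reactant vertex is almost balanced, I may relabel the cycle so that it is $u_1$, giving $\delta(u_1)=k>0$ and $\delta(w)=0$ for every other vertex. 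Then I would consider the telescoping quantity $T=\sum_{i=1}^{\ell}\delta(u_i)-\sum_{i=1}^{\ell}\delta(v_i)$. Summing $\delta$ vertex by vertex gives $T=k$, since $u_1$ is the only vertex contributing; expanding $\delta=S+P$ and using $P(u_i)=P(v_i)$ to cancel all reaction-edge terms leaves $T=\sum_i\bigl(S(u_i)-S(v_i)\bigr)$, and the identity $S(v_i)=S(u_{i+1})$ together with the cyclic reindexing $\sum_i S(u_{i+1})=\sum_i S(u_i)$ forces $T=0$. This contradicts $k>0$.

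I expect the only real obstacle to be the bookkeeping behind the two structural inputs rather than the telescoping itself. The delicate points are that the $2\ell$ cycle vertices are genuinely distinct---guaranteed because the complexes $y_1,\dots,y_\ell$ are distinct and each reaction contributes its own pair of vertices---so that no reaction edge coming from a reaction outside $C$ can cover any of them, and that the index identifications $S(v_i)=S(u_{i+1})$ and $P(u_i)=P(v_i)$ remain valid as the cyclic indices wrap from $\ell$ back to $1$. Once these are checked, the contradiction is immediate, and the case $\ell=2$ recovers Proposition~\ref{reversible_is_balanced}.
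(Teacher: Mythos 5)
Your proof is correct and takes essentially the same route as the paper's: your two structural inputs, $P(u_i)=P(v_i)$ from the reaction edge $E_i$ and $S(v_i)=S(u_{i+1})$ from the shared complex $y_{i+1}$, are exactly the paper's equations relating degrees across a reaction edge and across a shared complex, and both arguments use the almost-balanced hypothesis to force $k=0$. The only differences are cosmetic --- you cancel everything at once with a telescoping sum around the cycle where the paper propagates the equality $\rho_r(v_i)-\rho_b(v_i)=\rho_r(v_{i+1})-\rho_b(v_{i+1})$ link by link, and your reduction to reactant vertices via Proposition~\ref{swap_colors} replaces the paper's appeal to symmetry.
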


\begin{proof}
Let $C = y_1 \to \cdots \to y_\ell \to y_1$ be a reaction cycle of length $\ell \geq 2$ in the chemical reaction network $N = (\mathscr{S}, \mathscr{C}, \mathscr{R})$. We will show the vertices $u_1,v_1, \ldots, u_\ell,v_\ell \in V(\mathcal{H}_N)$ are not almost balanced where $u_i$ and $v_i$ corrrespond to the reactant and product complexes of the reaction $y_i \to y_{i+1}$, respectively, and the indices are taken modulo $\ell$. By the symmetry of the subhypergraph of $\mathcal{H}_N$ induced by $C$, it is enough to show $u_1$ is not almost balanced; see Figure \ref{fig:reaction_cycle}. Seeking a contradiction, suppose $u_1$ is almost balanced. Then there is an edgeset $\mathscr{E}$ such that $u_1$ is almost balanced with respect to the 2-coloring $\mathscr{E} = \mathscr{E}_r \sqcup \mathscr{E}_b$.

\begin{figure}[h]
    \centering
    \begin{tikzpicture}[scale=.75]
        \tikzstyle{vertex}=[inner sep=0mm]
        \node (u1) at (0,0) [vertex] {};
        \node (u2) at (2.71,-.71) [vertex] {};
        \node (u3) at (3.1,-3.44) [vertex] {};
        \node (u4) at (.25,-4.44) [vertex] {};
        \node (u5) at (-1.4,-2.34) [vertex] {};
        \node (v1) at (2,0) [vertex] {};
        \node (v2) at (3.4,-2.34) [vertex] {};
        \node (v3) at (1.75,-4.44) [vertex] {};
        \node (v4) at (-1.1,-3.44) [vertex] {};
        \node (v5) at (-.71,-.71) [vertex] {};
        \node (n) at (-8,-2.44) {};
        
        \begin{scope}[fill opacity = .5]
        \filldraw[fill = pink] ($(v1)+(.6,.3)$)
            to[out=-135,in=-45] ($(v1)+(.3,.3)$)
            to[out=135,in=45] ($(v1)+(-.3,.3)$)
            to[out=-135,in=135] ($(v1)+(-.3,-.3)$)
            to[out=-45,in=135] ($(u2)+(-.3,-.3)$)
            to[out=-45,in=-135] ($(u2)+(.3,-.3)$)
            to[out=45,in=-45] ($(u2)+(.3,.3)$)
            to[out=135,in=-135] ($(u2)+(.3,.6)$);
        \filldraw[fill = yellow] ($(u1)+(-.6,.3)$)
            to[in=-135,out=-45] ($(u1)+(-.3,.3)$)
            to[in=135,out=45] ($(u1)+(.3,.3)$)
            to[in=45,out=-45] ($(u1)+(.3,-.3)$)
            to[in=45,out=-135] ($(v5)+(.3,-.3)$)
            to[in=-45,out=-135] ($(v5)+(-.3,-.3)$)
            to[in=-135,out=135] ($(v5)+(-.3,.3)$)
            to[in=-45,out=45] ($(v5)+(-.3,.6)$);
        \filldraw[fill = magenta] ($(u5)+(-.5,-.4)$)
            to[out=15,in=-75] ($(u5)+(-.4,-.1)$)
            to[out=105,in=195] ($(u5)+(-.1,.4)$)
            to[out=15,in=105] ($(u5)+(.4,.1)$)
            to[out=-75,in=105] ($(v4)+(.4,.1)$)
            to[out=-75,in=15] ($(v4)+(.1,-.4)$)
            to[out=195,in=-75] ($(v4)+(-.4,-.1)$)
            to[out=105,in=15] ($(v4)+(-.66,.2)$);
        \filldraw[fill = orange] ($(u4)+(.4,-.7)$)
            to[out=90,in=0] ($(u4)+(0,-.4)$)
            to[out=180,in=270] ($(u4)+(-.4,0)$)
            to[out=90,in=180] ($(u4)+(0,.4)$)
            to[out=0,in=180] ($(v3)+(0,.4)$)
            to[out=0,in=90] ($(v3)+(.4,0)$)
            to[out=270,in=0] ($(v3)+(0,-.4)$)
            to[out=180,in=90] ($(v3)+(-.4,-.7)$);
        \filldraw[semithick] (u1) -- (v1);
        \filldraw[semithick] (u2) -- (v2);
        \filldraw[semithick] (u3) -- (v3);
        \filldraw[semithick] (u4) -- (v4);
        \filldraw[semithick] (u5) -- (v5);
        \end{scope}
        
        \fill (u1) circle (.075);
        \fill (u2) circle (.075);
        \fill (u4) circle (.075);
        \fill (u5) circle (.075);
        \fill (v1) circle (.075);
        \fill (v3) circle (.075);
        \fill (v4) circle (.075);
        \fill (v5) circle (.075);
        \fill (u1) node [above,yshift=.25cm] {$\scaleobj{.75}{u_1}$};
        \fill (u2) node [right,xshift=.25cm] {$\scaleobj{.75}{u_2}$};
        \fill (u4) node [below, xshift=-.2cm, yshift=-.2cm] {$\scaleobj{.75}{u_{\ell-1}}$};
        \fill (u5) node [left,xshift=-.25cm] {$\scaleobj{.75}{u_{\ell}}$};
        \fill (v1) node [above,yshift=.25cm] {$\scaleobj{.75}{v_1}$};
        \fill (v3) node [below, yshift=-.2cm, xshift=.2cm] {$\scaleobj{.75}{v_{\ell-2}}$};
        \fill (v4) node [left,xshift=-.2cm,yshift=-.2cm] {$\scaleobj{.75}{v_{\ell-1}}$};
        \fill (v5) node [left,xshift=-.25cm] {$\scaleobj{.75}{v_{\ell}}$};
        \fill (n) node 
        {
        $\begin{tikzcd}[column sep = small]
        & y_1 \ar[r] & y_2 \ar[dr] & \\
        y_\ell \ar[ur] & & & \text{} \\
        & y_{\ell-1} \ar[ul] & \text{} \ar[l] &
        \end{tikzcd}$
        };
        \fill (n) node [xshift=1.25cm,yshift=-.5cm] {$\iddots$};
        \fill ($(u1)+(1,.3)$) node {$\scaleobj{.75}{E_1}$};
        \fill ($(u2)+(.66,-.65)$) node {$\scaleobj{.75}{E_2}$};
        \fill ($(v5)+(-.66,-.65)$) node {$\scaleobj{.75}{E_{\ell}}$};
        \fill ($(u4)+(-.85,.3)$) node {$\scaleobj{.75}{E_{\ell-1}}$};
        \fill ($(v3)+(1.1,.3)$) node {$\scaleobj{.75}{E_{\ell-2}}$};
        \fill (3.25,-2.89) node {$\cdot$};
        \fill (3.3,-2.71) node {$\cdot$};
        \fill (3.2,-3.07) node {$\cdot$};
    \end{tikzpicture}
    \caption{A reaction cycle and its realization in $\mathcal{H}_N$}
    \label{fig:reaction_cycle}
\end{figure}
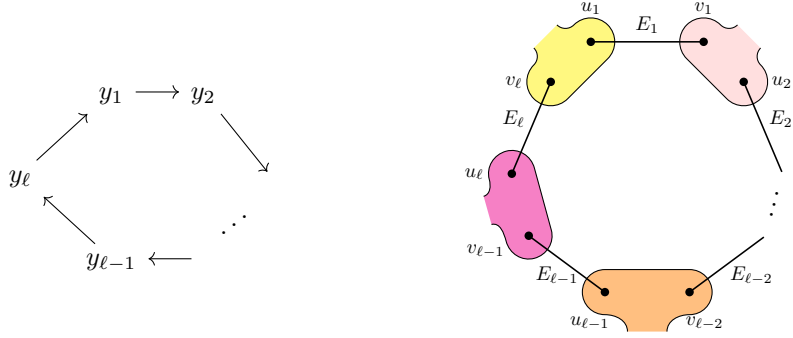

For each color $c \in \{r,b\}$
and vertex $w \in V(\mathcal{H}_N)$, define $\sigma_c(w)$ to be the number of species edges in $\mathscr{E}_c$ that cover $w$. Similarly, define $\rho_c(w)$ to be the number of reaction edges in $\mathscr{E}_c$ that cover $w$. Then the degree of $w$ with respect to $\mathscr{E}_c$ is $\sigma_c(w) + \rho_c(w)$.


Consider the reaction $y_i \to y_{i+1}$ on $C$ where the indices are taken cyclically. Since $y_{i+1}$ is the reactant of the next reaction in the sequence on $C$, then $y_{i+1}$ must be nonempty. Thus, the reaction edge $E_i$ corresponding to $y_i \to y_{i+1}$ is nonempty. Namely, $E_i$ covers both $u_i$ and $v_i$. Necessarily, $\rho_r(u_i) = \rho_r(v_i)$ and $\rho_b(u_i) = \rho_b(v_i)$. This gives rise to the following equation:
\begin{equation}\label{travel_reaction_edge}
    \rho_r(u_i) - \rho_b(u_i) = \rho_r(v_i) - \rho_b(v_i) \; , \quad i = 1, \ldots, \ell
\end{equation}

Let $y_i$ be a complex on $C$. Then $y_i$ is the reactant complex of the reaction $y_i \to y_{i+1}$ and the product complex of the reaction $y_{i-1} \to y_i$. Thus, the vertices $u_i$ and $v_{i-1}$ both correspond to $y_i$. Necessarily, if $s \in \supp{(y_i)}$, then $E_s$ covers both $u_i$ and $v_{i-1}$; see the colored edges in Figure \ref{fig:reaction_cycle}. Thus, $\sigma_r(u_i) = \sigma_r(v_{i-1})$ and $\sigma_b(u_i) = \sigma_b(v_{i-1})$. This gives rise to the following equation:
\begin{equation}\label{travel_species_edge}
    \sigma_b(v_{i-1}) - \sigma_r(v_{i-1}) = \sigma_b(u_{i}) - \sigma_r(u_{i}) \; , \quad i = 1, \ldots, \ell
\end{equation}

Since $u_1$ is almost balanced with respect to $\mathscr{E} = \mathscr{E}_r \sqcup \mathscr{E}_b$, then $\deg_{\mathscr{E}_r}(u_1) = \deg_{\mathscr{E}_b}(u_1) + k$ for some positive integer $k$ and $\deg_{\mathscr{E}_r}(w) = \deg_{\mathscr{E}_b}(w)$ for all $w \in V(\mathcal{H}_N) \setminus \{u_1\}$. For $w \in V(\mathcal{H}_N) \setminus \{u_1\}$, $\sigma_r(w) + \rho_r(w) = \sigma_b(w) + \rho_b(w)$, or equivalently, 
\begin{equation}\label{change_edge}
    \rho_r(w) - \rho_b(w) = \sigma_b(w) - \sigma_r(w) \; , \quad \forall \; w \in V(\mathcal{H}_N) \setminus \{u_1\}
\end{equation}

Consider the vertex $v_i$ for some $i \in \{1, \ldots, \ell\}$. Then 
\begin{align*}
    \rho_r(v_i) - \rho_b(v_i) &\stackrel{\ref{change_edge}}{=} \sigma_b(v_i) - \sigma_r(v_i) \\
    &\stackrel{\ref{travel_species_edge}}{=} \sigma_b(u_{i+1}) - \sigma_r(u_{i+1})\\
    &\stackrel{\ref{change_edge}}{=} \rho_r(u_{i+1}) - \rho_b(u_{i+1}) \\
    &\stackrel{\ref{travel_reaction_edge}}{=} \rho_r(v_{i+1}) - \rho_b(v_{i+1}).
\end{align*}
Written compactly, we have the following equation:
\begin{equation}\label{same_difference}
    \rho_r(v_i) - \rho_b(v_i) = \rho_r(v_{i+1}) - \rho_b(v_{i+1}) \; , \quad i = 1, \ldots, \ell
\end{equation}

We will show $k=0$, a contradiction since $k \in \mathbb{Z}_{>0}$, proving $u_1$ is not almost balanced. We do so by the following sequence of equalities:
\begin{align*}
    k &= \deg_{\mathscr{E}_r}(u_1) - \deg_{\mathscr{E}_b}(u_1) \\
    &= \sigma_r(u_1) + \rho_r(u_1) - \sigma_b(u_1) - \rho_b(u_1) \\
    &= \sigma_r(u_1) - \sigma_b(u_1) + \rho_r(u_1) - \rho_b(u_1) \\
    &\stackrel{\ref{travel_species_edge}}{=} \sigma_r(v_\ell) - \sigma_b(v_\ell) + \rho_r(u_1) - \rho_b(u_1) \\
    &\stackrel{\ref{travel_reaction_edge}}{=} \sigma_r(v_\ell) - \sigma_b(v_\ell) + \rho_r(v_1) - \rho_b(v_1) \\
    &\stackrel{\ref{same_difference}}{=} \sigma_r(v_\ell) - \sigma_b(v_\ell) + \rho_r(v_\ell) - \rho_b(v_\ell) \\
    &= \sigma_r(v_\ell) + \rho_r(v_\ell) - \sigma_b(v_\ell) - \rho_b(v_\ell) \\
    &= \deg_{\mathscr{E}_r}(v_\ell) - \deg_{\mathscr{E}_b}(v_\ell) \\
    &= 0.
\end{align*}
\end{proof}

\begin{corollary}
Let $N$ be a weakly reversible chemical reaction network. No vertex in $V(\mathcal{H}_N)$ is almost balanced.
\end{corollary}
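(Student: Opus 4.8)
The plan is to reduce this corollary to the preceding proposition on reaction cycles. The guiding observation is that in a weakly reversible network every reaction lies on a reaction cycle, so every vertex of $\mathcal{H}_N$ arises from a reaction belonging to some cycle.

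First I would note that, by the definition of the network hypergraph, each vertex of $V(\mathcal{H}_N)$ is of the form $u_j$ or $v_j$ for some reaction $y_j \to y_j' \in \mathscr{R}$; that is, it corresponds to the reactant or the product complex of a specific reaction. Hence it suffices to show that this reaction $y_j \to y_j'$ lies on a reaction cycle $C$, for then the previous proposition guarantees that every vertex coming from a reaction on $C$---in particular both $u_j$ and $v_j$---fails to be almost balanced.

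To build the cycle, I would invoke weak reversibility. The lone reaction $y_j \to y_j'$ is itself a directed path from $y_j$ to $y_j'$, so by the definition of weak reversibility there is a directed path $y_j' = z_1 \to z_2 \to \cdots \to z_p = y_j$ in the reaction graph. Taking this path to be of minimal length ensures the complexes $z_1, \ldots, z_p$ are pairwise distinct, and prepending the reaction $y_j \to y_j'$ produces the closed sequence $y_j \to z_1 \to \cdots \to z_p = y_j$. Since a reaction network has no self-loops, $y_j \neq y_j'$, so the complexes $y_j, z_1, \ldots, z_{p-1}$ are distinct and the resulting reaction cycle $C$ has length at least $2$, as required by the hypothesis of the previous proposition.

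The one point that requires care is the passage from the closed walk furnished by weak reversibility to a bona fide reaction cycle---namely, ensuring its complexes are distinct and its length is at least two. This is exactly what the minimal-length choice of path (distinctness) and the absence of self-loops ($y_j \neq y_j'$, forcing length $\geq 2$) provide. With a cycle $C$ through the reaction $y_j \to y_j'$ in hand, the previous proposition applies verbatim and shows that $u_j$ and $v_j$ are not almost balanced; since the chosen vertex was arbitrary, no vertex of $\mathcal{H}_N$ is almost balanced.
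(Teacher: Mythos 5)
Your argument is correct and is exactly the intended deduction: the paper states this corollary without proof immediately after the reaction-cycle proposition, and the implicit argument is precisely yours---every reaction in a weakly reversible network closes up into a reaction cycle of length at least $2$ (using a minimal-length return path for distinctness and the no-self-loop axiom for $y_j \neq y_j'$), so the proposition applies to every vertex. Your care with the passage from closed walk to genuine cycle is a worthwhile detail that the paper omits.
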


Consider two networks $N = (\mathscr{S}, \mathscr{C}, \mathscr{R})$ and $N' = (\mathscr{S}', \mathscr{C}', \mathscr{R}')$ such that $\mathscr{S} \subseteq \mathscr{S}'$. For complexes $y \in \mathscr{C}$ and $y' \in \mathscr{C}'$, we write $y \mid y'$ if the coefficient of $s$ in $y$ is at most the coefficient of $s$ in $y'$ for each species $s \in \mathscr{S}$. Divides as a relation between complexes matches our intuition of divides in the usual sense: $y \mid y'$ if and only if $x^y \mid x^{y'}$. Similar notation is found in \cite{SS}. A complex $y \in \mathscr{C}$ is said to be \textit{minimal} in the network $N$ if, for any $y' \in \mathscr{C}$, either 1) $y \mid y'$ or 2) $y \nmid y'$ and $y' \nmid y$. 

\begin{theorem}\label{min_react_generate_ideal}
Let $N$ be a 0,1-network and suppose $y_{1}, \ldots, y_{k}$ are the minimal reactants of $N$. If, for all $i = 1, \ldots, k$, either $u_{i}$ or $v_{i}$ in $V(\mathcal{H}_N)$ is almost balanced, then $\mathcal{I}(N)$ is a monomial ideal, and in particular, $\mathcal{I}(N) = \langle x^{y_{1}} , \ldots , x^{y_{k}} \rangle$.
\end{theorem}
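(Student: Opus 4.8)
The plan is to prove the two inclusions $\langle x^{y_1}, \ldots, x^{y_k}\rangle \subseteq \mathcal{I}(N)$ and $\mathcal{I}(N) \subseteq \langle x^{y_1}, \ldots, x^{y_k}\rangle$ and then conclude equality; since the right-hand ideal is generated by monomials, this equality simultaneously establishes that $\mathcal{I}(N)$ is monomial. The forward inclusion is immediate from the hypothesis: for each minimal reactant $y_i$, one of the vertices $u_i, v_i$ of $\mathcal{H}_N$ is almost balanced, so Corollary \ref{monomial_in_ideal} yields $x^{y_i} \in \mathcal{I}(N)$. Because $\mathcal{I}(N)$ is an ideal, it then contains the whole ideal $\langle x^{y_1}, \ldots, x^{y_k}\rangle$.

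For the reverse inclusion I would begin from Proposition \ref{finestideal}, which gives $\mathcal{I}(N) \subseteq \langle x^{y} : y \to y' \in \mathscr{R}\rangle$, the ideal generated by \emph{all} reactant monomials. It therefore suffices to show that every reactant monomial already lies in $\langle x^{y_1}, \ldots, x^{y_k}\rangle$, i.e. that each reactant is divisible by one of the minimal reactants.

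The key combinatorial step is a poset argument. Divisibility is a partial order on the finite set of reactant complexes, and $y_1, \ldots, y_k$ are exactly its minimal elements. Given any reactant $y$, consider the (nonempty, since it contains $y$) set of reactants dividing $y$, and choose an element $z$ that is minimal in this set under divisibility. Any reactant dividing $z$ also divides $y$, so by minimality of $z$ no reactant properly divides $z$; hence $z$ is minimal among all reactants and equals some $y_j$. Since $y_j \mid y$ we get $x^{y_j} \mid x^{y}$, so $x^{y} \in \langle x^{y_j}\rangle \subseteq \langle x^{y_1}, \ldots, x^{y_k}\rangle$. Feeding this back through Proposition \ref{finestideal} gives $\mathcal{I}(N) \subseteq \langle x^{y_1}, \ldots, x^{y_k}\rangle$, and combining with the forward inclusion yields $\mathcal{I}(N) = \langle x^{y_1}, \ldots, x^{y_k}\rangle$.

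The only genuine input is Corollary \ref{monomial_in_ideal} (hence Propositions \ref{almostbalanced} and \ref{prodalmostbalanced}); the rest is bookkeeping and the elementary poset fact that every element of a finite divisibility order dominates a minimal one. The point that most deserves care — and the likeliest source of a gap — is the meaning of \emph{minimal reactant}: for the squeeze to close, $y_1, \ldots, y_k$ must be read as the reactant complexes that are minimal with respect to divisibility \emph{among reactants}, so that the poset argument always produces a minimal reactant dividing a given reactant. It is worth emphasizing that only the minimal reactants are assumed almost balanced; the non-minimal reactant monomials enter the ideal for free via divisibility, which is precisely what makes the argument go through.
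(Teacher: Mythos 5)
Your proposal is correct and follows essentially the same route as the paper's proof: both squeeze $\mathcal{I}(N)$ between $\langle x^{y_1},\ldots,x^{y_k}\rangle$ (via Corollary \ref{monomial_in_ideal}) and the full reactant-monomial ideal (via Proposition \ref{finestideal}), then close the gap by observing that every reactant monomial is divisible by some $x^{y_\ell}$. Your explicit poset argument for why every reactant dominates a minimal one, and your remark that ``minimal'' must be read as minimal among reactants under divisibility, merely fill in details the paper leaves implicit.
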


\begin{proof} Let $N = (\mathscr{S}, \mathscr{C}, \mathscr{R})$ be a 0,1-network and let $y_{1}, \ldots, y_{k}$ be the minimal reactants of $N$. By Proposition \ref{finestideal}, $\mathcal{I}(N) \subseteq \langle x^{y_i} : y_i \to y_i' \in \mathscr{R} \rangle$. Without loss of generality, suppose the vertices $u_{1}, \ldots,  u_{k}$ in $\mathcal{H}(N)$ are almost balanced. By Corollary \ref{monomial_in_ideal}, $x^{y_{i}} \in \mathcal{I}(N)$ for $i = 1, \ldots, k$. Thus, $\langle x^{y_i}, \ldots, x^{y_k} \rangle \subseteq \mathcal{I}(N) \subseteq \langle x^{y_i} : y_i \to y_i' \in \mathscr{R} \rangle$. We show $\langle x^{y_i} : y_i \to y_i' \in \mathscr{R} \rangle \subseteq \langle x^{y_i}, \ldots, x^{y_k} \rangle$. Let $y \to y'$ be any reaction in $\mathscr{R}$. Since $y_1, \ldots, y_k$ are the minimal reactants of $N$, then there is some $\ell \in \{1, \ldots, k\}$ such that $y_\ell \mid y$. Necessarily, $x^{y_\ell} \mid x^y$ and $y-y_\ell$ has nonnegative coefficients. Since $x^{y_\ell} \in \mathcal{I}(N)$, then $x^y = x^{y_\ell}x^{y-y_\ell} \in \langle x^{y_1}, \ldots, x^{y_k} \rangle$. Thus, $\langle x^{y_i} : y_i \to y_i' \in \mathscr{R} \rangle \subseteq \langle x^{y_i}, \ldots, x^{y_k} \rangle$ and, consequently, $\mathcal{I}(N) = \langle x^{y_1}, \ldots, x^{y_k} \rangle$.
\end{proof}

The converse of the previous theorem does not hold. Consider the following 0,1-network: $$\begin{tikzcd}[column sep = scriptsize] A \ar[r,"\kappa_1",shift left] & B \ar[l,"\kappa_2",shift left] \ar[r,"\kappa_3"] & C \end{tikzcd}$$
This network has minimal reactant complexes $A$ and $B$ and has monomial steady state ideal $\langle x_A , x_B \rangle$. While the minimal reactant $B$ corresponds to an almost balanced vertex in $V(\mathcal{H})$, the minimal reactant $A$ does not. The minimal reactant $A$ does not correspond to an almost balanced vertex by Proposition \ref{reversible_is_balanced}. 

\section{Ideal Preserving Operations}

As seen previously, two different networks can have the same steady state ideal. As another example, Figure \ref{fig:same_ideal_networks} depicts two chemical reaction networks with the same steady state ideal. In this chapter, we discuss three ways that a chemical reaction network can be modified so that the network ideal is preserved. The operations on the network we consider are: adding a single species to the reactant complex of a reaction, adding a single species to the product complex of a reaction, and adding a single reaction.

\begin{figure}[!htb]
    \begin{center}
    \begin{tikzcd}[row sep = tiny, column sep = small]
    N_1 : & A \ar[r] & \varnothing &&& N_2:  & A \ar[r, shift left] & B+C \ar[l, shift left]\\
    & B \ar[r] & \varnothing &&&&  B \ar[r, shorten >= 2ex] & C
    \end{tikzcd}
    \end{center}
    \caption{Two chemical reaction networks with steady state ideal $\langle x_A, x_B \rangle$.}
    \label{fig:same_ideal_networks}
\end{figure}
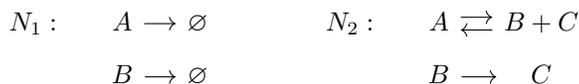

The goal of this chapter is to preserve the steady state ideal of a chemical reaction network, we seek to preserve the generators of the steady state ideal upon performing one of the three previously mentioned operations. In particular, if there are monomial generators in the ideal, then we wish to preserve these monomials. In Chapter \ref{Combinatorics_of_CRNs}, we saw that almost balanced vertices in the network hypergraph imply the existence of monomials in the steady state ideal. Thus, preserving monomials in the steady state ideal can be achieved by preserving almost balanced vertices in the network hypergraph.

First, let us illustrate how changes in the network affect its associated hypergraph. Observe that from Figure \ref{fig:same_ideal_networks}, $N_2$ is obtained from $N_1$ by the following sequence of operations: 1) add $C$ to the product of the reaction $A \to \varnothing$, 2) add $C$ to the product of the reaction $B \to \varnothing$, 3) add $B$ to the product of the reaction $A \to C$, and 4) add the reaction $B+C \to A$. In general, we see that adding to a network will introduce vertices or edges in the hypergraph or cause edges to ``expand'', that is, the edges present in the hypergraph will cover more vertices in the newly obtained hypergraph; see Figure \ref{fig:hypergraph_changes}.

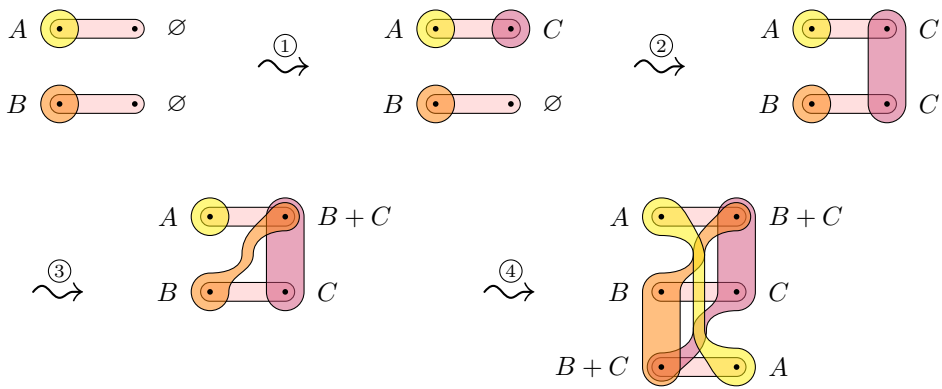
\begin{figure}[!htb]
    \captionsetup{justification=centering,margin=2cm}
    \begin{center}
    \begin{tikzpicture}[scale=.5]
    \node (A1) at (-4,0) {};
    \node (A2) at (6,0) {};
    \node (A3) at (16,0) {};
    \node (A4) at (0,-5) {};
    \node (A5) at (12,-5) {};
    \node (A6) at (14,-9) {};
    \node (B1) at (-4,-2) {};
    \node (B2) at (6,-2) {};
    \node (B3) at (16,-2) {};
    \node (B4) at (0,-7) {};
    \node (B5) at (12,-7) {};
    \node (C1) at (8,0) {};
    \node (C2) at (18,0) {};
    \node (C3) at (18,-2) {};
    \node (C4) at (2,-7) {};
    \node (C5) at (14,-7) {};
    \node (01) at (-2,0) {};
    \node (02) at (-2,-2) {};
    \node (03) at (8,-2) {};
    \node (BC1) at (2,-5) {};
    \node (BC2) at (14,-5) {};
    \node (BC3) at (12,-9) {};
    \node (M) at (1,-6) {};
    \node (TM) at (13,-6) {};
    \node (BM) at (13,-8) {};
    \node (ARR1) at (2,-1) {};
    \node (ARR2) at (12,-1) {};
    \node (ARR3) at (-4,-6) {};
    \node (ARR4) at (8,-7) {};
    
    \begin{scope}[fill opacity=.5]
    \filldraw[fill=pink] 
        ($(A1) + (0,0.25)$)
        to[out=0,in=180] ($(01) + (0,0.25)$)
        to[out=0,in=90] ($(01) + (0.25,0)$)
        to[out=270,in=0] ($(01) + (0,-0.25)$)
        to[out=180,in=0] ($(A1) + (0,-0.25)$)
        to[out=180,in=270] ($(A1) + (-0.25,0)$)
        to[out=90,in=180] ($(A1) + (0,0.25)$);
    \filldraw[fill=pink] 
        ($(B1) + (0,0.25)$)
        to[out=0,in=180] ($(02) + (0,0.25)$)
        to[out=0,in=90] ($(02) + (0.25,0)$)
        to[out=270,in=0] ($(02) + (0,-0.25)$)
        to[out=180,in=0] ($(B1) + (0,-0.25)$)
        to[out=180,in=270] ($(B1) + (-0.25,0)$)
        to[out=90,in=180] ($(B1) + (0,0.25)$);
    \filldraw[fill=yellow] 
    ($(A1) + (0,0.5)$)
        to[out=0,in=90] ($(A1) + (.5,0)$)
        to[out=270,in=0] ($(A1) + (0,-0.5)$)
        to[out=180,in=270] ($(A1) + (-0.5,0)$)
        to[out=90,in=180] ($(A1) + (0,0.5)$);
    \filldraw[fill=orange] 
    ($(B1) + (0,0.5)$)
        to[out=0,in=90] ($(B1) + (.5,0)$)
        to[out=270,in=0] ($(B1) + (0,-0.5)$)
        to[out=180,in=270] ($(B1) + (-0.5,0)$)
        to[out=90,in=180] ($(B1) + (0,0.5)$);
        
    \filldraw[fill=pink] 
        ($(A2) + (0,0.25)$)
        to[out=0,in=180] ($(C1) + (0,0.25)$)
        to[out=0,in=90] ($(C1) + (0.25,0)$)
        to[out=270,in=0] ($(C1) + (0,-0.25)$)
        to[out=180,in=0] ($(A2) + (0,-0.25)$)
        to[out=180,in=270] ($(A2) + (-0.25,0)$)
        to[out=90,in=180] ($(A2) + (0,0.25)$);
    \filldraw[fill=pink] 
        ($(B2) + (0,0.25)$)
        to[out=0,in=180] ($(03) + (0,0.25)$)
        to[out=0,in=90] ($(03) + (0.25,0)$)
        to[out=270,in=0] ($(03) + (0,-0.25)$)
        to[out=180,in=0] ($(B2) + (0,-0.25)$)
        to[out=180,in=270] ($(B2) + (-0.25,0)$)
        to[out=90,in=180] ($(B2) + (0,0.25)$);
    \filldraw[fill=yellow] 
    ($(A2) + (0,0.5)$)
        to[out=0,in=90] ($(A2) + (.5,0)$)
        to[out=270,in=0] ($(A2) + (0,-0.5)$)
        to[out=180,in=270] ($(A2) + (-0.5,0)$)
        to[out=90,in=180] ($(A2) + (0,0.5)$);
    \filldraw[fill=orange] 
    ($(B2) + (0,0.5)$)
        to[out=0,in=90] ($(B2) + (.5,0)$)
        to[out=270,in=0] ($(B2) + (0,-0.5)$)
        to[out=180,in=270] ($(B2) + (-0.5,0)$)
        to[out=90,in=180] ($(B2) + (0,0.5)$);
    \filldraw[fill=purple!70!white] 
    ($(C1) + (0,0.5)$)
        to[out=0,in=90] ($(C1) + (.5,0)$)
        to[out=270,in=0] ($(C1) + (0,-0.5)$)
        to[out=180,in=270] ($(C1) + (-0.5,0)$)
        to[out=90,in=180] ($(C1) + (0,0.5)$);
    
    \filldraw[fill=pink] 
        ($(A3) + (0,0.25)$)
        to[out=0,in=180] ($(C2) + (0,0.25)$)
        to[out=0,in=90] ($(C2) + (0.25,0)$)
        to[out=270,in=0] ($(C2) + (0,-0.25)$)
        to[out=180,in=0] ($(A3) + (0,-0.25)$)
        to[out=180,in=270] ($(A3) + (-0.25,0)$)
        to[out=90,in=180] ($(A3) + (0,0.25)$);
    \filldraw[fill=pink] 
        ($(B3) + (0,0.25)$)
        to[out=0,in=180] ($(C3) + (0,0.25)$)
        to[out=0,in=90] ($(C3) + (0.25,0)$)
        to[out=270,in=0] ($(C3) + (0,-0.25)$)
        to[out=180,in=0] ($(B3) + (0,-0.25)$)
        to[out=180,in=270] ($(B3) + (-0.25,0)$)
        to[out=90,in=180] ($(B3) + (0,0.25)$);
    \filldraw[fill=yellow] 
    ($(A3) + (0,0.5)$)
        to[out=0,in=90] ($(A3) + (.5,0)$)
        to[out=270,in=0] ($(A3) + (0,-0.5)$)
        to[out=180,in=270] ($(A3) + (-0.5,0)$)
        to[out=90,in=180] ($(A3) + (0,0.5)$);
    \filldraw[fill=orange] 
    ($(B3) + (0,0.5)$)
        to[out=0,in=90] ($(B3) + (.5,0)$)
        to[out=270,in=0] ($(B3) + (0,-0.5)$)
        to[out=180,in=270] ($(B3) + (-0.5,0)$)
        to[out=90,in=180] ($(B3) + (0,0.5)$);
    \filldraw[fill=purple!70!white] 
    ($(C2) + (0,0.5)$)
        to[out=0,in=90] ($(C2) + (.5,0)$)
        to[out=270,in=90] ($(C3) + (.5,0)$)
        to[out=270,in=0] ($(C3) + (0,-0.5)$)
        to[out=180,in=270] ($(C3) + (-0.5,0)$)
        to[out=90,in=270] ($(C2)+(-.5,0)$)
        to[out=90,in=180] ($(C2) + (0,0.5)$);
        
    \filldraw[fill=pink] 
        ($(A4) + (0,0.25)$)
        to[out=0,in=180] ($(BC1) + (0,0.25)$)
        to[out=0,in=90] ($(BC1) + (0.25,0)$)
        to[out=270,in=0] ($(BC1) + (0,-0.25)$)
        to[out=180,in=0] ($(A4) + (0,-0.25)$)
        to[out=180,in=270] ($(A4) + (-0.25,0)$)
        to[out=90,in=180] ($(A4) + (0,0.25)$);
    \filldraw[fill=pink] 
        ($(B4) + (0,0.25)$)
        to[out=0,in=180] ($(C4) + (0,0.25)$)
        to[out=0,in=90] ($(C4) + (0.25,0)$)
        to[out=270,in=0] ($(C4) + (0,-0.25)$)
        to[out=180,in=0] ($(B4) + (0,-0.25)$)
        to[out=180,in=270] ($(B4) + (-0.25,0)$)
        to[out=90,in=180] ($(B4) + (0,0.25)$);
    \filldraw[fill=yellow] 
    ($(A4) + (0,0.5)$)
        to[out=0,in=90] ($(A4) + (.5,0)$)
        to[out=270,in=0] ($(A4) + (0,-0.5)$)
        to[out=180,in=270] ($(A4) + (-0.5,0)$)
        to[out=90,in=180] ($(A4) + (0,0.5)$);
    \filldraw[fill=purple!70!white] 
    ($(BC1) + (0,0.5)$)
        to[out=0,in=90] ($(BC1) + (.5,0)$)
        to[out=270,in=90] ($(C4) + (.5,0)$)
        to[out=270,in=0] ($(C4) + (0,-0.5)$)
        to[out=180,in=270] ($(C4) + (-0.5,0)$)
        to[out=90,in=270] ($(BC1)+(-.5,0)$)
        to[out=90,in=180] ($(BC1) + (0,0.5)$);
    \filldraw[fill=orange] 
    ($(B4) + (0,0.5)$)
        to[out=0,in=270] ($(M)+(-.15,0)$)
        to[out=90,in=230] ($(BC1)+(-.375,.2)$)
        to[out=45,in=180] ($(BC1)+(0,.375)$)
        to[out=0,in=90] ($(BC1)+(.375,0)$)
        to[out=270,in=0] ($(BC1)+(0,-.375)$)
        to[out=180,in=90] ($(M)+(.15,0)$)
        to[out=270,in=90] ($(B4) + (.5,0)$)
        to[out=270,in=0] ($(B4)+(0,-.5)$)
        to[out=180,in=270] ($(B4)+(-.5,0)$)
        to[out=90,in=180] ($(B4) + (0,0.5)$);
        
    \filldraw[fill=pink] 
        ($(A5) + (0,0.25)$)
        to[out=0,in=180] ($(BC2) + (0,0.25)$)
        to[out=0,in=90] ($(BC2) + (0.25,0)$)
        to[out=270,in=0] ($(BC2) + (0,-0.25)$)
        to[out=180,in=0] ($(A5) + (0,-0.25)$)
        to[out=180,in=270] ($(A5) + (-0.25,0)$)
        to[out=90,in=180] ($(A5) + (0,0.25)$);
    \filldraw[fill=pink] 
        ($(B5) + (0,0.25)$)
        to[out=0,in=180] ($(C5) + (0,0.25)$)
        to[out=0,in=90] ($(C5) + (0.25,0)$)
        to[out=270,in=0] ($(C5) + (0,-0.25)$)
        to[out=180,in=0] ($(B5) + (0,-0.25)$)
        to[out=180,in=270] ($(B5) + (-0.25,0)$)
        to[out=90,in=180] ($(B5) + (0,0.25)$);
    \filldraw[fill=pink] 
        ($(BC3) + (0,0.25)$)
        to[out=0,in=180] ($(A6) + (0,0.25)$)
        to[out=0,in=90] ($(A6) + (0.25,0)$)
        to[out=270,in=0] ($(A6) + (0,-0.25)$)
        to[out=180,in=0] ($(BC3) + (0,-0.25)$)
        to[out=180,in=270] ($(BC3) + (-0.25,0)$)
        to[out=90,in=180] ($(BC3) + (0,0.25)$);
    \filldraw[fill=purple!70!white] 
    ($(BC2) + (0,0.5)$)
        to[out=0,in=90] ($(BC2) + (.5,0)$)
        to[out=270,in=90] ($(C5)+(.5,0)$)
        to[out=270,in=0] ($(C5) + (0,-0.5)$)
        to[out=180,in=90] ($(BM)+(.2,0)$)
        to[out=270,in=45] ($(BC3)+(.375,-.2)$)
        to[out=230,in=0] ($(BC3)+(0,-.375)$)
        to[out=180,in=270] ($(BC3)+(-.375,0)$)
        to[out=90,in=180] ($(BC3)+(0,.375)$)
        to[out=0,in=270] ($(BM)+(-.15,0)$)
        to[out=90,in=270] ($(C5) + (-0.5,0)$)
        to[out=90,in=270] ($(BC2)+(-.5,0)$)
        to[out=90,in=180] ($(BC2) + (0,0.5)$);
    \filldraw[fill=orange] 
    ($(B5) + (0,0.5)$)
        to[out=0,in=270] ($(TM)+(-.15,0)$)
        to[out=90,in=230] ($(BC2)+(-.375,.2)$)
        to[out=45,in=180] ($(BC2)+(0,.375)$)
        to[out=0,in=90] ($(BC2)+(.375,0)$)
        to[out=270,in=0] ($(BC2)+(0,-.375)$)
        to[out=180,in=90] ($(TM)+(.15,0)$)
        to[out=270,in=90] ($(B5) + (.5,0)$)
        to[out=270,in=90] ($(BC3)+(.5,0)$)
        to[out=270,in=0] ($(BC3) + (0,-0.5)$)
        to[out=180,in=270] ($(BC3) + (-0.5,0)$)
        to[out=90,in=270] ($(B5)+(-.5,0)$)
        to[out=90,in=180] ($(B5) + (0,0.5)$);
    \filldraw[fill=yellow] 
    ($(A5) + (0,0.5)$)
        to[out=0,in=120] ($(A5) + (.7,0)$)
        to[out=300,in=100] ($(TM)+(.15,0)$)
        to[out=270,in=90] ($(BM)+(.15,0)$)
        to[out=270,in=180] ($(A6)+(0,.5)$)
        to[out=0,in=90] ($(A6)+(.5,0)$)
        to[out=270,in=0] ($(A6)+(0,-.5)$)
        to[out=180,in=300] ($(A6)+(-.7,0)$)
        to[out=120,in=270] ($(BM)+(-.15,0)$)
        to[out=90,in=270] ($(TM)+(-.15,0)$)
        to[out=90,in=0] ($(A5) + (0,-0.5)$)
        to[out=180,in=270] ($(A5) + (-0.5,0)$)
        to[out=90,in=180] ($(A5) + (0,0.5)$);
    \end{scope}
    
    \fill (A1) circle (.075);
    \fill (A2) circle (.075);
    \fill (A3) circle (.075);
    \fill (A4) circle (.075);
    \fill (A5) circle (.075);
    \fill (A6) circle (.075);
    \fill (B1) circle (.075);
    \fill (B2) circle (.075);
    \fill (B3) circle (.075);
    \fill (B4) circle (.075);
    \fill (B5) circle (.075);
    \fill (C1) circle (.075);
    \fill (C2) circle (.075);
    \fill (C3) circle (.075);
    \fill (C4) circle (.075);
    \fill (C5) circle (.075);
    \fill (01) circle (.075);
    \fill (02) circle (.075);
    \fill (03) circle (.075);
    \fill (BC1) circle (.075);
    \fill (BC2) circle (.075);
    \fill (BC3) circle (.075);
    
    \fill (ARR1) node [above,yshift=-.3cm] {$\stackrel{\encircled{1}}{\scaleobj{2}{\leadsto}}$};
    \fill (ARR2) node [above,yshift=-.3cm] {$\stackrel{\encircled{2}}{\scaleobj{2}{\leadsto}}$};
    \fill (ARR3) node [above,yshift=-.8cm] {$\stackrel{\encircled{3}}{\scaleobj{2}{\leadsto}}$};
    \fill (ARR4) node [above,yshift=-.3cm] {$\stackrel{\encircled{4}}{\scaleobj{2}{\leadsto}}$};
    
    \fill (A1) node [left,xshift=-.3cm] {$A$};
    \fill (B1) node [left,xshift=-.3cm] {$B$};
    \fill (01) node [right,xshift=.3cm] {$\varnothing$};
    \fill (02) node [right,xshift=.3cm] {$\varnothing$};
    \fill (A2) node [left,xshift=-.3cm] {$A$};
    \fill (B2) node [left,xshift=-.3cm] {$B$};
    \fill (C1) node [right,xshift=.3cm] {$C$};
    \fill (03) node [right,xshift=.3cm] {$\varnothing$};
    \fill (A3) node [left,xshift=-.3cm] {$A$};
    \fill (B3) node [left,xshift=-.3cm] {$B$};
    \fill (C2) node [right,xshift=.3cm] {$C$};
    \fill (C3) node [right,xshift=.3cm] {$C$};
    \fill (A4) node [left,xshift=-.3cm] {$A$};
    \fill (B4) node [left,xshift=-.3cm] {$B$};
    \fill (BC1) node [right,xshift=.3cm] {$B+C$};
    \fill (C4) node [right,xshift=.3cm] {$C$};
    \fill (A5) node [left,xshift=-.3cm] {$A$};
    \fill (B5) node [left,xshift=-.3cm] {$B$};
    \fill (BC3) node [left,xshift=-.3cm] {$B+C$};
    \fill (BC2) node [right,xshift=.3cm] {$B+C$};
    \fill (C5) node [right,xshift=.3cm] {$C$};
    \fill (A6) node [right,xshift=.3cm] {$A$};
    \end{tikzpicture}
    \end{center}
    \caption{A sequence of network hypergraphs after preforming four network operations.}
    \label{fig:hypergraph_changes}
\end{figure}

In each hypergraph from Figure \ref{fig:hypergraph_changes}, the vertex labeled $B$ remains almost balanced: for the first three hypergraphs, $E_B$ is a singleton so take $\mathscr{E}_r = \{E_B\}$ and $\mathscr{E}_b = \varnothing$, for the fourth hypergraph, take $\mathscr{E}_r = \{E_A,E_B\}$ and $\mathscr{E}_b = \{E_1\}$ where $E_1$ corresponds to the reaction $A \to B+C$, and for the fifth hypergraph, take $\mathscr{E}_r = \{E_A,E_B\}$ and $\mathscr{E}_b = \{E_1,E_3\}$ where $E_3$ corresponds to the reaction $B+C \to A$. Therefore, the sequence of operations preformed on $N_1$ preserves the fact that $B$ is almost balanced. Hence, the monomial $x_B$ is in the steady state ideal of each corresponding network. The same, however, cannot be said about $x_A$. The vertex whose label is $A$ only remains almost balanced after the first three operations since $E_A$ is a singleton but is not almost balanced after the fourth operation. Thus, $x_A$ is contained in the steady state ideal of $N_2$ for other reasons that cannot be detected by the almost balanced condition. Indeed, Proposition \ref{reversible_is_balanced} confirms that the vertices covered by $E_1$ corresponding to the complex $A$ and $B+C$ are not almost balanced in the network hypergraph associated to $N_2$. 

The operations we consider are always done reaction-wise. Thus, we need to discuss how to add and remove reactions. Removing the reaction $y \to y'$ belonging to the network $N$ will be denoted $N \setminus \{y \to y'\}$. Adding any reaction $y \to y'$ to the network $N$ will be denoted $N \cup \{y \to y'\}$. From the definition of a chemical reaction network, every complex must belong to some reaction. Then if deleting the reaction $y \to y'$ from $N$ results to either $y$ or $y'$ no longer belonging to some reaction, then they too must be deleted in the process. On the other hand, when adding an arbitrary reaction $y \to y'$ to the network $N$, it is possible that neither $y$ nor $y'$ are complexes in the original network $N$. In such a case, both the reaction and complexes are added to the network. We can think of adding a species $s$ to a reactant complex as deleting and then adding a reaction, for example, adding a species $s$ to the reactant complex $y$ belonging to the reaction $y \to y'$ can now be denoted by $(N \setminus \{y \to y'\}) \cup \{y+s \to y'\}$. As seen in Figure \ref{fig:add_species_to_reactant}, there are two cases to consider: 1) $y$ belongs to one reaction or 2) $y$ belongs to more than one reaction. The main distinction with the two cases is that if $y$ belongs to only one reaction, then $y$ must be deleted and replaced with $y+s$, meaning $y$ by itself no longer appears as a complex in the network, whereas if $y$ belongs to more than one reaction then $y$ will remain as a complex in the network. Also, note that in either of the cases above, $y+s$ may or may not have been a complex in the network prior to adding the species $s$. If $y+s$ is already a complex, then by adding $s$ to $y$, we do not require adding a new complex to the network, only a new reaction. If not, then $y+s$ must be added to the set of complexes. The addition of a species to a product complex is done similarly.

\begin{figure}[!htb]
    \begin{center}
    \begin{tikzpicture}[scale=.75]
    \node (n1) at (0,0) {};
    \node (n2) at (8,0) {};
    \node (n3) at (0,-3) {};
    \node (n4) at (8,-3) {};
    \node (a1) at (4,0) {};
    \node (a2) at (4,-3) {};
    \node (t) at (4,-1.5) {};
    
    \fill (n1) node {
    $\begin{tikzcd}[column sep = small,row sep = small] & & \ar[ld]\\
    y \ar[r] & y' \ar[r] & \text{}\\
    \end{tikzcd}$
    };
    \fill (n2) node {
    $\begin{tikzcd}[column sep = small,row sep = small] \ar[rd] & & & \ar[ld]\\
    \text{} & y+s \ar[r] \ar[l] & y' \ar[r] & \text{}
    \end{tikzcd}$
    };
    \fill (n3) node {
    $\begin{tikzcd}[column sep = small,row sep = small] \ar[rd] & & & \ar[ld]\\
    \text{} & y \ar[r, shift left] \ar[l]  & y' \ar[l, shift left] \ar[r] & \text{}\\
    \end{tikzcd}$
    };
    \fill (n4) node[yshift=-.5cm] {
    $\begin{tikzcd}[column sep = small,row sep = small] \ar[rd] & & & \ar[ld]\\
    \text{} & y \ar[l] & y' \ar[l] \ar[r] & \text{}\\
    \text{} \ar[r] & y+s \ar[ru] \ar[ld] & & \\
    \text{} & & &
    \end{tikzcd}$
    };
    \fill (a1) node {$\scaleobj{2}{\leadsto}$};
    \fill (a2) node {$\scaleobj{2}{\leadsto}$};
    \fill (t) node {Add $s$};
    \fill ($(n1)+(-3,0)$) node {\textit{Case 1:}};
    \fill ($(n3)+(-3,0)$) node {\textit{Case 2:}};
    \end{tikzpicture}
    \end{center}
    \caption{Examples of the addition of species $s$ to the complex $y$.}
    \label{fig:add_species_to_reactant}
\end{figure}
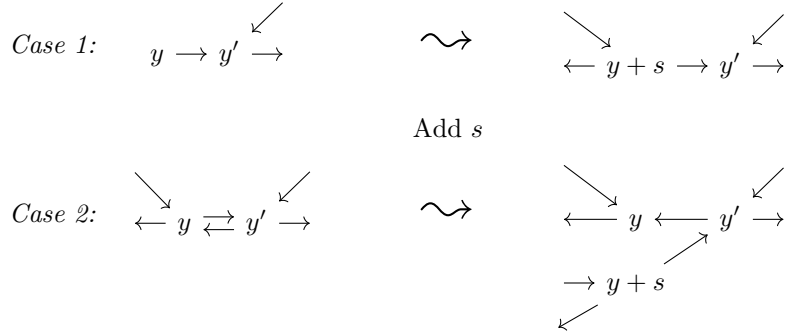

\begin{rmk} \label{rmk:equality}
When obtaining a network $N' = (\mathscr{S}', \mathscr{C}', \mathscr{R}')$ from a network $N = (\mathscr{S}, \mathscr{C}, \mathscr{R})$, it is possible that $N'$ contains a species that $N$ does not, that is, $\mathscr{S} \subsetneq \mathscr{S}'$. Thus, $\mathcal{I}(N)$ and $\mathcal{I}(N')$ can reside in different rings. Namely, $\mathcal{I}(N)$ is an ideal of the ring $\mathbb{Q}(\kappa_1, \ldots, \kappa_{n_\mathscr{R}})[x_1, \ldots, x_{n_\mathscr{S}}]$ and, if $N'$ is obtained from $N$ by adding a species to a complex in the $i$th reaction, then $\mathcal{I}(N')$ is an ideal of the ring $\mathbb{Q}(\kappa_1, \ldots, \kappa_{i-1}, \kappa_{i+1}, \ldots, \kappa_{n_\mathscr{R}}, \kappa_i')[x_1, \ldots, x_{n_\mathscr{S'}}]$ where $\kappa_i'$ is the rate constant for the newly added reaction. Otherwise, if $N'$ is obtained from $N$ be adding a reaction, then $\mathcal{I}(N') \subseteq \mathbb{Q}(\kappa_1, \ldots, \kappa_{n_\mathscr{R}}, \kappa_i')[x_1, \ldots, x_{n_\mathscr{S'}}]$. In either case, when we write $\mathcal{I}(N') = \mathcal{I}(N)$, we mean they are equivalent in the larger ring $
\mathbb{Q}(\kappa_1, \ldots, \kappa_{n_\mathscr{R}}, \kappa_i')[x_1, \ldots, x_{n_\mathscr{S'}}]$.
\end{rmk}

We now present the first ideal preserving operation of the paper.


\begin{theorem}\label{addproduct}(Adding species to a product complex)
Let $N$ be a 0,1-network and suppose $u_{i} \in V(\mathcal{H}_N)$ is almost balanced with respect to the 2-colored multiset $\mathscr{E} = \mathscr{E}_r \sqcup \mathscr{E}_b$ over the edges in $E(\mathcal{H}_N)$. Suppose $s_k$ is a species such that $s_k \not \in \mathrm{supp}(y_i')$, and let $N' = (N \setminus \{y_i \to y_i'\}) \cup \{y_i \to y_i' + s_k\}$.  If $E_{s_k} \not \in \mathscr{E}$, then $\mathcal{I}(N') = \mathcal{I}(N)$.
\end{theorem}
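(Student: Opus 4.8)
The plan is to show that $\mathcal{I}(N)$ and $\mathcal{I}(N')$ have the same generators up to the single monomial $x^{y_i}$, and that $x^{y_i}$ lies in both ideals. First I would record how the operation changes the steady-state polynomials. Adding $s_k$ to the product complex of reaction $i$ leaves the reactant $y_i$, and hence the monomial $x^{y_i}$, untouched; it alters only the net change of $s_k$ across reaction $i$, raising it by exactly $1$ (this uses $s_k \notin \supp(y_i')$ together with the fact that $N$ is a $0,1$-network, and it holds whether or not $s_k \in \supp(y_i)$). Every other net change, and therefore every monomial appearing in every $\dot{x}_s$, is unchanged. Writing $\dot{x}_s'$ for the steady-state polynomials of $N'$, it follows that for each species $s$ the difference $\dot{x}_s - \dot{x}_s'$ is a field-scalar multiple of $x^{y_i}$; in the case that $s_k$ is a new species, the one extra generator is simply $\dot{x}_{s_k}' = \kappa_i' x^{y_i}$. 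The relabeling $\kappa_i \mapsto \kappa_i'$ from Remark \ref{rmk:equality} only rescales these coefficients and is therefore harmless.

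Granting for the moment that $x^{y_i} \in \mathcal{I}(N) \cap \mathcal{I}(N')$, the theorem follows at once. Each generator $\dot{x}_s$ of $\mathcal{I}(N)$ equals $\dot{x}_s'$ plus a multiple of $x^{y_i}$, and since both $\dot{x}_s' \in \mathcal{I}(N')$ and $x^{y_i} \in \mathcal{I}(N')$, we get $\dot{x}_s \in \mathcal{I}(N')$, so $\mathcal{I}(N) \subseteq \mathcal{I}(N')$; the reverse containment is symmetric, where the only additional generator to check (when $s_k$ is new) is $\dot{x}_{s_k}' = \kappa_i' x^{y_i} \in \mathcal{I}(N)$. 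Thus the entire argument reduces to placing $x^{y_i}$ in both ideals, all in the common ring of Remark \ref{rmk:equality}. For $\mathcal{I}(N)$ this is immediate: $u_i$ is almost balanced in $\mathcal{H}_N$, so Corollary \ref{monomial_in_ideal} gives $x^{y_i} \in \mathcal{I}(N)$.

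The hard part will be showing $x^{y_i} \in \mathcal{I}(N')$, which I would obtain by proving that $u_i$ remains almost balanced in $\mathcal{H}_{N'}$, witnessed by the \emph{same} multiset $\mathscr{E} = \mathscr{E}_r \sqcup \mathscr{E}_b$; here the hypothesis $E_{s_k} \notin \mathscr{E}$ is essential. I would first describe the passage from $\mathcal{H}_N$ to $\mathcal{H}_{N'}$: because only the $s_k$-component of $y_i'$ changes, the sole edge affected is the species edge $E_{s_k}$, which acquires the vertex $v_i$ (and is created anew if $s_k$ was not previously a species), while the reaction edge $E_i = \{u_i, v_i\}$ and every species edge $E_s$ with $s \neq s_k$ are unchanged as vertex sets. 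Since no edge occurring in $\mathscr{E}$ equals $E_{s_k}$, every edge of $\mathscr{E}$ is unchanged, so $\deg_{\mathscr{E}_r}(w)$ and $\deg_{\mathscr{E}_b}(w)$ agree in $\mathcal{H}_N$ and $\mathcal{H}_{N'}$ for every vertex $w$. Hence the two almost-balanced conditions for $u_i$ transfer verbatim, with the same positive integer $k$, and Corollary \ref{monomial_in_ideal} applied to $N'$ yields $x^{y_i} \in \mathcal{I}(N')$. Combining this with the reduction above completes the proof. The main obstacle is therefore purely combinatorial, namely confirming that the edits to the hypergraph are confined to $E_{s_k}$ and so are invisible to the witness $\mathscr{E}$, rather than algebraic.
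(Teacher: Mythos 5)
Your proposal is correct and follows essentially the same route as the paper's proof: compute that the only change to the steady-state polynomials is the addition of $\kappa_i' x^{y_i}$ to $\dot{x}_{s_k}$, use Corollary \ref{monomial_in_ideal} to place $x^{y_i}$ in $\mathcal{I}(N)$, and observe that since the passage from $\mathcal{H}_N$ to $\mathcal{H}_{N'}$ only extends $E_{s_k}$ (which is absent from $\mathscr{E}$), the vertex $u_i$ remains almost balanced with the same witness, giving $x^{y_i} \in \mathcal{I}(N')$. Your explicit handling of the $\kappa_i \mapsto \kappa_i'$ relabeling by reducing every difference $\dot{x}_s - \dot{x}_s'$ to a scalar multiple of $x^{y_i}$ is a small tidiness improvement over the paper's statement that $\dot{x}_s' = \dot{x}_s$ for $s \neq s_k$, but the argument is the same.
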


\begin{proof}
Let $N = (\mathscr{S}, \mathscr{C}, \mathscr{R})$ be a 0,1-network and let $u_{i} \in V(\mathcal{H}_N)$ be almost balanced with respect to the 2-colored multiset $\mathscr{E} = \mathscr{E}_r \sqcup \mathscr{E}_b$ over the edges in $E(\mathcal{H}_N)$. Suppose $s_k$ is a species such that $s_k \not \in \mathrm{supp}(y_i')$ and $E_{s_k} \not \in \mathscr{E}$. Obtain a network $N' = (\mathscr{S}', \mathscr{C}', \mathscr{R}')$ from $N$ by removing the reaction $\begin{tikzcd}[column sep = small] y_i \ar[r,"\kappa_{i}"] & y_i' \end{tikzcd}$ and adding the reaction $\begin{tikzcd}[column sep = small] y_i \ar[r,"\kappa_{i}'"] & y_i'+s_k \end{tikzcd}$. Since $s_k \not \in \mathrm{supp}(y_i')$, then the coefficient of $s_k$ in the complex $y_i'+s_k$ is 1. Hence, $N'$ is a 0,1-network.

The modification of $N$ by a single species implies that $\mathscr{S} \subseteq \mathscr{S}'$. For $\ell = 1, \ldots, n_\mathscr{S}$, let $\dot{x}_{s_\ell}$ denote the steady state polynomial for species $s_\ell$ in $\mathscr{S}$ and, for $\ell = 1, \ldots, n_{\mathscr{S}'}$, let $\dot{x}_{s_\ell}'$ denote the steady state polynomial for species $s_\ell$ in $\mathscr{S}'$. To prove $\mathcal{I}(N') = \mathcal{I}(N)$ it is sufficient to show $\dot{x}_{s_\ell}' \in \mathcal{I}(N)$ for all $1 \leq \ell \leq n_{\mathscr{S}'}$ and $\dot{x}_{s_\ell} \in \mathcal{I}(N')$ for all $1 \leq \ell \leq n_\mathscr{S}$. Note that if $y_j \to y_j'$ is a reaction in $\mathscr{R}$ different from $y_i \to y_i'$, then $y_j \to y_j'$ is also in $\mathscr{R}'$, and the stochiometric coefficients of the species do not change for $y_j \to y_j'$
in $N'$. On the other hand, the stoichiometric coefficient of the species $s_k$ in the reaction $y_i \to y_i' + s_k$ has increased by one. Therefore,  $\dot{x}_s' = \dot{x}_s$ for any $s \in \mathscr{S} \setminus \{s_k\}$ and $\dot{x}_{s_k}' = \dot{x}_{s_k} + \kappa_{i}'x^{y_i}$ where $\dot{x}_{s_k} = 0$ if $s_k \not \in \mathscr{S}$. Hence,  $\dot{x}_s' \in \mathcal{I}(N)$ and $\dot{x}_s \in \mathcal{I}(N')$ for any $s \in \mathscr{S} \setminus \{s_k\}$. 

It remains to show that $\dot{x}_{s_k}' \in \mathcal{I}(N)$ and $\dot{x}_{s_k} \in \mathcal{I}(N')$. Since $u_{i}$ is almost balanced, then $x^{y_i} \in \mathcal{I}(N)$ by Corollary \ref{monomial_in_ideal}. Then $\dot{x}_{s_k}' = \dot{x}_{s_k} + \kappa_{i}'x^{y_i} \in \mathcal{I}(N)$. 

To show $\dot{x}_{s_k} \in \mathcal{I}(N')$, notice that $\mathcal{H}_{N}$ and $\mathcal{H}_{N'}$ have the same vertex set, since $N$ and $N'$ have the same number of reactions, moreover, the only difference between $E(\mathcal{H}_{N})$ and $E(\mathcal{H}_{N'})$ is that $E_{s_k}$ is extended to include $v_i$. Since $E_{s_k} \not \in \mathscr{E}$, then $u_{i} \in V(\mathcal{H}_{N'})$, the vertex corresponding to the reactant complex of $y_i \to y_i'+s_k$, is almost balanced in $\mathcal{H}_{N'}$ with respect to $\mathscr{E}' = \mathscr{E}_r' \sqcup \mathscr{E}_b'$ where $\mathscr{E}_r'= \mathscr{E}_r$ but as a multiset over the edges in $E(\mathcal{H}_{N'})$ and $\mathscr{E}_b'= \mathscr{E}_b$ but as a multiset over the edges in $E(\mathcal{H}_{N'})$.  Thus, by Corollary \ref{monomial_in_ideal}, $x^{y_i} \in \mathcal{I}(N')$ and, hence, $\dot{x}_{s_k} = \dot{x}_{s_k}' - \kappa_{i}'x^{y_i} \in \mathcal{I}(N')$. Therefore, $\dot{x}_{\ell}' \in \mathcal{I}(N)$ for all $1 \leq \ell \leq n_{\mathscr{S}'}$ and $\dot{x}_\ell \in \mathcal{I}(N')$ for all $1 \leq \ell \leq n_\mathscr{S}$ or, equivalently, $\mathcal{I}(N') = \mathcal{I}(N)$.
\end{proof}

\begin{example}\label{example1}
Consider the running example in Figure \ref{fig:network} to demonstrate how adding a species to the product of a reaction under the conditions described in Theorem \ref{addproduct} preserves the steady state ideal but changes the network hypergraph. Reaction edges are draw as line segments in the network hypergraph to emphasize their distinction from the species edges. 

By Proposition \ref{finestideal}, $\mathcal{I}(N) \subseteq \langle x_A , x_B \rangle$. Indeed, $\mathcal{I}(N)$ equals $\langle x_A, x_B \rangle$ but requires verification. The vertex labeled $A$ that is incident to $E_3$ is almost balanced with respect to the multiset $\mathscr{E} = \mathscr{E}_r \sqcup \mathscr{E}_b$ over the edges in $E(\mathcal{H}_N)$ where $\mathscr{E}_r = \{E_3\}$ and $\mathscr{E}_b = \{E_C\}$. By Corollary \ref{monomial_in_ideal}, $x_A \in \mathcal{I}(N)$. Further, notice that $x_B = \frac{\kappa_1}{\kappa_2\kappa_3}\dot{x}_C - \frac{1}{\kappa_2}\dot{x}_B$ so $x_B \in \mathcal{I}(N)$. Hence, $\mathcal{I}(N) = \langle x_A,x_B \rangle$. Since $E_B \not \in \mathscr{E}$, then $\mathcal{I}(N_1) = \mathcal{I}(N)$ where $N_1 = (N \setminus \{A \to C\}) \cup \{A \to B+C\}$; see Figure \ref{fig:network1}. 

\begin{figure}[h]
    \captionsetup{justification=centering,margin=2cm}
    \begin{center}
    \begin{tikzpicture}[scale=.75]
\tikzstyle{vertex}=[draw, circle, inner sep=0mm]
    \node (n) at (-6,-1.5) {};
    \node (a1) at (0,0) [vertex] {};
    \node (a2) at (3,-3) [vertex] {};
    \node (a3) at (-1.5,-1.5) [vertex] {};
    \node (b1) at (1.5,0) [vertex] {};
    \node (b2) at (3,-1.5) [vertex] {};
    \node (c) at (-1.5,-3) [vertex] {};
    \node (m) at (.75,-2.25) {};
    \node (poly) at (4,0) {};
    
    \begin{scope}[fill opacity=.5]
    \filldraw[fill = blue!50]
        ($(c)+(0,.25)$) 
        to[out=0,in=90] ($(c)+(.25,0)$)
        to[out=270,in=0] ($(c)+(0,-.25)$)
        to[out=180,in=270] ($(c)+(-.25,0)$)
        to[out=90,in=180] ($(c)+(0,.25)$);
        
    \filldraw[fill = orange!75]
        ($(b1)+(.2,.2)$) 
        to[out=-45,in=135] ($(b2)+(.2,.2)$)
        to[out=-45,in=45] ($(b2)+(.2,-.2)$)
        to[out=-135,in=-45] ($(b2)+(-.2,-.2)$)
        to[out=135,in=-45] ($(b1)+(-.2,-.2)$)
        to[out=135,in=-135] ($(b1)+(-.2,.2)$)
        to[out=45,in=135] ($(b1)+(.2,.2)$);
        
    \filldraw[fill = yellow!75]
        ($(a1)+(.2,.2)$)
        to[out=-45,in=135] ($(m)+(.15,.15)$)
        to[out=-45,in=135] ($(a2)+(.2,.2)$)
        to[out=-45,in=45] ($(a2)+(.2,-.2)$)
        to[out=-135,in=-45] ($(a2)+(-.2,-.2)$)
        to[out=135,in=-45] ($(m)+(-.15,-.15)$)
        to[out=135,in=-45] ($(a3)+(-.2,-.2)$)
        to[out=135,in=-135] ($(a3)+(-.2,.2)$)
        to[out=45,in=-135] ($(a1)+(-.2,.2)$)
        to[out=45,in=135] ($(a1)+(.2,.2)$);
    \end{scope}
    
    \draw[line width=.75pt] (a1) -- (b1);
    \draw[line width=.75pt] (b2) -- (a2);
    \draw[color=red,line width=1pt] (a3) -- (c);
    
    \fill (a1) circle (.075);
    \fill (a2) circle (.075);
    \fill (a3) circle (.075);
    \fill (b1) circle (.075);
    \fill (b2) circle (.075);
    \fill (c) circle (.075);
    
    \fill (a1) node [left,yshift=.5cm] {$\scaleobj{.75}{A}$};
    \fill (a2) node [below,xshift=.5cm] {$\scaleobj{.75}{A}$};
    \fill (a3) node [above,xshift=-.5cm] {$\scaleobj{.75}{A}$};
    \fill (b1) node [right,yshift=.5cm] {$\scaleobj{.75}{B}$};
    \fill (b2) node [above,xshift=.5cm] {$\scaleobj{.75}{B}$};
    \fill (c) node [below,xshift=-.5cm] {$\scaleobj{.75}{C}$};
    \fill (n) node [scale=.85,yshift=.5cm] {
    $\begin{tikzcd}[column sep = scriptsize, row sep = scriptsize] 
    A \ar[r,shift left,"\kappa_1"] \ar[d,"\kappa_3"'] & B \ar[l,shift left,"\kappa_2"]\\
    C &
    \end{tikzcd}$
    };
    \fill ($(m)+(-1.25,1.25)$) node {$\scaleobj{.75}{E_A}$};
    \fill ($(b1)+(.7,-.65)$) node {$\scaleobj{.75}{E_B}$};
    \fill ($(c)+(1,0)$) node {$\scaleobj{.75}{E_C}$};
    \fill ($(c)+(-.3,.7)$) node {$\scaleobj{.75}{E_3}$};
    \fill ($(a2)+(.3,.7)$) node {$\scaleobj{.75}{E_2}$};
    \fill ($(a1)+(.7,.3)$) node {$\scaleobj{.75}{E_1}$};
    \fill ($(a1)+(-8,0)$) node {$\scaleobj{.75}{N:}$};
    \fill ($(a1)+(-3,0)$) node {$\scaleobj{.75}{\mathcal{H}_N:}$};
    
    \fill (poly) node [right, xshift=.75cm] {$\scaleobj{.75}{\dot{x}_A = -(\kappa_1+\kappa_3)x_A + \kappa_2x_B}$};
    \fill ($(poly)+(0,-1.5)$) node [right,xshift=.75cm] {$\scaleobj{.75}{\dot{x}_B = \kappa_1x_A - \kappa_2x_B}$};
    \fill ($(poly)+(0,-3)$) node [right,xshift=.75cm] {$\scaleobj{.75}{\dot{x}_C = \kappa_3x_A}$};
    
    \draw ($(c)+(.15,0)$) -- ($(c)+(.65,0)$);
\end{tikzpicture}
    \end{center}
    \caption{A network $N$, whose steady state ideal is $\langle x_A, x_B \rangle$, and its network hypergraph.}
    \label{fig:network}
\end{figure}
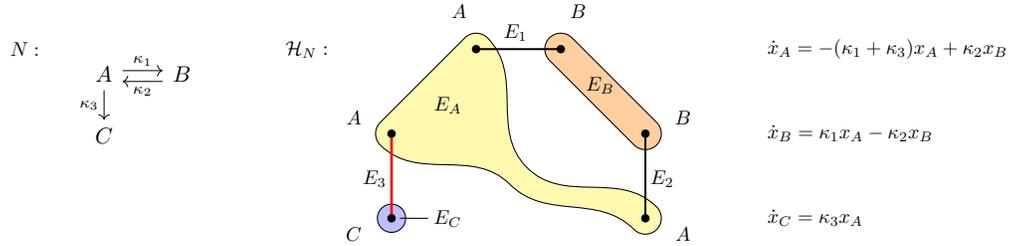

Notice that the vertex labeled $A$ in $V(\mathcal{H}_{N_1})$ that is incident to $E_3 \in E(\mathcal{H}_{N_1})$ is almost balanced with respect to $\mathscr{E}' = \mathscr{E}_r' \sqcup \mathscr{E}_b'$ as a multiset over the edges in $E(\mathcal{H}_{N_1})$ where $\mathscr{E}_r' = \{E_3\}$ and $\mathscr{E}_b' = \{E_C\}$. Thus, $x_A \in \mathcal{I}(N_1)$. Also, $x_B \in \mathcal{I}(N_1)$ since $x_B = \left(\frac{\kappa_1}{\kappa_2\kappa_3'} - \frac{1}{\kappa_2}\right)\dot{x}_C^{(1)} - \frac{1}{\kappa_2}\dot{x}_B^{(1)}$ where $\dot{x}_B^{(1)}$ and $\dot{x}_C^{(1)}$ are the steady state polynomials associated to $N_1$.

\begin{figure}[h]
    \captionsetup{justification=centering,margin=2cm}
    \begin{center}
    \begin{tikzpicture}[scale=.75]
\tikzstyle{vertex}=[inner sep=0mm]
    \node (n) at (-6,-1) {};
    \node (a1) at (0,0) [vertex] {};
    \node (a2) at (3,-3) [vertex] {};
    \node (a3) at (-1.5,-1.5) [vertex] {};
    \node (b1) at (1.5,0) [vertex] {};
    \node (b2) at (3,-1.5) [vertex] {};
    \node (c) at (-1.5,-3) [vertex] {};
    \node (m) at (.75,-2.25) {};
    \node (poly) at (5,0) {};
    
    \begin{scope}[fill opacity=.5]
    \filldraw[fill = orange!75]
        ($(b1)+(.2,.2)$) 
        to[out=-45,in=135] ($(b2)+(.2,.2)$)
        to[out=-45,in=45] ($(b2)+(.2,-.2)$)
        to[out=-135,in=45] ($(m)+(.15,-.15)$)
        to[out=-135,in=45] ($(c)+(.2,-.1)$)
        to[out=-135,in=-45] ($(c)+(-.2,-.1)$)
        to[out=135,in=-135] ($(c)+(-.2,.3)$)
        to[out=45,in=-135] ($(m)+(-.15,.15)$)
        to[out=45,in=-135] ($(b1)+(-.2,.2)$)
        to[out=45,in=135] ($(b1)+(.2,.2)$);
        
    \filldraw[fill = yellow!75]
        ($(a1)+(.2,.2)$)
        to[out=-45,in=135] ($(m)+(.15,.15)$)
        to[out=-45,in=135] ($(a2)+(.2,.2)$)
        to[out=-45,in=45] ($(a2)+(.2,-.2)$)
        to[out=-135,in=-45] ($(a2)+(-.2,-.2)$)
        to[out=135,in=-45] ($(m)+(-.15,-.15)$)
        to[out=135,in=-45] ($(a3)+(-.2,-.2)$)
        to[out=135,in=-135] ($(a3)+(-.2,.2)$)
        to[out=45,in=-135] ($(a1)+(-.2,.2)$)
        to[out=45,in=135] ($(a1)+(.2,.2)$);
        
    \filldraw[fill = blue!50]
        ($(c)+(.1,.2)$) 
        to[out=0,in=90] ($(c)+(.4,-.1)$)
        to[out=270,in=0] ($(c)+(.1,-.4)$)
        to[out=180,in=270] ($(c)+(-.2,-.1)$)
        to[out=90,in=180] ($(c)+(.1,.2)$);
    \end{scope}
    
    \draw[line width=.75pt] (a1) -- (b1);
    \draw[line width=.75pt] (b2) -- (a2);
    \draw[color=red,line width=1pt] (a3) -- (c);
    
    \fill (a1) circle (.075);
    \fill (a2) circle (.075);
    \fill (a3) circle (.075);
    \fill (b1) circle (.075);
    \fill (b2) circle (.075);
    \fill (c) circle (.075);
    
    \fill (a1) node [left,yshift=.5cm] {$\scaleobj{.75}{A}$};
    \fill (a2) node [below,xshift=.5cm] {$\scaleobj{.75}{A}$};
    \fill (a3) node [above,xshift=-.5cm] {$\scaleobj{.75}{A}$};
    \fill (b1) node [right,yshift=.5cm] {$\scaleobj{.75}{B}$};
    \fill (b2) node [above,xshift=.5cm] {$\scaleobj{.75}{B}$};
    \fill (c) node [below,xshift=-1cm] {$\scaleobj{.75}{B+C}$};
    \fill (n) node [scale=.85] {
    $\begin{tikzcd}[column sep = small, row sep = normal] 
    A \ar[r,shift left,"\kappa_1"] \ar[d,"\kappa_3'"'] & B \ar[l,shift left,"\kappa_2"]\\
    B+C &
    \end{tikzcd}$
    };
    \fill ($(m)+(-1.25,1.25)$) node {$\scaleobj{.75}{E_A}$};
    \fill ($(m)+(1.25,1.25)$) node {$\scaleobj{.75}{E_B}$};
    \fill ($(c)+(1,-.25)$) node {$\scaleobj{.75}{E_C}$};
    \fill ($(c)+(-.3,.7)$) node {$\scaleobj{.75}{E_3}$};
    \fill ($(a2)+(.3,.7)$) node {$\scaleobj{.75}{E_2}$};
    \fill ($(a1)+(.7,.3)$) node {$\scaleobj{.75}{E_1}$};
    \fill ($(n)+(-2,1)$) node {$\scaleobj{.75}{N_1:}$};
    \fill ($(a1)+(-3,0)$) node {$\scaleobj{.75}{\mathcal{H}_{N_1}:}$};
    
    \draw ($(c)+(.25,-.2)$) -- ($(c)+(.75,-.2)$);
    
    \fill (poly) node [right] {$\scaleobj{.75}{\dot{x}_A^{(1)} = -(\kappa_1+\kappa_3')x_A + \kappa_2x_B}$};
    \fill ($(poly)+(0,-1.5)$) node [right] {$\scaleobj{.75}{\dot{x}_B^{(1)} = (\kappa_1+\kappa_3')x_A - \kappa_2x_B}$};
    \fill ($(poly)+(0,-3)$) node [right] {$\scaleobj{.75}{\dot{x}_C^{(1)} = \kappa_3'x_A}$};
\end{tikzpicture}
    \end{center}
    \caption{A network $N_1$, obtained from $N$ in Figure \ref{fig:network} by adding $B$ to the product complex of the reaction $A \to C$, and its network hypergraph.}
    \label{fig:network1}
\end{figure}
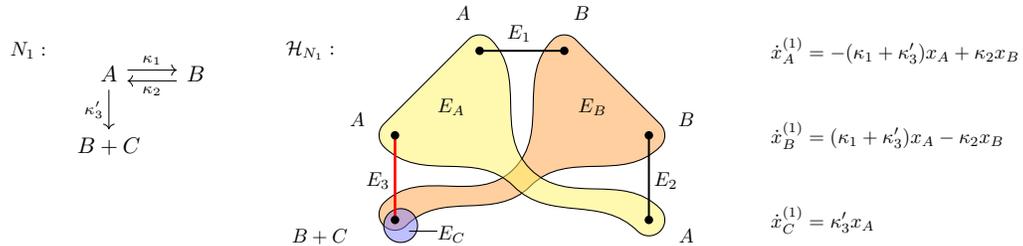
\end{example}

\begin{theorem}\label{addreactant} (Adding species to a reactant complex)
Let $N$ be a 0,1-network. Suppose there are distinct reactions $y_i \to y_i'$ and $y_j \to y_j'$ such that $y_j \mid y_i$ and $u_j$ is almost balanced with respect to some 2-colored edgeset $\mathscr{E} = \mathscr{E}_r \sqcup \mathscr{E}_b$. Suppose $s_k$ is a species such that $s_k \not \in \mathrm{supp}(y_i)$, and let $N' = (N \setminus \{y_i \to y_i'\}) \cup \{y_i+s_k \to y_i'\}$. If $E_{s_k} \not \in \mathscr{E}$, then $\mathcal{I}(N') = \mathcal{I}(N)$.
\end{theorem}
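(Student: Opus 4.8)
The plan is to follow the skeleton of the proof of Theorem \ref{addproduct}, but the crucial difference is that adding a species to a \emph{reactant} multiplies that reaction's monomial by $x_{s_k}$, turning $x^{y_i}$ into $x^{y_i+s_k}=x_{s_k}x^{y_i}$, and, worse, it may destroy the almost balance of $u_i$ itself (the species edge $E_{s_k}$ now swallows $u_i$). For this reason I would \emph{not} track $u_i$, but instead use the auxiliary reactant $y_j$ supplied by the hypothesis. First I would compare the two steady-state systems. Since $N$ and $N'$ agree on every reaction except the $i$th, and since for $s\neq s_k$ the net stoichiometry $\gamma_{s_i}$ is unaffected by adding $s_k$, I would record that for every species the difference $\dot{x}_s'-\dot{x}_s$ is a $\mathbb{K}(\bm{\kappa})$-linear combination of the two monomials $x^{y_i}$ and $x^{y_i+s_k}$ (using the convention $\dot{x}_{s_k}=0$ when $s_k\notin\mathscr{S}$, as in Theorem \ref{addproduct}). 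Explicitly, for $s\neq s_k$ one gets $\dot{x}_s'-\dot{x}_s=\gamma_{s_i}\bigl(\kappa_i' x^{y_i+s_k}-\kappa_i x^{y_i}\bigr)$, and for $s=s_k$ one gets a similar expression in which only the coefficient of $x^{y_i+s_k}$ is altered.

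The key step is to show that both monomials $x^{y_i}$ and $x^{y_i+s_k}$ lie in $\mathcal{I}(N)$ \emph{and} in $\mathcal{I}(N')$. In $\mathcal{I}(N)$ this is immediate: since $u_j$ is almost balanced in $\mathcal{H}_N$, Corollary \ref{monomial_in_ideal} gives $x^{y_j}\in\mathcal{I}(N)$, and because $y_j\mid y_i$ we have $x^{y_j}\mid x^{y_i}\mid x^{y_i+s_k}$, so both monomials are multiples of $x^{y_j}$ and hence belong to $\mathcal{I}(N)$. For $N'$ I would argue that $u_j$ remains almost balanced: $\mathcal{H}_N$ and $\mathcal{H}_{N'}$ have the same vertex set and the same underlying reactions (only the $i$th reactant complex changed), so the only edge whose incidences change is $E_{s_k}$, which gains the single vertex $u_i$, while every other edge covers exactly the same vertices. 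Because $E_{s_k}\notin\mathscr{E}$ by hypothesis, the degrees $\deg_{\mathscr{E}_r}(w)$ and $\deg_{\mathscr{E}_b}(w)$ of every vertex $w$ are identical in $\mathcal{H}_{N'}$ and $\mathcal{H}_N$; thus $u_j$ is almost balanced in $\mathcal{H}_{N'}$ with respect to the same coloring, now read over $E(\mathcal{H}_{N'})$. Corollary \ref{monomial_in_ideal} then gives $x^{y_j}\in\mathcal{I}(N')$, and $y_j\mid y_i$ again places $x^{y_i}$ and $x^{y_i+s_k}$ in $\mathcal{I}(N')$.

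Finally I would combine the two facts to obtain both inclusions. For $\mathcal{I}(N')\subseteq\mathcal{I}(N)$: each generator may be written $\dot{x}_s'=\dot{x}_s+(\dot{x}_s'-\dot{x}_s)$, where $\dot{x}_s\in\mathcal{I}(N)$ and the difference is a combination of $x^{y_i},x^{y_i+s_k}\in\mathcal{I}(N)$, so $\dot{x}_s'\in\mathcal{I}(N)$. Symmetrically, $\dot{x}_s=\dot{x}_s'-(\dot{x}_s'-\dot{x}_s)\in\mathcal{I}(N')$, using $x^{y_i},x^{y_i+s_k}\in\mathcal{I}(N')$. Hence the two ideals have a common generating set in the larger ring (see Remark \ref{rmk:equality}), and $\mathcal{I}(N')=\mathcal{I}(N)$.

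I expect the main obstacle to be the bookkeeping of the second paragraph: pinning down precisely how the edge set of the hypergraph changes under the operation (only $E_{s_k}$ expands, and only to cover $u_i$) and invoking $E_{s_k}\notin\mathscr{E}$ to conclude that no coloring degree is disturbed. This is exactly the point that forces the hypothesis to be phrased in terms of an auxiliary reactant $y_j$ with $y_j\mid y_i$ rather than $u_i$ as in Theorem \ref{addproduct}: the vertex $u_i$ may genuinely lose its almost balance once $E_{s_k}$ absorbs it, whereas $u_j$ is insulated from the change, and the divisibility $y_j\mid y_i$ is precisely what transfers monomial membership back to $x^{y_i}$ and $x^{y_i+s_k}$.
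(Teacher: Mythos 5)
Your proposal is correct and follows essentially the same route as the paper's proof: use the almost balance of $u_j$ together with Corollary \ref{monomial_in_ideal} and the divisibility $y_j \mid y_i$ to place $x^{y_i}$ and $x^{y_i+s_k}$ in both $\mathcal{I}(N)$ and $\mathcal{I}(N')$ (the latter via the observation that only $E_{s_k}$ changes, so $E_{s_k}\notin\mathscr{E}$ preserves the coloring), and then note that $\dot{x}_s'-\dot{x}_s$ is a combination of these two monomials. The only cosmetic difference is that the paper first splits off the species not involved in the $i$th reaction, for which $\dot{x}_s'=\dot{x}_s$, before applying this argument to the remaining species.
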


\begin{proof}
Let $N = (\mathscr{S}, \mathscr{C}, \mathscr{R})$ be a 0,1-network and let $y_i \to y_i'$ and $y_j \to y_j'$ be distinct reactions in $\mathscr{R}$ such that $y_j \mid y_i$ and $u_j$ is almost balanced with respect to the 2-colored multiset $\mathscr{E} = \mathscr{E}_r \sqcup \mathscr{E}_b$ over the edges in $E(\mathcal{H}_N)$. Suppose $s_k$ is a species, not necessarily in $\mathscr{S}$, such that $s_k \not \in \mathrm{supp}(y_i)$ and $E_{s_k} \not \in \mathscr{E}$. Obtain a network $N' = (\mathscr{S}', \mathscr{C}', \mathscr{R}')$ from $N$ by removing the reaction $y_i \to y_i'$ and adding the reaction $y_i+s_k \to y_i'$. Since $s_k \not \in \supp{y_i}$, then the coefficient of $s_k$ in $y_i+s_k$ is 1, so $N'$ is a 0,1-network.

Let $S_1' = \supp{(y_i+s_k)} \cup \supp{y_i'}$ and $S_2' = \mathscr{S}' \setminus S_1'$. If $s \in {S}_2'$, then $s$ is not involved in either of the reactions $y_i \to y_i'$ or $y_i+s_k \to y_i'$. Necessarily, the rate at which the concentration of $s$ changes in $N$ remains the same as in $N'$, that is, $\dot{x}_s' = \dot{x}_s$. Thus, $\dot{x}_s' \in \mathcal{I}(N)$ and $\dot{x}_s \in \mathcal{I}(N')$ for all $s \in S_2'$. 

On the other hand, if $s \in S_1'$, then $\dot{x}_s'$ and $\dot{x}_s$ differ by the rate at which the concentration of $s$ changes by the reaction $y_i \to y_i'$ and the rate at which the concentration $s$ changes by the reaction $y_i+s_k \to y_i'$. Specifically, $\dot{x}_s' - \gamma_{s_i}'\kappa_i'x^{y_i+s_k} = \dot{x}_s - \gamma_{s_i}\kappa_ix^{y_i}$. Since $u_j \in V(\mathcal{H}_N)$ is almost balanced, then $x^{y_j} \in \mathcal{I}(N)$. Since $y_j \mid y_i$, then $y_i-y_j$ is nonnegative and $x^{y_i} = x^{y_j}x^{y_i-y_j} \in \mathcal{I}(N)$ and so $x^{y_i+s_k} = x^{y_i}x^{s_k} \in \mathcal{I}(N)$. Thus, $\dot{x}_s' = \dot{x}_s - \gamma_{s_i}\kappa_ix^{y_i} + \gamma_{s_i}'\kappa_i'x^{y_i+s_k} \in \mathcal{I}(N)$. 

Now notice that $\mathcal{H}_{N}$ and $\mathcal{H}_{N'}$ have the same vertex set, moreover, the only difference between $E(\mathcal{H}_{N})$ and $E(\mathcal{H}_{N'})$ is that $E_{s_k}$ is extended to include $u_i$. Since $E_{s_k} \not \in \mathscr{E}$, then $u_{j} \in V(\mathcal{H}_{N'})$, the vertex corresponding to the reactant complex of $y_j \to y_j'$, is almost balanced in $\mathcal{H}_{N'}$ with respect to $\mathscr{E}' = \mathscr{E}_r' \sqcup \mathscr{E}_b'$ where $\mathscr{E}_r'= \mathscr{E}_r$ but as a multiset over the edges in $E(\mathcal{H}_{N'})$ and $\mathscr{E}_b'= \mathscr{E}_b$ but as a multiset over the edges in $E(\mathcal{H}_{N'})$. By Corollary \ref{monomial_in_ideal}, $x^{y_j} \in\mathcal{I}(N')$. Since $x^{y_i} = x^{y_j}x^{y_i-y_j} \in \mathcal{I}(N')$, then we have $\dot{x}_s = \dot{x}_s' - \gamma_{s_i}'\kappa_i'x^{y_i+s_k} + \gamma_{s_i}\kappa_ix^{y_i} \in \mathcal{I}(N')$. Thus, $\dot{x}_s \in \mathcal{I}(N')$ and $\dot{x}_s' \in \mathcal{I}(N)$ for all $s \in S_1'$. Therefore, $\mathcal{I}(N') = \mathcal{I}(N)$.
\end{proof}

\begin{example}\label{example2}
In Figure \ref{fig:network1}, the reactions $A \to B$ and $A \to B+C$ are distinct and certainly $A \mid A$. Since the vertex $u_3$, labeled $A$ and incident to $E_3$, is almost balanced in $\mathcal{H}_{N_1}$, then obtaining a network $N_2$ from $N_1$ by adding $B$ to the reactant of $A \to B$ preserves the steady state ideal; see Figure \ref{fig:network2}. The vertex $u_3 \in V(\mathcal{H}_{N_2})$, labeled $A$ and incident to $E_3$, remains almost balanced in $\mathcal{H}_{N_2}$ with the same colored edges as for $u_3 \in V(\mathcal{H}_{N_1})$, so $x_A \in \mathcal{I}(N_2)$. Notice the vertex $u_2 \in V(\mathcal{H}_{N_1})$, labeled $B$ and incident to $E_2$, is almost balanced in $\mathcal{H}_{N_2}$. To be specific, $u_2$ is almost balanced in $\mathcal{H}_{N_2}$ with respect to the 2-colored edgeset $\mathscr{E} = \mathscr{E}_r \sqcup \mathscr{E}_b$ over $E(\mathcal{H}_{N_2})$ where $\mathscr{E}_r = \{E_B\}$ and $\mathscr{E}_b = \{E_1,E_C\}$. Hence, $x_B \in \mathcal{I}(N_2)$.

\begin{figure}[h]
    \captionsetup{justification=centering,margin=2cm}
    \begin{center}
    \begin{tikzpicture}[scale=.75]
\tikzstyle{vertex}=[inner sep=0mm]
    \node (n) at (-7,-1.5) {};
    \node (a1) at (0,0) [vertex] {};
    \node (a2) at (3,-3) [vertex] {};
    \node (a3) at (-1.5,-1.5) [vertex] {};
    \node (b1) at (1.5,0) [vertex] {};
    \node (b2) at (3,-1.5) [vertex] {};
    \node (c) at (-1.5,-3) [vertex] {};
    \node (m) at (.75,-2.25) {};
    
    \begin{scope}[fill opacity=.5]
    \filldraw[fill = red!75]
        ($(b1)+(.2,.2)$) 
        to[out=-45,in=135] ($(b2)+(.2,.2)$)
        to[out=-45,in=45] ($(b2)+(.2,-.2)$)
        to[out=-135,in=45] ($(m)+(.15,-.15)$)
        to[out=-135,in=45] ($(c)+(.2,-.1)$)
        to[out=-135,in=-45] ($(c)+(-.2,-.1)$)
        to[out=135,in=-135] ($(c)+(-.2,.3)$)
        to[out=45,in=-115] ($(m)+(-.25,.25)$)
        to[out=70,in=-30] ($(a1)+(0,-.3)$)
        to[out=150,in=270] ($(a1)+(-.2,0)$)
        to[out=90,in=180] ($(a1)+(0,.2)$)
        to[out=0,in=180] ($(b1)+(-.1,.3)$)
        to[out=0,in=135] ($(b1)+(.2,.2)$);
        
    \filldraw[fill = yellow!75]
        ($(a1)+(.2,.2)$)
        to[out=-45,in=135] ($(m)+(.15,.15)$)
        to[out=-45,in=135] ($(a2)+(.2,.2)$)
        to[out=-45,in=45] ($(a2)+(.2,-.2)$)
        to[out=-135,in=-45] ($(a2)+(-.2,-.2)$)
        to[out=135,in=-45] ($(m)+(-.15,-.15)$)
        to[out=135,in=-45] ($(a3)+(-.2,-.2)$)
        to[out=135,in=-135] ($(a3)+(-.2,.2)$)
        to[out=45,in=-135] ($(a1)+(-.2,.2)$)
        to[out=45,in=135] ($(a1)+(.2,.2)$);
        
    \filldraw[fill = blue!50]
        ($(c)+(.1,.2)$) 
        to[out=0,in=90] ($(c)+(.4,-.1)$)
        to[out=270,in=0] ($(c)+(.1,-.4)$)
        to[out=180,in=270] ($(c)+(-.2,-.1)$)
        to[out=90,in=180] ($(c)+(.1,.2)$);
    \end{scope}
    
    \draw[line width=.75pt,color=blue] (a1) -- (b1);
    \draw[line width=.75pt] (b2) -- (a2);
    \draw[line width=.75pt] (a3) -- (c);
    
    \fill (a1) circle (.075);
    \fill (a2) circle (.075);
    \fill (a3) circle (.075);
    \fill (b1) circle (.075);
    \fill (b2) circle (.075);
    \fill (c) circle (.075);
    
    \fill (a1) node [left,yshift=.5cm] {$\scaleobj{.75}{A+B}$};
    \fill (a2) node [below,xshift=.5cm] {$\scaleobj{.75}{A}$};
    \fill (a3) node [above,xshift=-.5cm] {$\scaleobj{.75}{A}$};
    \fill (b1) node [right,yshift=.5cm] {$\scaleobj{.75}{B}$};
    \fill (b2) node [above,xshift=.5cm] {$\scaleobj{.75}{B}$};
    \fill (c) node [below,xshift=-1cm] {$\scaleobj{.75}{B+C}$};
    \fill (n) node [scale=.85] {
    $\begin{tikzcd}[column sep = small, row sep = normal] 
    A \ar[d,"\kappa_3'"'] & B \ar[l,"\kappa_2"']\\
    B+C & A+B \ar[u,"\kappa_1'"]
    \end{tikzcd}$
    };
    \fill ($(m)+(-1.25,1.25)$) node {$\scaleobj{.75}{E_A}$};
    \fill ($(m)+(1.25,1.25)$) node {$\scaleobj{.75}{E_B}$};
    \fill ($(c)+(1,-.25)$) node {$\scaleobj{.75}{E_C}$};
    \fill ($(c)+(-.3,.7)$) node {$\scaleobj{.75}{E_3}$};
    \fill ($(a2)+(.3,.7)$) node {$\scaleobj{.75}{E_2}$};
    \fill ($(a1)+(.7,1)$) node {$\scaleobj{.75}{E_1}$};
    \fill ($(a1)+(-9,0)$) node {$\scaleobj{.75}{N_2:}$};
    \fill ($(a1)+(-3,0)$) node {$\scaleobj{.75}{\mathcal{H}_{N_2}:}$};
    
    \draw ($(c)+(.25,-.2)$) -- ($(c)+(.75,-.2)$);
    \draw ($(a1)+(.75,.05)$) -- ($(a1)+(.75,.65)$);
\end{tikzpicture}
    \end{center}
    \caption{A network $N_2$, obtained from $N_1$ in Figure \ref{fig:network1} by adding $B$ to the reactant complex of the reaction $A \to B$, and its network hypergraph.}
    \label{fig:network2}
\end{figure}
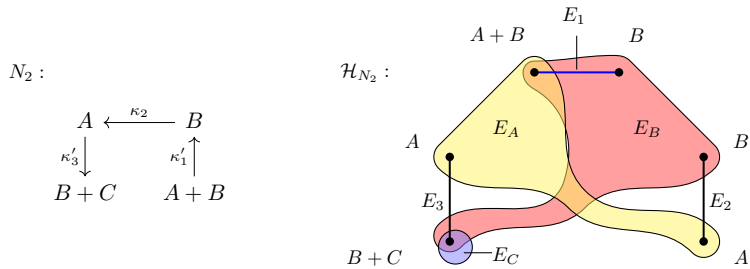
\end{example}

\begin{theorem}\label{addreaction} (Adding reactions)
Let $N$ be a 0,1-network. Suppose there is some vertex $v_i \in V(\mathcal{H}_N)$ such that $v_i$ is almost balanced with respect to the 2-colored multiset $\mathscr{E} = \mathscr{E}_r \sqcup \mathscr{E}_b$ over the edges in $E(\mathcal{H}_N)$. Let $N' = N \cup \{y_i \to \varnothing\}$. If $E_i \not \in \mathscr{E}$, then $\mathcal{I}(N') = \mathcal{I}(N)$.
\end{theorem}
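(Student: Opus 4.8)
The plan is to follow the same template as the proofs of Theorems~\ref{addproduct} and~\ref{addreactant}: construct $N'$, compare the steady-state polynomials of $N$ and $N'$ species by species, and then check that the two ideals contain each other's generators. First I would set $N' = N \cup \{y_i \to \varnothing\}$, giving the new degradation reaction a fresh rate constant $\kappa'$. Note that $N'$ is again a $0,1$-network: the only new complex is $\varnothing$, which introduces no new species (so $\mathscr{S}' = \mathscr{S}$), and $y_i \to \varnothing$ is a permitted degradation. Since $N'$ retains every reaction of $N$ and merely adds $y_i \to \varnothing$, whose net stoichiometry for a species $s$ is $-y_{i_s}$, the steady-state polynomials satisfy $\dot{x}_s' = \dot{x}_s - \kappa' x^{y_i}$ for $s \in \supp(y_i)$ and $\dot{x}_s' = \dot{x}_s$ otherwise, using that $y_{i_s} \in \{0,1\}$.

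The containment $\mathcal{I}(N') \subseteq \mathcal{I}(N)$ is then immediate: because $v_i$ is almost balanced in $\mathcal{H}_N$, Corollary~\ref{monomial_in_ideal} gives $x^{y_i} \in \mathcal{I}(N)$, so every $\dot{x}_s'$, being either $\dot{x}_s$ or $\dot{x}_s - \kappa' x^{y_i}$, lies in $\mathcal{I}(N)$. For the reverse containment it suffices to show $x^{y_i} \in \mathcal{I}(N')$, since then $\dot{x}_s = \dot{x}_s' + \kappa' x^{y_i} \in \mathcal{I}(N')$ for every $s$. I would obtain this by proving that $v_i$ remains almost balanced in $\mathcal{H}_{N'}$ with respect to the \emph{same} coloring $\mathscr{E}' = \mathscr{E}_r \sqcup \mathscr{E}_b$, now read over $E(\mathcal{H}_{N'})$, and then applying Corollary~\ref{monomial_in_ideal} inside $N'$.

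The heart of the argument, and the step I expect to be the main obstacle, is controlling the one relevant new vertex. Passing from $\mathcal{H}_N$ to $\mathcal{H}_{N'}$ adds a reactant vertex $\tilde{u}$ for $y_i \to \varnothing$ (the product $\varnothing$ contributes only an isolated vertex and an empty reaction hyperedge, both irrelevant) and expands each species edge $E_s$ with $s \in \supp(y_i)$ to cover $\tilde{u}$. Every old vertex keeps its red and blue degrees, so conditions (1) and (2) of almost balance for $v_i$ survive over $V(\mathcal{H}_N)$; the only thing left to verify is $\deg_{\mathscr{E}_r'}(\tilde{u}) = \deg_{\mathscr{E}_b'}(\tilde{u})$. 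Since $\tilde{u}$ corresponds to the complex $y_i$, the species edges covering $\tilde{u}$ are exactly those covering $u_i$, while no reaction edge covers $\tilde{u}$; hence $\deg_{\mathscr{E}_c'}(\tilde{u}) = \deg_{\mathscr{E}_c}(u_i) - m_c$, where $m_c$ is the multiplicity of the reaction edge $E_i$ in $\mathscr{E}_c$. The hypothesis $E_i \notin \mathscr{E}$ forces $m_r = m_b = 0$, so $\deg_{\mathscr{E}_c'}(\tilde{u}) = \deg_{\mathscr{E}_c}(u_i)$; and because $v_i$ is almost balanced with $u_i \neq v_i$ (indeed $y_i' \neq \varnothing$, as otherwise $v_i$ could not be almost balanced), condition (2) gives $\deg_{\mathscr{E}_r}(u_i) = \deg_{\mathscr{E}_b}(u_i)$. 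Thus $\tilde{u}$ is balanced, $v_i$ is almost balanced in $\mathcal{H}_{N'}$, and $x^{y_i} \in \mathcal{I}(N')$. Combining the two containments yields $\mathcal{I}(N') = \mathcal{I}(N)$.
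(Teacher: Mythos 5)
Your proposal is correct and follows essentially the same route as the paper's proof: split species according to membership in $\supp(y_i)$, use Corollary \ref{monomial_in_ideal} in $N$ for one containment, and for the other show that $v_i$ stays almost balanced in $\mathcal{H}_{N'}$ under the same coloring by checking that the new reactant vertex for $y_i \to \varnothing$ has equal red and blue degree, which is exactly where the hypothesis $E_i \notin \mathscr{E}$ is used in both arguments. Your bookkeeping via the multiplicities $m_c$ of $E_i$ is, if anything, slightly more explicit than the paper's phrasing of the same step.
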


\begin{proof}
Let $N = (\mathscr{S}, \mathscr{C}, \mathscr{R})$ be a 0,1-network and suppose $v_i \in V(\mathcal{H}_N)$ is almost balanced with respect to a 2-colored multiset $\mathscr{E} = \mathscr{E}_r \sqcup \mathscr{E}_b$ over the edges in $E(\mathcal{H}_N)$ such that $E_i \not \in \mathscr{E}$. Obtain a network $N' = (\mathscr{S}', \mathscr{C}', \mathscr{R}')$ from $N$ by adding the reaction $\begin{tikzcd}[column sep = small] y_i \ar[r,"\kappa_i'"] & \varnothing \end{tikzcd}$. Since $N'$ and $N$ contain the same nonempty complexes and $N$ is a 0,1-network, then $N'$ must also be a 0,1-network.

Let $S_1' = \supp{y_i}$ and $S_2' = \mathscr{S} \setminus S_1'$. If $s \in S_2'$, then $s$ is not involved in the reaction $y_i \to \varnothing$. Necessarily, the rate of change of the concentration of species $s$ in $N$ is the same as the rate of change of its concentration in $N'$. That is, $\dot{x}_s' = \dot{x}_s$ for any $s \in S_2'$. Thus, $\dot{x}_s' \in \mathcal{I}(N)$ and $\dot{x}_s \in \mathcal{I}(N')$ for all $s \in S_2'$.

Now, if $s \in S_1'$, then $\dot{x}_s'$ and $\dot{x}_s$ differ by the rate at which the concentration of $s$ changes by the reaction $y_i \to \varnothing$. More specifically, $\dot{x}_s' = \dot{x}_s - \kappa_i'x^{y_i}$. Since $v_i$ is almost balanced in $\mathcal{H}_N$, then $x^{y_i} \in \mathcal{I}(N)$ by Corollary \ref{monomial_in_ideal}. Thus, $\dot{x}_s'= \dot{x}_s - \kappa_i'x^{y_i} \in \mathcal{I}(N)$ for any $s \in S_1'$. 

Notice that the vertex set of $\mathcal{H}_{N'}$ is $V(\mathcal{H}) \cup \{u,v\}$ where $u$ and $v$ correspond to the complexes $y_i$ and $\varnothing$ in the reaction $y_i \to \varnothing$, respectively. Moreover, there are two differences between $E(\mathcal{H}_N)$ and $E(\mathcal{H}_{N'})$: first, $E(\mathcal{H}_{N'})$ contains the empty reaction edge corresponding to $y_i \to \varnothing$ and, second, if $s \in \supp{y_i}$, then $E_s$ is extended to include $u$. We will show that the 2-colored multiset $\mathscr{E} = \mathscr{E}_r \sqcup \mathscr{E}_b$ extends to a 2-colored multiset $\mathscr{E}' = \mathscr{E}_r' \sqcup \mathscr{E}_b'$ over the edges in $E(\mathcal{H}_{N'})$ such that $v_i$ is almost balanced in $\mathcal{H}_{N'}$ with respect to $\mathscr{E}'$. By the previous observations on the differences between $E(\mathcal{H}_N)$ and $E(\mathcal{H}_{N'})$, we need only show that $\mathscr{E}$ extends to $\mathscr{E}'$ in such a way that $\deg_{\mathscr{E}_r'}(u) = \deg_{\mathscr{E}_b'}(u)$ and $\deg_{\mathscr{E}_r'}(v) = \deg_{\mathscr{E}_b'}(v)$. Since $v$ corresponds to the empty complex, then $v$ is isolated and, hence, $\deg_{\mathscr{E}_r'}(v) = \deg_{\mathscr{E}_b'}(v)$, regardless of the choice of $\mathscr{E}'$.

Let $\mathscr{E}'=\mathscr{E}_r' \sqcup \mathscr{E}_b'$ where $\mathscr{E}_r' = \mathscr{E}_r$ but viewed as a multiset over the edges in $E(\mathcal{H}_{N'})$ and $\mathscr{E}_b' = \mathscr{E}_b$ but viewed as a multiset over the edges in $E(\mathcal{H}_{N'})$. Since $v_i$ is almost balanced with respect to $\mathscr{E} = \mathscr{E}_r \sqcup \mathscr{E}_b$, then it is almost balanced with respect to $\mathscr{E}'=\mathscr{E}_r' \sqcup \mathscr{E}_b'$, in particular, $\deg_{\mathscr{E}_r'}(u_i) = \deg_{\mathscr{E}_b'}(u_i)$. Since $E_i \not \in \mathscr{E}$, it is the case $E_i \notin \mathscr{E}'$, so $\deg_{\mathscr{E}_r'}(u_i) = \deg_{\mathscr{E}_b'}(u_i)$ solely counts the number of red and blue species edges that cover $u_i$. Note that both $u_i$ and $u$ correspond to the complex $y_i$. Then the number of red and blue species edges that cover $u$ is the same as the number of red and blue species edges that cover $u_i$. That is, $\deg_{\mathscr{E}_r'}(u) = \deg_{\mathscr{E}_b'}(u)$. Thus, $v_i$ is almost balanced in $\mathcal{H}_{N'}$ with respect to $\mathscr{E}' = \mathscr{E}_r' \sqcup \mathscr{E}_b'$. By Corollary \ref{monomial_in_ideal}, $x^{y_i} \in \mathcal{I}(N')$ and so $\dot{x}_s = \dot{x}_s' + \kappa_i'x^{y_i} \in \mathcal{I}(N')$ for any $s \in S_1'$. Therefore, $\mathcal{I}(N) = \mathcal{I}(N')$.
\end{proof}

\begin{example}
In the hypergraph $\mathcal{H}_{N_2}$ in Figure \ref{fig:network2}, we see that $v_3$, the vertex labeled $B+C$, is almost balanced with respect to $\mathscr{E} = \mathscr{E}_r \sqcup \mathscr{E}_b$ where $\mathscr{E}_r = \{E_C\}$ and $\mathscr{E}_b = \varnothing$. Since $E_3 \not \in \mathscr{E}$, then the network $N_3$ obtained from $N_2$ by adding the reaction $A \to \varnothing$ has the same steady state ideal as $N_2$; see Figure 
\ref{fig:network3}. Further, $v_3$ remains almost balanced in $\mathcal{H}_{N_3}$ with respect to its previous 2-coloring of $\mathscr{E}$ so $x_A \in \mathcal{I}(N_3)$. To see $x_B \in \mathcal{I}(N_3)$, notice that $u_2 \in V(\mathcal{H}_{N_3})$ is almost balanced for the same reason it is almost balanced in Example \ref{example2}.

\begin{figure}[h]
    \centering
    \captionsetup{justification=centering,margin=2cm}
    \begin{tikzpicture}[scale=.75]
\tikzstyle{vertex}=[inner sep=0mm]
    \node (n) at (-7,-1.5) {};
    \node (a1) at (0,0) [vertex] {};
    \node (a2) at (3,-3) [vertex] {};
    \node (a3) at (-1.5,-1.5) [vertex] {};
    \node (b1) at (1.5,0) [vertex] {};
    \node (b2) at (3,-1.5) [vertex] {};
    \node (c) at (-1.5,-3) [vertex] {};
    \node (m) at (.75,-2.25) {};
    \node (a4) at (1.5,-4.5) [vertex] {};
    \node (v) at (0,-4.5) [vertex] {};
    
    \begin{scope}[fill opacity=.5]
    \filldraw[fill = orange!75]
        ($(b1)+(.2,.2)$) 
        to[out=-45,in=135] ($(b2)+(.2,.2)$)
        to[out=-45,in=45] ($(b2)+(.2,-.2)$)
        to[out=-135,in=45] ($(m)+(.15,-.15)$)
        to[out=-135,in=45] ($(c)+(.2,-.1)$)
        to[out=-135,in=-45] ($(c)+(-.2,-.1)$)
        to[out=135,in=-135] ($(c)+(-.2,.3)$)
        to[out=45,in=-115] ($(m)+(-.25,.25)$)
        to[out=70,in=-30] ($(a1)+(0,-.3)$)
        to[out=150,in=270] ($(a1)+(-.2,0)$)
        to[out=90,in=180] ($(a1)+(0,.2)$)
        to[out=0,in=180] ($(b1)+(-.1,.3)$)
        to[out=0,in=135] ($(b1)+(.2,.2)$);
        
    \filldraw[fill = yellow!75]
        ($(a1)+(.2,.2)$)
        to[out=-45,in=135] ($(m)+(.15,.15)$)
        to[out=-45,in=135] ($(a2)+(.2,.2)$)
        to[out=-45,in=45] ($(a2)+(.2,-.2)$)
        to[out=-135,in=45] ($(a4)+(.2,-.2)$)
        to[out=-135,in=-45] ($(a4)+(-.2,-.2)$)
        to[out=135,in=-45] ($(m)+(-.15,-.15)$)
        to[out=135,in=-45] ($(a3)+(-.2,-.2)$)
        to[out=135,in=-135] ($(a3)+(-.2,.2)$)
        to[out=45,in=-135] ($(a1)+(-.2,.2)$)
        to[out=45,in=135] ($(a1)+(.2,.2)$);
        
    \filldraw[fill = red!75]
        ($(c)+(.1,.2)$) 
        to[out=0,in=90] ($(c)+(.4,-.1)$)
        to[out=270,in=0] ($(c)+(.1,-.4)$)
        to[out=180,in=270] ($(c)+(-.2,-.1)$)
        to[out=90,in=180] ($(c)+(.1,.2)$);
    \end{scope}
    
    \draw[line width=.75pt] (a1) -- (b1);
    \draw[line width=.75pt] (b2) -- (a2);
    \draw[line width=.75pt] (a3) -- (c);
    
    \fill (a1) circle (.075);
    \fill (a2) circle (.075);
    \fill (a3) circle (.075);
    \fill (b1) circle (.075);
    \fill (b2) circle (.075);
    \fill (c) circle (.075);
    \fill (a4) circle (.075);
    \fill (v) circle (.075);
     
    \fill (a1) node [left,yshift=.5cm] {$\scaleobj{.75}{A+B}$};
    \fill (a2) node [below,xshift=.5cm] {$\scaleobj{.75}{A}$};
    \fill (a3) node [above,xshift=-.5cm] {$\scaleobj{.75}{A}$};
    \fill (b1) node [right,yshift=.5cm] {$\scaleobj{.75}{B}$};
    \fill (b2) node [above,xshift=.5cm] {$\scaleobj{.75}{B}$};
    \fill (c) node [below,xshift=-1cm] {$\scaleobj{.75}{B+C}$};
    \fill (a4) node [right,yshift=-.5cm] {$\scaleobj{.75}{A}$};
    \fill (v) node [left,yshift=-.5cm] {$\scaleobj{.75}{\varnothing}$};
    
    \fill ($(n)+(0,-.5)$) node [scale=.85] {
    $\begin{tikzcd}[column sep = small, row sep = normal] 
    \varnothing & A \ar[l,"\kappa_4"'] \ar[d,"\kappa_3'"'] & B \ar[l,"\kappa_2"']\\
    & B+C & A+B \ar[u,"\kappa_1'"]
    \end{tikzcd}$
    };
    \fill (-.35,-.925) node {$\scaleobj{.75}{E_A}$};
    \fill (1.5,-.925) node {$\scaleobj{.75}{E_B}$};
    \fill ($(c)+(1,-.25)$) node {$\scaleobj{.75}{E_C}$};
    \fill ($(c)+(-.3,.7)$) node {$\scaleobj{.75}{E_3}$};
    \fill ($(a2)+(.3,.7)$) node {$\scaleobj{.75}{E_2}$};
    \fill ($(a1)+(.7,1)$) node {$\scaleobj{.75}{E_1}$};
    \fill ($(n)+(-3,1)$) node {$\scaleobj{.75}{N_3:}$};
    \fill ($(a1)+(-3,0)$) node {$\scaleobj{.75}{\mathcal{H}_{N_3}:}$};
    
    \draw ($(c)+(.25,-.2)$) -- ($(c)+(.75,-.2)$);
    \draw ($(a1)+(.75,.05)$) -- ($(a1)+(.75,.65)$);
\end{tikzpicture}
    \caption{A network $N_3$, obtained from $N_2$ in Figure \ref{fig:network2} by adding the reaction $A \to \varnothing$, and its network hypergraph.}
    \label{fig:network3}
\end{figure}
\end{example}

\begin{example}\label{diff_networks_same_ideal}
Let us see an example of networks with the same steady-state ideal but where we cannot move between them using the above three operations. Consider the following two networks:
\[
\begin{array}{rl}
    N_1: \begin{tikzcd}
    A  \ar[r,"\kappa_1"] & \varnothing
    \end{tikzcd} &  \\
    \begin{tikzcd}
    B \ar[r,"\kappa_2"] & \varnothing
    \end{tikzcd}
\end{array}
\]
\[
\begin{array}{rl}
     N_2: \begin{tikzcd}[every arrow/.append style={shift left}]
     A \ar[r,"\kappa_3"] & B \ar[l,"\kappa_4"]
     \end{tikzcd} &\\
     \begin{tikzcd}
     A \ar[r,"\kappa_5"] & \varnothing
     \end{tikzcd}&
\end{array}
\]
Both reaction networks $N_1$ and $N_2$ have $\langle x_A,x_B \rangle$ as their steady-state ideal. From a quick inspection, we might believe that these networks can be obtained by the other via a sequence of operations. However, that is not that case. In short, the reason is because $N_2$ is a minimal network with respect to the ideal preserving operations. Hence, if $N_2$ were obtained from $N_1$ by a sequence of operations, it would contradict the minimality of $N_2$ since $N_1$ is the ``smaller'' network. Thus, we need only show that $N_2$ is indeed minimal. Thinking about the operations in reverse, we can either remove a species or remove a reaction of the form $y \to \varnothing$. Notice that removing any species is out of the picture since each nonempty reactant complex in $N_2$ has exactly one species (with coefficient 1), that is, $N_2$ is \textit{monomolecular}. Thus, all we have left is to possibly remove any reactions. Removing a reaction requires a product complex to be almost balanced. By Proposition \ref{reversible_is_balanced}, the presence of the reversible reactions prevents the vertices $u_3$, $v_3$, $u_4$, and $v_4$ from being almost balanced. Thus, we cannot remove the fifth reaction $A \to \varnothing$. Also, we cannot remove $A \to B$ since that would first require removing $B$ from the product, which a priori we cannot do. The reaction $B \to A$ cannot be removed for the same reason. We are now all out of operations so $N_2$ must be minimal.

\end{example}

\section{Discussion}

In this paper, we discuss three ideal preserving operations on reaction networks based on the underling combinatorics of their associated network hypergraphs.  Each of these operations require the existence of an almost balanced vertex in the network hypergraph and thus an almost balanced edge set.  We note that determining the existence of an almost balanced vertex is a difficult problem to solve in general. Indeed, the authors of \cite{PS} show that balanced edge sets, i.e. hypergraphs with possible multiple edges whose edges can be 2-colored so that every vertex is covered by the same number of red and blue edges, have duals that are necessarily discrepancy zero. A result of \cite{CNN} shows that it is NP-hard to distinguish if a hypergraph has discrepancy zero or if there is a vertex with discrepancy bounded below by $\Omega(\sqrt{m})$ where $m$ is the number of edges. Hypergraphs that satisfy the conditions of our main theorems that contain almost balanced vertices have subhypergraphs with small discrepancy, thus we expect algorithmically checking for almost-balanced vertices to be troublesome.  Although, while determining the existence of a balanced vertex may be hard in general, we do give several conditions that guarantees that a specific vertex cannot be almost balanced: (1) the vertex corresponds to a complex in a reversible reaction and (2) the vertex corresponds to a complex in the a cycle in the reaction graph. 

One motivation for this paper is the classification of steady-state ideals.  As a first step to this problem, we can ask: \emph{when is the steady-state ideal monomial?}  Our results give a partial answer to this question. Specifically, Theorem \ref{min_react_generate_ideal} states that, for 0,1-networks, the steady-state ideal is monomial if every reaction contains an almost balanced vertex.  We do not yet have a necessary condition that can test whether a steady-state ideal is monomial. As we see in network $N_2$ from  Example \ref{diff_networks_same_ideal}, there can be situations where a monomial is contained in the steady-state ideal, but its corresponding complex does not belong to a reaction with an almost balanced vertex.  In Example \ref{diff_networks_same_ideal}, this happens because the reaction is reversible.  Thus, perhaps a next direction to this work is to understand how to handle reversible reactions and, more generally, how to handle reaction cycles within the reaction network. We suspect there is a hidden fourth operation at play here involving reaction cycles. For example, we would like our operations to detect that the networks $\begin{tikzcd}[column sep = small] \varnothing & A \ar[r,shift left] \ar[l] & B \ar[l,shift left] \end{tikzcd}$ and $\begin{tikzcd}[column sep = small] A \ar[r] & \varnothing & B \ar[l] \end{tikzcd}$ have the same steady state ideal, that is, obtain one network from the other via a sequence of operations. Such an operation would be quite natural as we would expect the former network to behave as the latter due to the dependence of $A$ and $B$ caused by the reaction cycle. 

Another, perhaps more difficult, direction to take is to generalize the definition of almost balanced to handle reaction networks which are not 0,1-networks. This requires one to be able to ``see'' the coefficients of stoichiometry. The information encoded by a network hypergraph merely captures the presence of either a reaction or a species within a reaction regardless of its stoichiometric coefficient. Namely, the network hypergraph merely decides if $\gamma_{s_i} = 0$ or $\gamma_{s_i} \neq 0$. To combat this, we might include the stoichiometric coefficients in the definition of almost balanced. We might propose to revise the equations in the current definition of almost balanced to 
\begin{align*}
    \sum_{e \in \mathscr{E}_r} \gamma(v,e) &\neq \sum_{e \in \mathscr{E}_b} \gamma(v,e) \\
    \sum_{e \in \mathscr{E}_r}\gamma(u,e) &= \sum_{e \in \mathscr{E}_b}\gamma(u,e) \quad , \; u \neq v 
\end{align*}
where $\gamma$ is the map $V \times E \to \mathbb{Z}$ defined by
\[
\gamma(v,e) = 
\begin{cases}
0 &\text{ if } v \not\in e\\
1 &\text{ if } e = E_i \\
\gamma_{s_i} &\text{ if } v \in E_i, e = E_s
\end{cases}
\]
Notice that if $\gamma_{s_i} = 0$ or $1$ then the above equations are equivalent to the current definition of almost balanced.

 Finally, we are very interested in discovering new operations that preserve the steady-state ideal. The operations we describe can only be applied if we have first found an almost balance vertex. Hence, the operations work on networks with at least one monomial in their steady state ideal. However, most biologically relevant networks do not have any monomials in their steady state ideal. In those cases, we would like to have ideal preserving operations that can reduce a network down to its simplest form while still maintaining its algebraic structure. This might require an analogue of almost balanced to recover binomials or, more generally, polynomials with a fixed number of terms. 

\section{Acknowledgements}
Elizabeth Gross was supported by the National Science Foundation DMS-1620109 and DMS-1945584. We thank Jordan Schettler and Matthew Johnston for feedback on an earlier draft of this manuscript.
\bibliographystyle{plain} 
\bibliography{references} 

\end{document}